\crefname{section}{Section}{Sections}
\crefname{subsection}{\S}{\S\S}
\theoremstyle{plain}
\newtheorem{lemma}{Lemma}[section]
\newtheorem{proposition}[lemma]{Proposition}
\newtheorem{corollary}[lemma]{Corollary}
\newtheorem{theorem}[lemma]{Theorem}
\theoremstyle{nonumberplain}
\theoremstyle{plain}
\newtheorem{definition}[lemma]{Definition}
\newtheorem{example}[lemma]{Example}
\newtheorem{remark}[lemma]{Remark}
\crefname{definition}{definition}{definitions}
\crefname{ex}{example}{examples}
\crefname{remark}{remark}{remarks}
\crefname{convention}{convention}{conventions}
\crefname{lemma}{lemma}{lemmas}
\crefname{proposition}{proposition}{propositions}
\crefname{corollary}{corollary}{corollaries}
\crefname{theorem}{theorem}{theorems}
\crefname{assumption}{assumption}{Assumptions}
\crefname{equation}{}{}
\theoremstyle{nonumberplain}
\newtheorem{proof}{Proof}
\newcommand\bC{{\mathbb C}}
\newcommand\bF{{\mathbb F}}
\newcommand\bN{{\mathbb N}}
\newcommand\bT{{\mathbb T}}
\newcommand\cC{{\mathcal C}}
\newcommand\cG{{\mathcal G}}
\newcommand\cH{{\mathcal H}}
\newcommand\cK{{\mathcal K}}
\newcommand\cL{{\mathcal L}}
\newcommand\cO{{\mathcal O}}
\newcommand\cT{{\mathcal T}}
\newcommand{\Tr}{\text{Tr}}
\newcommand{\op}{{\text{op}}}
\newcommand{\id}{\text{id}}
\newcommand{\<}{\left\langle}
\renewcommand{\>}{\right\rangle}
\title{\textbf{Quantum Edge Correspondences and Quantum Cuntz--Krieger Algebras}}
\author{    Michael Brannan 
            \footnote{Department of Pure Mathematics and the Institute for Quantum Computing, University of Waterloo \hfill \url{michael.brannan@uwaterloo.ca}}
            
            \and
            
            Mitch Hamidi
            \footnote{Department of Mathematics, Embry-Riddle Aeronautical University \hfill \url{hamidim@erau.edu}}
            
            \and
            
            Lara Ismert
            \footnote{Department of Mathematics, Embry-Riddle Aeronautical University \hfill \url{ismertl@erau.edu}}

            \and
            
            Brent Nelson
            \footnote{Department of Mathematics, Michigan State University \hfill \url{brent@math.msu.edu}}
            
             \and
            
            Mateusz Wasilewski
            \footnote{Institute of Mathematics, Polish Academy of Sciences \hfill
            \url{mwasilewski@impan.pl}}
        }
\begin{document}

\date{}

\maketitle

\begin{abstract}
Given a quantum graph $\cG=(B,\psi,A)$, we define a C*-correspondence $E_\cG$ over the noncommutative vertex C*-algebra $B$, called the \emph{quantum edge correspondence}. For  a classical graph $\cG$, $E_\cG$ is the usual graph correspondence spanned by the edges of $\cG$. When the quantum adjacency matrix $A\colon B\to B$ is completely positive, we show that $E_\cG$ is faithful if and only if $\ker(A)$ does not contain a central summand of $B$. In this case, we show that the Cuntz--Pimsner algebra $\cO_{E_\cG}$ is isomorphic to a quotient of the quantum Cuntz--Krieger algebra $\cO(\cG)$ defined in \cite{BEVW20}. Moreover, the kernel of the quotient map is shown to be generated by ``localized'' versions of the quantum Cuntz--Krieger relations, and $\cO_{E_\cG}$ is shown to be the universal object associated to these local relations.  We study in detail some concrete examples and make connections with the theory of Exel crossed products.
\end{abstract}


\section*{Introduction}
The notion of a quantum graph goes back to the work of Erdos-Katavolos-Shulman \cite{ErKaSh98} and Weaver \cite{We12}, and was subsequently developed further by Duan-Severini-Winter \cite{DuSeWi13} and Musto-Reutter-Verdon \cite{MuReVe19}. Quantum graphs play an intriguing role in the study of the graph isomorphism game in quantum information via their connections with quantum symmetries of graphs (see \cite{MR4097284} and \cite{BCEHPSWCMP19}). Moreover, based on the use of quantum symmetries, fascinating results on the graph theoretic interpretation of quantum isomorphisms between finite graphs were recently obtained by Man\v{c}inska-Roberson \cite{MaRo19}.
In this paper, we take a finite directed quantum graph to mean a triple $(B,\psi, A)$ consisting of a finite-dimensional C*-algebra $B$, a state $\psi$ on $B$, and a linear map $A$ on $B$ satisfying a quantum Schur idempotent condition. Finite directed quantum graphs generalize classical finite directed graphs (without multiple edges) by encoding a classical graph $\cG$ in the triple $(B,\psi, A)$, where $B = C(V)$ is the C*-algebra of continuous functions on the vertex set $V$ of $\mathcal G$, $\psi$ is integration with respect to the uniform probability on $V$, and $A$ is the adjacency matrix of $\cG$.   

In \cite{BEVW20}, given a finite directed quantum graph $(B,\psi,A)$ Eifler, Voigt, Weber and first author introduced a novel C*-algebra $\bF\cO(B, \psi, A)$, called the \textit{free quantum Cuntz--Krieger algebra}. This generalizes the well-studied Cuntz--Krieger algebra $\cO_A$ arising from classical graphs 
(or rather a free version of it), where the standard generators are replaced by matrix-valued valued partial isometries whose matrix sizes are determined by the quantum graph, and the Cuntz--Krieger relations are expressed using the quantum adjacency matrix of the quantum graph in analogy to the scalar case.
Introduced in \cite{ck1}, Cuntz--Krieger algebras have intimate connections with symbolic dynamics, and have been studied intensively in the framework of graph C*-algebras over the past decades, thus providing a rich supply of interesting examples \cite{Raeburn, T17, ERRS18, ERRS21}. The structure of graph C*-algebras is understood to an impressive level of detail, and many algebraic properties can be interpreted in terms of the combinatorial properties of the underlying graphs. Motivated by this success, the original constructions and results have been generalized in several directions, including higher rank graphs \cite{KuPa00}, Exel-Laca algebras \cite{ExLa99}, and ultragraph algebras \cite{To03}, among others.
Recall that, under mild assumptions, the  Cuntz--Krieger algebra corresponding to a classical graph is isomorphic to the Cuntz--Pimsner algebra associated to the graph's edge correspondence \cite[Example 2, p. 193]{Pimsner}. It is worth mentioning that in the more general setting of quantum graphs, the free quantum Cuntz--Krieger algebras seem to be difficult to compute in general, and their isomorphism classes are only known under very strict assumptions (e.g., when $(B,\psi,A)$ is \emph{complete} and $\psi$ is an $n^2$-form for $n\in \mathbb{N}$; see \cite[Theorem 4.5]{BEVW20}).

In the present paper we consider a natural unital version of $\mathbb{F}\mathcal{O}(B,\psi,A)$, which we denote $\mathcal{O}(B,\psi,A)$, and under the assumption that $A$ is completely positive we show that $\mathcal{O}(B,\psi,A)$ quotients onto the Cuntz--Pimsner algebra associated to a C*-correspondence over $B$ which can be viewed as the quantum analogue of the edge correspondence for a classical graph. This is accomplished by showing that  this Cuntz--Pimsner algebra is the universal C*-algebra associated to ``local'' versions of the quantum Cuntz--Krieger relations introduced in \cite{BEVW20}.

In Section~\ref{sec:quantumedgecorrespondences}, we define the quantum edge correspondence $E_\cG$ for a given quantum graph $\cG = (B,\psi,A)$ whose associated state is a $\delta$-form. This C*-correspondence is generated by a generalized version of the Choi--Jamio\l kowski matrix $\epsilon_\cG$ associated to the quantum adjacency operator $A$, and $E_\cG$ generalizes the usual edge correspondence for a classical graph (see Example~\ref{exmp:commutativecase}). Given the role of the element $\epsilon_\cG$ in generating this analogue of an edge correspondence, we think of it as the quantum analogue of a classical graph's edge matrix. Theorem~\ref{thm:faithful_full_correspondence} identifies conditions on the quantum adjacency matrix which result in faithfulness and fullness of the quantum edge correspondence, and Proposition \ref{prop:recognition} provides a recognition theorem for when a cyclic C*-correspondence is the quantum edge correspondence for a quantum graph. 
  
In Section~\ref{sec:QCK_and_local_relations}, we introduce a natural quotient of the quantum Cuntz--Krieger algebra $\cO(\cG)$ by introducing certain``local" relations  on the generators, and we call this quotient a {\em local quantum Cuntz--Krieger algebra}. Given a quantum graph $\cG$, we show in Theorem \ref{thm:universal_local_quantum_Cuntz--Krieger_algebra} that the Cuntz--Pimsner algebra $\cO_{E_\cG}$ constructed from the quantum edge correspondence is precisely the local quantum Cuntz--Krieger algebra for that same quantum graph. As mentioned previously, the free quantum Cuntz--Krieger algebras (and their unital cousins) seem to be very difficult to describe concretely, except in the most basic cases.  This sentiment is emphasized by the fact that we are unable to find an example of a quantum graph $\cG$ whose local quantum Cuntz--Krieger algebra is a {\it proper} quotient of $\cO(\cG)$.

In Section~\ref{sec:simpler_examples}, we focus on examples, beginning with a very special class of quantum graphs called the {\it complete quantum graphs} $\cG = K(B,\psi)$.  Here we study the associated quantum Cuntz--Krieger algebras and Cuntz--Pimsner algebras, and make some connections to Exel's theory of crossed products of C*-algebras by endomorphisms \cite{Exel03}. 
In the classical setting, any square $\{0,1\}$-matrix $A = [A(x,y)]$ gives rise to the Markov subshift  $(X_A, \sigma)$, which is the topological dynamical system given by the infinite compact path space $X_A = \{x = (x_i) \in \{1, \ldots, n\}^{\mathbb N}\ \big| \ A(x_i,x_{i+1}) = 1 \ \forall i \ge 1  \}$ together with the left shift action $\sigma:X_A \to X_A$ given by $\sigma(x)_i = x_{i+1}$.  From this dynamical system $(X_A, \sigma)$, one can associate an {\it Exel system} $(C(X_A),\alpha,\mathcal{L})$,  where $C(X_A)$ is the unital C*-algebra of continuous functions on $X_A$,  $\alpha: C(X_A) \to C(X_A)$ is the $*$-endomorphism defined by $\alpha(f) = f \circ \sigma$, and $\mathcal{L}:C(X_A) \to C(X_A)$ is a {\it transfer operator} for $\alpha$. In \cite{Exel03}, Exel builds from this data a crossed product C$^\ast$-algebra $C(X_A)\rtimes_{\alpha, \mathcal{L}} \mathbb{N}$, and shows that it is isomorphic to the usual Cuntz--Krieger algebra $\mathcal{O}_A$.  For any complete quantum graph $K = K(B,\psi)$, we associate a natural choice of (non-commutative) Exel system, and show in Proposition \ref{thm:crossed-product-is-a-Cuntz-algebra}, that this crossed product is isomorphic to the Cuntz algebra $\cO_{n}$ on $n = \dim B$ generators.  We also show in Proposition \ref{prop:Cuntz--Pimsner-is-a-Cuntz-algebra} that the Cuntz--Pimsner algebra $\cO_{E_K}$ is isomorphic to $\cO_n$.  These results combined generalize the well known identifications of Cuntz--Krieger algebras, Cuntz--Pimsner algebras, and Exel crossed products associated to complete graphs.

Finally, in the other subsections of Section~\ref{sec:simpler_examples} of the paper, we study trivial (edgeless) quantum graphs and their two natural generalizations: rank-one quantum graphs, and quantum graphs associated to $\ast$-automorphic quantum adjacency matrices.  We are able  associate a natural choice of (non-commutative) Exel system in these cases as well, except for rank-one quantum graphs. Given one of these types of quantum graphs, we show that the Exel crossed product is isomorphic to the Cuntz--Pimsner algebra for that quantum graph's quantum edge correspondence.  See Corollary \ref{cor:triv}, Proposition \ref{prop:triv}, and Corollary \ref{cor:triv2}. These examples also mimic the classical setting, where a graph's associated Cuntz--Krieger algebra, Exel crossed product, and Cuntz--Pimsner algebra are all isomorphic.

Let us end this introduction with a remark: For more general quantum graphs $\cG = (B,\psi, A)$, it is an interesting and natural problem to associate to $\cG$ a quantum analogue of (functions on) the path space $X_A$. The construction of an appropriate noncommutative version of $C(X_A)$ seems to be highly non-trivial, and we plan to investigate this in more detail in a followup work.

\subsection*{Acknowledgments} Michael Brannan was partially supported by NSF Grant DMS-2000331. Brent Nelson was partially supported by NSF Grant DMS-1856683. Lara Ismert was partially supported by the NSF-AWM Mentoring Travel Grant. Mateusz Wasilewski was partially supported by the Research Foundation — Flanders (FWO) through a Postdoctoral Fellowship, by long term structural funding – Methusalem grant of the Flemish
Government – and by the European Research Council Starting Grant 677120 INDEX.

\section{Preliminaries}
\subsection{Quantum graphs}
In this paper, we consider \emph{finite quantum spaces} $(B,\psi)$, consisting of a finite dimensional C*-algebra $B$ and a distinguished faithful state $\psi\colon B\to \bC$ satisfying
    \[
        mm^* = \delta^2\text{id},
    \]
where $m\colon B\otimes B\to B$ is the multiplication map, $m^*$ is its adjoint with respect to the inner product given by $\psi$, and $\delta>0$. States $\psi:B \to \bC$ satisfying the above identity are called \emph{$\delta$-forms}. Since $B$ is finite dimensional, we have
    \[
        B\cong \bigoplus_{a=1}^d M_{N_a}(\bC).
    \]
The restriction of $\psi$ to the $a^{th}$ summand $M_{N_a}(\mathbb{C})$ appearing in the direct sum decomposition of $B$ will be given by a density matrix with respect to the usual trace, denoted $\rho_a$. We may and will assume that each $\rho_a$ is diagonal with the $i^{th}$ diagonal entry being $\psi(e_{ii}^{a})$. The condition that $\psi$ is a $\delta$-form is then equivalent to $\Tr(\rho_a^{-1})=\sum_{i=1}^{N_a} \psi(e_{ii}^{a})^{-1}=\delta^2$ for each $a=1,\ldots, d$.

We utilize the diagonal entries of these density matrices to define \emph{adapted matrix units} for $(B,\psi)$, given by
    \[
        f_{ij}^{(a)}:= \frac{1}{\left[\psi(e_{ii}^{(a)})\psi(e_{jj}^{(a)})\right]^{1/2}} e_{ij}^{(a)} \qquad \qquad 1\leq a\leq d,\ 1\leq i,j\leq N_a,
    \]
and by \cite[Lemma 3.2]{BEVW20} they satisfy
    \begin{equation}\label{Eq:adaptedunits}
        m^*(f_{ij}^{(a)}) = \sum_{k=1}^{N_a} f_{ik}^{(a)} \otimes f_{kj}^{(a)}.
    \end{equation}
\begin{equation}\label{Eq:prodadaptedunits}
    f_{ij}^{(a)}f_{rs}^{(b)} = \delta_{\substack{a=b\\ j=r}} \frac{1}{\psi(e_{jj}^{(a)})} f_{is}^{(a)}. 
\end{equation}

Given a finite quantum space $(B,\psi)$, a linear map $A\colon B\to B$ is said to be a \emph{quantum adjacency matrix} if
    \[
        m(A\otimes A)m^* = \delta^2 A.
    \]
In this case, the triple $(B,\psi,A)$ is called a \emph{quantum graph}. At times, it will be convenient to express $A$ as an actual matrix $[A_{ija}^{rsb}]_{\substack{1 \le a,b \le m \\
1 \le i,j \le N_a \\
1 \le r,s \le N_b}}$ with respect to the   basis of adapted matrix units:    \[
        A(f_{ij}^{(a)}) = \sum_{b=1}^d \sum_{r,s=1}^{N_b} A_{ija}^{rsb} f_{rs}^{(b)}.
    \]
These coefficients can be used to directly check whether a linear map $A\colon B\to B$ is a quantum adjacency matrix (see \cite[Lemma 3.4]{BEVW20}), but this will not be necessary in the present paper.

\subsection{C*-correspondences and Cuntz--Pimsner algebras}\label{subsec:correspondences_and_CP_algebras}

Given a C*-algebra $B$, a \emph{C*-correspondence} over $B$ is a right Hilbert $B$-module $X$ (with right $B$-valued innner product $\langle \cdot, \cdot \rangle_B$) which admits a $*$-homomorphism $\varphi_X\colon B\to \cL(X)$. Here $\cL(X)$ denotes the right $B$-linear adjointable operators on $X$. The $*$-homomorphism $\varphi_X$ induces a left $B$-action that commutes with the right $B$-action:
    \[
        x\cdot \xi:=\varphi_X(x)\xi \qquad \qquad x\in B,\ \xi\in X.
    \]
One says $X$ is \emph{faithful} if $\varphi_X$ is faithful, and \emph{full} if $\overline{\text{span}}\<X,X\>_B=B$. The \emph{compact operators} on $X$, denoted $\cK(X)$, are generated by $\theta_{\xi,\eta}\in \cL(X)$, $\xi,\eta\in X$, where
    \[
        \theta_{\xi,\eta}(\zeta) := \xi\cdot \<\eta,\zeta\>_B \qquad \zeta\in X.
    \]
In this paper, we will exclusively consider finite dimensional C*-correspondences, for which $\cK(X)=\cL(X)$. The following example is particularly relevant for our purposes.

\begin{example}\label{example:correspondence_from_cp_map}
Let $B$ be a finite dimensional C*-algebra and $A\colon B\to B$ a completely positive map. Define a $B$-valued inner product on $B\otimes B$ by
    \[
        \< a\otimes b, c\otimes d\>_B := b^* A(a^* c)d,
    \]
where the positivity follows from the complete positivity of $A$. Then after taking a separation,
    \[
        B\otimes_A B:= B\otimes B/\{\xi\in B\otimes B\colon \<\xi,\xi\>_B=0\}
    \]
defines a C*-correspondence over $B$. (Since everything is finite-dimensional, we do not need to take a completion.) The usual left and right actions of $B$ on $B\otimes B$, 
    \[
        x\cdot (a\otimes b)\cdot y = (xa)\otimes (by),\qquad a,b,x,y\in B
    \]
extend to left and right actions on $B\otimes_A B$.
\end{example}

After \cite{Kat04}, a \emph{representation} of a C*-correspondence $X$ over $B$ on a C*-algebra $D$ is a pair $(\pi,t)$ consisting of a $*$-homomorphism $\pi\colon B\to D$ and a linear map $t\colon X\to D$ satisfying:
    \begin{enumerate}[label=(\roman*)]
        \item $\pi(x)t(\xi)=t(x\cdot \xi)$ for $x\in B$ and $\xi\in X$,
        
        \item $t(\xi)^*t(\eta)=\pi(\<\xi,\eta\>_B)$ for $\xi,\eta\in X$.
    \end{enumerate}
Using (i), one can also show $t(\xi)\pi(x)= t(\xi\cdot x)$ for $x\in B$ and $\xi\in X$. One can also define a $*$-homomorphism $\psi_t\colon \cK(X)\to D$ by $\psi_t(\theta_{\xi,\eta})=t(\xi)t(\eta)^*$. A representation is said to be \emph{covariant} if $\pi(x) = \psi_t(\varphi_X(x))$ for all $x$ in the \emph{Katsura ideal} $J_X$, which is defined by
    \[
        J_X:=\{x\in B\colon \varphi_X(x)\in \cK(X) \text{ and } xy=0 \text{ for all }y\in \ker{\varphi_X}\}.
    \]
Note that $J_X=B$ when $X$ is finite dimensional and faithful, which is the situation we will primarily consider. 

The \emph{Cuntz--Pimsner algebra} for a C*-correspondence $X$ over $B$ is the C*-algebra $\cO_X=C^*(\pi_X(B),t_X(X))$ where $(\pi_X,t_X)$ is the universal covariant representation of $X$. That is, given any covariant representation $(\pi,t)$ of $X$ on a C*-algebra $D$, there exists a $*$-homomorphism $\rho \colon \cO_X\to D$ satisfying $\pi=\rho\circ \pi_X$ and $t=\rho\circ t_X$.

\section{Quantum edge correspondences}\label{sec:quantumedgecorrespondences}

Let $(B,\psi)$ be a finite quantum space.  Since the state $\psi:B \to \bC \subset B$ is completely positive we can consider the C*-correspondence $B\otimes_\psi B$ over $B$ from Example~\ref{example:correspondence_from_cp_map}. Note that for $\xi\in B\otimes B$, $\psi(\<\xi,\xi\>_B)=\|\xi\|^2_{\psi\otimes \psi}$, and so $B\otimes_\psi B=B\otimes B$ as a vector space. That is, we do not need to take a quotient of $B\otimes B$.

\begin{definition}\label{def:edgecorrespondence}
Let $\cG=(B,\psi,A)$ be a directed quantum graph such that $\psi$ is a $\delta$-form. We define the {\it quantum edge indicator} to be the element \[\epsilon_\cG:=\frac{1}{\delta^2}(1\otimes A)m^*(1) \in B\otimes_\psi B.\] The \textit{quantum edge correspondence} of $\cG$ is the C*-correspondence over $B$ defined by
    \[
        E_\cG:=B\cdot \epsilon_\cG \cdot B =\text{span}\{x\cdot \epsilon_\cG\cdot y \colon x,y\in B\} \subset B\otimes_\psi B.
    \]
That is, $E_\cG$ is the C*-subcorrespondence of $B\otimes_\psi B$ generated by  the quantum edge indicator $\epsilon_\cG$.
\end{definition}

\begin{example}\label{exmp:commutativecase}
    In the case of a classical directed graph $\mathcal{G}=( \bC(V), \frac{1}{|V|}, A)$, one has
    \[
        \epsilon_\cG=\frac{1}{\delta^2}(1\otimes A)m^*(1) = \frac{1}{|V|}(1\otimes A)\sum_{v\in V} |V| p_v\otimes p_v = \sum_{w\to v} p_v\otimes p_w.
    \]
Hence $\epsilon_\cG\in \bC(V\times V)$ is the indicator function for the set $\{(v,w)\colon (w,v) \text{ is an edge}\}$, and the quantum edge correspondence $E_\cG$ is the space of functions supported on this set.
\end{example}

\subsection{Properties of quantum edge indicators and correspondences}

Before studying the edge correspondence, we note some important properties of the quantum edge indicator $\epsilon_\cG$ which further justify our terminology. 

\begin{proposition}\label{prop:properties_of_edge_generator}
Let $\mathcal{G}=(B,\psi,A)$ be a directed quantum graph with $\delta$-form $\psi$, and let $\epsilon_\cG:= \frac{1}{\delta^2}(1\otimes A)m^*(1)$ be the quantum edge indicator. Then:
    \begin{enumerate}[label=(\arabic*)]
        \item $A(x) = \delta^2 (\psi\otimes 1)(x\cdot \epsilon_\cG)$ for all $x\in B$.
    
        \item $\epsilon_\cG\# \epsilon_\cG=\epsilon_\cG$ where $(a\otimes b)\#(c\otimes d)=(ac)\otimes (db)$ for $a,b,c,d\in B$.
        
        \item $A$ is completely positive if and only if $(\sigma_{i/2}^\psi\otimes 1)(\epsilon_\cG)$ is self-adjoint, where $(\sigma_t^\psi)_{t \in \mathbb R}$ denotes the modular automorphism group of $\psi$.
    \end{enumerate}
\end{proposition}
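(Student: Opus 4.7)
For (1), I plan to reduce the assertion to two identities which are straightforward to verify on adapted matrix units: the Frobenius-type identity $(x\otimes 1)m^*(1)=m^*(x)$ for $x\in B$ (which, using the expansion $m^*(1)=\sum_{a,i,k}\psi(e_{ii}^{(a)})f_{ik}^{(a)}\otimes f_{ki}^{(a)}$, reduces to the product rule \eqref{Eq:prodadaptedunits}), and the counit-type identity $(\psi\otimes 1)\circ m^*=\mathrm{id}_B$ (immediate from \eqref{Eq:adaptedunits} and $\psi(f_{ij}^{(a)})=\delta_{ij}$). Starting from $x\cdot\epsilon_\cG=\frac{1}{\delta^2}(x\otimes 1)(1\otimes A)m^*(1)$ and commuting $x\otimes 1$ past $1\otimes A$, the Frobenius identity yields $x\cdot\epsilon_\cG=\frac{1}{\delta^2}(1\otimes A)m^*(x)$, and then applying $\psi\otimes 1$ together with the counit identity and linearity of $A$ finishes. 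For (2), my plan is a direct expansion of $\epsilon_\cG\#\epsilon_\cG$ in adapted units. Multiplying the left legs via \eqref{Eq:prodadaptedunits} collapses the double sum to
\[
\epsilon_\cG\#\epsilon_\cG=\tfrac{1}{\delta^4}\sum_{a,i,k,k'}\psi(e_{ii}^{(a)})\,f_{ik'}^{(a)}\otimes A(f_{k'k}^{(a)})A(f_{ki}^{(a)}).
\]
The inner $k$-sum is recognized via \eqref{Eq:adaptedunits} as $m(A\otimes A)m^*(f_{k'i}^{(a)})$, which by the quantum Schur idempotent condition equals $\delta^2 A(f_{k'i}^{(a)})$; regrouping constants recovers $\epsilon_\cG$.

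For (3), the first step is to compute the twisted edge indicator in adapted matrix units. Using the modular formula $\sigma_{i/2}^\psi(f_{ik}^{(a)})=(\psi(e_{kk}^{(a)})/\psi(e_{ii}^{(a)}))^{1/2}f_{ik}^{(a)}$, the scalar weights in the definition of $\epsilon_\cG$ absorb cleanly and
\[
C:=(\sigma_{i/2}^\psi\otimes 1)(\epsilon_\cG)=\tfrac{1}{\delta^2}\sum_{a,i,k}e_{ik}^{(a)}\otimes A(f_{ki}^{(a)}).
\]
Taking the adjoint of $C$, swapping indices $i\leftrightarrow k$, and using $(f_{ki}^{(a)})^*=f_{ik}^{(a)}$ shows that $C=C^*$ if and only if $A(x^*)=A(x)^*$ for every $x\in B$, i.e., if and only if $A$ is $*$-preserving. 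Since any CP map is automatically $*$-preserving, the forward implication of (3) is then immediate.

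For the converse in (3), the plan is to exploit that the product $\#$ on $B\otimes B$ together with the coordinatewise $*$-involution turns $B\otimes B$ into the C*-algebra $B\otimes B^{\mathrm{op}}$, and that $\sigma_{i/2}^\psi\otimes 1$ is an algebra homomorphism with respect to $\#$ (a one-line check, since $\sigma_{i/2}^\psi$ is an algebra homomorphism on $B$). Consequently, by (2), $C$ is $\#$-idempotent, so the hypothesis $C=C^*$ promotes $C$ to a projection in $B\otimes B^{\mathrm{op}}$, hence to a positive element of that C*-algebra. Taking a factorization $C=V^*\# V$ with $V=\sum_\alpha V'_\alpha\otimes V''_\alpha\in B\otimes B$ and substituting into a modular-twisted variant of (1) — namely $A(x)=\delta^2(\psi\otimes 1)(\sigma_{i/2}^\psi(x)\cdot C)$, which follows from $\psi\circ\sigma_{-i/2}^\psi=\psi$ — presents $A$ in a Kraus-like form in the $V'_\alpha,V''_\alpha$. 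The main obstacle is then to verify that this presentation is manifestly CP: I expect this to come down to recognizing, for any $X=[x_{ij}]\in M_n(B)$, that the $(i,j)$-entries of $(A\otimes\mathrm{id}_n)(X^*X)$ decompose as Gram-matrix combinations of positive tensor elements with respect to the $\psi$-GNS inner product, making the result manifestly positive. Once this is in hand, the converse — and hence the full equivalence of (3) — follows.
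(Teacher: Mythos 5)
Your proposal is correct, and it takes a genuinely different route from the paper's proof, so let me compare. For (1), the paper argues by duality, pairing $\delta^2(\psi\otimes 1)(x\cdot\epsilon_\cG)$ against an arbitrary $y\in B$ and moving $A^*$ and $m^*$ across the $\psi$-inner products, whereas you use left $B$-linearity of $m^*$ (your Frobenius identity) together with the counit identity $(\psi\otimes 1)m^*=\id$; both are short and valid. For (2), the paper shows that $\frac{1}{\delta^2}(1\otimes m)(1\otimes A\otimes 1)(m^*\otimes 1)$ is a bimodular idempotent and evaluates it at $1\otimes 1$, explicitly leaving the direct adapted-matrix-unit check to ``the skeptical reader''; your collapse of the double sum via \eqref{Eq:prodadaptedunits} and the identification of the inner $k$-sum with $m(A\otimes A)m^*(f_{k'i}^{(a)})=\delta^2A(f_{k'i}^{(a)})$ is exactly that verification, carried out correctly. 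The real divergence is in (3): the paper never isolates your intermediate observation that self-adjointness of $C=(\sigma^\psi_{i/2}\otimes 1)(\epsilon_\cG)$ is literally $*$-preservation of $A$; instead it passes to $\eta_\cG\in B\otimes B^\op$ and proves ``self-adjoint $\Leftrightarrow$ positive $\Leftrightarrow$ $A$ CP'' in one sweep using the self-dual cone $\overline{\{\Delta_\psi^{1/4}x\colon x\geq 0\}}$ and the fact that the elements $\sum_{i,j}x_{ij}^*\otimes y_{ij}^\circ$ exhaust the positive cone of $B\otimes B^\op$. Your forward implication (CP $\Rightarrow$ $*$-preserving $\Rightarrow$ $C=C^*$, via $C=\frac{1}{\delta^2}\sum_{a,i,k}e_{ik}^{(a)}\otimes A(f_{ki}^{(a)})$) is more elementary than the paper's; your converse shares the paper's first move ($\#$-idempotent plus self-adjoint gives a projection, hence a positive element of $B\otimes B^\op$) but then replaces the cone computation by a C*-factorization $C=V^*\# V$. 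The step you leave as a sketch does close: writing $V=\sum_\alpha V_\alpha'\otimes V_\alpha''$ and using $\psi(\sigma^\psi_{i/2}(x)w)=\Tr(\rho^{1/2}x\rho^{1/2}w)$ with $\rho=\oplus_a\rho_a$ (only multiplicativity of $\sigma^\psi_{i/2}$ and $\psi$-invariance are needed for your twisted form of (1)), one gets
\[
A(x)=\delta^2\sum_{\alpha,\beta}\Tr\bigl(\rho^{1/2}x\rho^{1/2}(V_\alpha')^*V_\beta'\bigr)\,V_\beta''(V_\alpha'')^*,
\]
and for $b_i,c_i\in B$ this yields $\sum_{i,j}c_i^*A(b_i^*b_j)c_j=\delta^2\sum_{u,v}\Tr(P_u^*P_v)\,Q_u^*Q_v$ with $u=(i,\beta)$, $P_{(i,\beta)}=b_i\rho^{1/2}(V_\beta')^*$ and $Q_{(i,\beta)}=(V_\beta'')^*c_i$; since $[\Tr(P_u^*P_v)]_{u,v}$ is a Gram matrix, the sum is positive and $A$ is CP. What the paper's route buys is a uniform modular-theoretic argument that needs no choice of factorization; what yours buys is the sharper intermediate fact that, for a Schur idempotent, self-adjointness of the twisted edge indicator is equivalent to $*$-preservation of $A$, hence ``$*$-preserving $\Rightarrow$ completely positive'' for quantum adjacency matrices, and your two directions are logically independent, so there is no circularity.
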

\begin{proof}
\begin{enumerate}
    \item[\textbf{(1):}] For $x,y\in B$, we have
    \begin{align*}
        \< \delta^2 (\psi\otimes 1)(x\cdot \epsilon_\cG), y\>_\psi &= \delta^2 \< \epsilon_\cG, x^*\otimes y\>_{\psi\otimes \psi}\\
            &= \< m^*(1), x^* \otimes A^*(y)\>_{\psi\otimes \psi}\\
            &= \< 1, x^* A^*(y)\>_\psi\\
            &= \< A(x), y\>_\psi.
    \end{align*}
Hence $\delta^2 (\psi\otimes 1)(x\cdot \epsilon_\cG)=A(x)$ as claimed.
    
    \item[\textbf{(2):}] Observe that $\epsilon_\cG$ is the image of $1\otimes 1$ under the following map on $B\otimes B$:
        \[
            \frac{1}{\delta^2}(1\otimes m)(1\otimes A\otimes 1)(m^*\otimes 1).
        \]
    We will show that this map is a left and right $B$-linear and idempotent.  (In fact, $E_\cG$ is precisely the image of $B\otimes B$ under this map.) 
    Bimodularity comes from the fact that both $m$ and $m^*$ are left and right $B$-linear. Using the associativity of the multiplication operation (i.e. $m(m\otimes 1) = m(1\otimes m)$, hence also $(m^*\otimes 1)m^* = (1\otimes m^*)m^*$) and that $A$ is a quantum adjacency matrix (i.e. $\frac{1}{\delta^2} m(A\otimes A)m^*=A$), we will show that this map is idempotent. Its square is equal to 
    \[
    \frac{1}{\delta^4}     \left(1\otimes (m(1\otimes m)\right)  \left(1\otimes A\otimes A \otimes 1\right)\left((m^*\otimes 1)m^* \otimes 1\right).
    \] 
    Using associativity, we get
    \[
    \frac{1}{\delta^4}(1\otimes m)(1\otimes m \otimes 1)(1\otimes A \otimes A \otimes 1)(1\otimes m^* \otimes 1)(m^* \otimes 1).
    \]
    In the middle, we recognize the expression $1\otimes (m(A\otimes A)m^*)\otimes 1$, which is equal to $\delta^2 1\otimes A \otimes 1$. In the end we obtain
    \[
    \frac{1}{\delta^2}(1\otimes m)(1\otimes A \otimes 1)(m^*\otimes 1),
    \]
    which verifies the map is idempotent. Consequently,
        \begin{align*}
            \epsilon_\cG\# \epsilon_\cG &=  \epsilon_\cG\# \frac{1}{\delta^2} (1\otimes m)(1\otimes A\otimes 1)(m^*\otimes 1)(1\otimes 1)\\
                &= \frac{1}{\delta^2}(1\otimes m)(1\otimes A\otimes 1)(m^*\otimes 1)(\epsilon_\cG)\\
                &= \left[\frac{1}{\delta^2}(1\otimes m)(1\otimes A\otimes 1)(m^*\otimes 1)\right]^2(1\otimes 1)\\
                &= \frac{1}{\delta^2}(1\otimes m)(1\otimes A\otimes 1)(m^*\otimes 1)(1\otimes 1) = \epsilon_\cG.
        \end{align*}
    (This identity can also be checked directly using the adapted matrix units of $(B,\psi)$, but this arduous task is left to the skeptical reader.)
    
    \item[\textbf{(3):}] Let $\eta_\cG\in B\otimes B^\text{op}$ be the image of $\epsilon_\cG$ under the map $a\otimes b\mapsto a\otimes b^\circ$. Then $(\sigma_{i/2}^\psi\otimes 1)(\epsilon_\cG)$ is self-adjoint if and only if $(\sigma_{i/2}^\psi\otimes 1)(\eta_\cG)$ is self-adjoint. Observe that $(\sigma_{i/2}^\psi\otimes 1)(\eta_\cG)$ is an idempotent by part (2), and so it is self-adjoint if and only if it is a projection, and hence if and only if it is positive. It therefore suffices to show $A$ is completely positive if and only if $(\sigma_{i/2}^\psi\otimes 1)(\eta_\cG)$ is positive. 
    
    Recall that if $(M,\varphi)$ is a von Neumann algebra equipped with a faithful normal linear functional, then $\overline{\{\Delta_\varphi^{1/4} x\colon x\in M_+\}}$ is a self-dual cone in $L^2(M,\varphi)$, where $\Delta_\varphi$ is the modular operator with respect to $\varphi$. Consequently $A$ is completely positive if and only if for all positive elements $X=(x_{ij}), Y=(y_{ij})\in M_n(B)^+$ one has
        \[
            \< \Delta_{\psi\otimes \text{Tr}}^{1/4} (A\otimes I_n)(X), \Delta_{\psi\otimes \text{Tr}}^{1/4}Y\>_{\psi\otimes \text{Tr}}= \sum_{i,j=1}^n \< \Delta_\psi^{1/4} A(x_{ij}), \Delta_\psi^{1/4} y_{ij}\>_\psi \geq 0.
        \]
    Using part (1), we compute
        \[
            \< \Delta_\psi^{1/4} A(x_{ij}), \Delta_\psi^{1/4} y_{ij}\>_\psi = \< \delta^2(\psi\otimes 1)(x_{ij}\cdot \epsilon_\cG), \Delta_\psi^{1/2} y_{ij}\>_\psi = \delta^2 \< \epsilon_\cG, x_{ij}^*\otimes (\Delta_\psi^{1/2} y_{ij})\>_{\psi\otimes \psi}.
        \]    
Now, $L^2(B\otimes B,\psi\otimes\psi)\ni a\otimes b\mapsto a\otimes [\Delta_\psi^{-1/2} b]^\circ \in L^2(B\otimes B^{\text{op}},\psi\otimes\psi^{\text{op}})$ is an isometry, so we can continue the above computation with
        \begin{align*}
             \< \Delta_\psi^{1/4} A(x_{ij}), \Delta_\psi^{1/4} y_{ij}\>_\psi &= \delta^2 \< (1\otimes \Delta_\psi^{-1/2}) \eta_\cG, x_{ij}^*\otimes y_{ij}^\circ\>_{\psi\otimes \psi^{\text{op}}}\\
             &= \delta^2\< (\Delta_\psi^{1/4}\otimes \Delta_\psi^{-1/4})( \Delta_\psi^{-1/2}\otimes 1)\eta_\cG, (\Delta_\psi^{1/4}\otimes \Delta_\psi^{-1/4})( x_{ij}^*\otimes y_{ij}^\circ) \>_{\psi\otimes \psi^\text{op}}\\
            &= \delta^2 \< \Delta_{\psi\otimes \psi^{\text{op}}}^{1/4}\left[ (\sigma_{i/2}^{\psi}\otimes 1)(\eta_\cG)\right], \Delta_{\psi\otimes \psi^{\text{op}}}^{1/4}(x_{ij}^*\otimes y_{ij}^\circ)\>_{\psi\otimes \psi^{\text{op}}}.
        \end{align*}
Thus
    \begin{align}\label{eqn:complete_positivity_equation}
        \sum_{i,j=1}^n \< \Delta_\psi^{1/4} A(x_{ij}), \Delta_\psi^{1/4} y_{ij}\>_\psi & \nonumber\\
            = \delta^2 \sum_{i,j=1}^n &\< \Delta_{\psi\otimes \psi^{\text{op}}}^{1/4}\left[ (\sigma_{i/2}^{\psi}\otimes 1)(\eta_\cG)\right], \Delta_{\psi\otimes \psi^{\text{op}}}^{1/4}(x_{ij}^*\otimes y_{ij}^\circ)\>_{\psi\otimes \psi^{\text{op}}}.
    \end{align}
Suppose $X=W^*W$ and $Y=Z^*Z$ for $W=(w_{ij}), Z=(z_{ij})\in M_n(B)$. Then
    \[
        \sum_{i,j=1}^n x_{ij}^*\otimes y_{ij}^\circ = \sum_{i,j,k,\ell=1}^n w_{kj}^*w_{ki}\otimes (z_{\ell i}^* z_{\ell j})^\circ = \sum_{k,\ell=1}^n (\sum_{j=1}^n w_{kj}\otimes (z_{\ell j}^*)^\circ)^* (\sum_{i=1}^n w_{ki}\otimes (z_{\ell i}^*)^\circ)
    \]
is positive. Also note that every positive element of $B\otimes B^\text{op}$ can be presented this way. Indeed,
    \[
        (\sum_{d=1}^m w_d\otimes z_d^\circ)^*(\sum_{d=1}^m w_d\otimes z_d^\circ) = \sum_{i,j=1}^n x_{ij}^*\otimes y_{ij}^\circ
    \] 
if $X=W^*W, Y=Z^*Z\in M_m(B)$ where $(W)_{ij} = \delta_{i=1} w_j$ and $(Z)_{ij}=\delta_{i=1} z_i^*$. It follows from (\ref{eqn:complete_positivity_equation}) and these observations that $A$ is completely positive if and only if $(\sigma_{i/2}^{\psi}\otimes 1)(\eta_\cG)$ is positive.
\end{enumerate}
\end{proof}

\begin{remark}
At this point it is worth remarking on the connection to Weaver's notion of a quantum graph \cite{We12, We21}. Recall that there is an algebra isomorphism 
    \begin{align*}
        \pi:B \otimes B^{\op} &\to {}_{B'}CB_{B'}(B(L^2(B,\psi)))\\
        (a\otimes b^\circ)&\mapsto (T\mapsto aTb) 
    \end{align*} 
    where ${}_{B'}CB_{B'}(B(L^2(B,\psi)))$ denotes the space of completely bounded $B'$-$B'$-bimodule maps on $B(L^2(B,\psi))$.   When a quantum adjacency matrix $A$ is completely positive, the projection $p = (\sigma_{i/2}^\psi\otimes 1)(\eta_\cG) \in B\otimes B^\op$ induces a  $B'$-$B'$-bimodule projection $\pi(p)$ on $B(L^2(B, \psi))$ whose range $S \subseteq B(L^2(B,\psi))$ is a $B'$-$B'$-bimodule. Such bimodules $S$ are exactly what Weaver refers to as (directed, non-reflexive) quantum graphs on $B \subseteq B(L^2(B,\psi))$.
\end{remark}

\begin{theorem}\label{thm:cp_Adjacency_matrices_are_inner_products}
Let $\cG=(B,\psi, A)$ be a directed quantum graph such that $\psi$ is a $\delta$-form and $A$ is completely positive. Then for $x,y\in B$ one has
    \[
        \< x \cdot \epsilon_\cG, y\cdot \epsilon_\cG\>_B = \frac{1}{\delta^2}A(x^*y),
    \]
where $\epsilon_\cG = \frac{1}{\delta^2} (1\otimes A)m^*(1)$ is the quantum edge indicator. In particular, $A(x)=\delta^2\<\epsilon_\cG, x\cdot \epsilon_\cG\>_B$ for all $x\in B$.
\end{theorem}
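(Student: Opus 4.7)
The plan is to unfold the $B$-valued inner product $\<x\cdot\epsilon_\cG,y\cdot\epsilon_\cG\>_B$ as $(\psi\otimes 1)$ applied to a product in $B\otimes B$, rewrite it using Frobenius/bimodule properties of $m^*$, and reduce the resulting expression to $m(A\otimes A)m^*(x^*y)$; the quantum adjacency condition then finishes the computation.

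First, bimodularity of $m^*$ gives $x\cdot\epsilon_\cG=\frac{1}{\delta^2}(1\otimes A)m^*(x)$, and the right inner product on $B\otimes_\psi B$ can be written as $\<\xi,\eta\>_B=(\psi\otimes 1)(\xi^*\eta)$, using the standard $*$-algebra structure on $B\otimes B$. Since $A$ is CP (hence $*$-preserving), the adjoint commutes with $1\otimes A$. Writing $m^*(x)=\sum_\alpha u_\alpha\otimes v_\alpha$ and $m^*(y)=\sum_\beta u'_\beta\otimes v'_\beta$ in Sweedler-like notation, I obtain
\[
\<x\cdot\epsilon_\cG,\,y\cdot\epsilon_\cG\>_B=\frac{1}{\delta^4}\sum_{\alpha,\beta}\psi(u_\alpha^* u'_\beta)\,A(v_\alpha^*)A(v'_\beta).
\]

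Next, I will use the ``counit'' identity $(\psi\otimes 1)\circ m^*=\id$, which follows abstractly by taking adjoints of the unit relation $m(1\otimes z)=z$ together with $\psi^*(1)=1$, or equivalently is immediate from $\psi(f_{k\ell}^{(a)})=\delta_{k\ell}$. Combined with the left-bimodule identity $(u^*\otimes 1)m^*(y)=m^*(u^*y)$, this yields $\sum_\beta\psi(u_\alpha^* u'_\beta)A(v'_\beta)=A(u_\alpha^*y)$, so the displayed right-hand side simplifies to $\frac{1}{\delta^4}m(A\otimes A)\bigl(\sum_\alpha v_\alpha^*\otimes u_\alpha^*y\bigr)$. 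A short computation on adapted matrix units gives the twist identity $m^*(x^*)=\tau(m^*(x)^*)$, where $\tau\colon a\otimes b\mapsto b\otimes a$ is the tensor flip, i.e., $\sum_\alpha v_\alpha^*\otimes u_\alpha^*=m^*(x^*)$; together with the right-bimodule property $m^*(x^*)(1\otimes y)=m^*(x^*y)$, this identifies the bracketed expression with $m^*(x^*y)$. Invoking the quantum adjacency condition $m(A\otimes A)m^*=\delta^2 A$ then gives
\[
\<x\cdot\epsilon_\cG,\,y\cdot\epsilon_\cG\>_B=\frac{1}{\delta^4}m(A\otimes A)m^*(x^*y)=\frac{1}{\delta^2}A(x^*y),
\]
and the ``in particular'' statement follows by setting $x=1$, since then $x\cdot\epsilon_\cG=\epsilon_\cG$.

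The main obstacle is careful bookkeeping around the $*$-operation: the two identities to set up in advance are that $A$ commutes with $*$ (from CP), which lets the adjoint pass through $1\otimes A$, and the twist identity $m^*(x^*)=\tau(m^*(x)^*)$, which reflects the fact that $m$ is not naively $*$-preserving (instead $m(\xi)^*=m(\tau(\xi^*))$). Once these are in hand, the Frobenius-type identities for $m^*$ and the quantum adjacency relation collapse the computation cleanly.
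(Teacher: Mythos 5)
Your proof is correct, but it follows a genuinely different route from the paper. The paper expands $\epsilon_\cG=\sum_\alpha p_\alpha\otimes q_\alpha$, uses the KMS condition to move $p_\alpha^*$ past $x^*y p_\beta$, and then invokes the three parts of Proposition~\ref{prop:properties_of_edge_generator}: complete positivity enters through the self-adjointness of $(\sigma_{i/2}^\psi\otimes 1)(\epsilon_\cG)$ (to replace $(\sigma_{-i}^\psi\otimes 1)(\epsilon_\cG^*)$ by $\epsilon_\cG$), followed by the idempotency $\epsilon_\cG\#\epsilon_\cG=\epsilon_\cG$ and the identity $A(x)=\delta^2(\psi\otimes 1)(x\cdot\epsilon_\cG)$. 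You instead bypass the modular machinery and Proposition~\ref{prop:properties_of_edge_generator} entirely, rewriting $x\cdot\epsilon_\cG=\frac{1}{\delta^2}(1\otimes A)m^*(x)$ and reducing everything to Frobenius-type identities for $m^*$ (the counit identity $(\psi\otimes 1)m^*=\id$, bimodularity, and the flip identity $m^*(x^*)=\tau(m^*(x)^*)$, all checkable on adapted matrix units) together with the Schur-idempotency $m(A\otimes A)m^*=\delta^2 A$; complete positivity is used only through the fact that $A$ is $*$-preserving. I verified each of your auxiliary identities (in particular $\psi(f_{k\ell}^{(a)})=\delta_{k\ell}$ gives the counit identity, and the antilinearity argument makes the matrix-unit check of the twist identity sufficient), and the final chain does yield $\frac{1}{\delta^4}m(A\otimes A)m^*(x^*y)=\frac{1}{\delta^2}A(x^*y)$. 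What each approach buys: yours is more elementary and self-contained (no Tomita--Takesaki theory) and in fact establishes the formula under the weaker hypothesis that $A$ is merely $*$-preserving; the paper's computation is shorter given Proposition~\ref{prop:properties_of_edge_generator}, whose structural facts about $\epsilon_\cG$ (notably $\#$-idempotency) are reused elsewhere, e.g.\ in Theorem~\ref{thm:compact_operators}.
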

\begin{proof}
Write $\epsilon_\cG = \sum_\alpha p_\alpha\otimes q_\alpha$ for $p_\alpha,q_\alpha\in B$. Then
    \begin{align*}
         \< x \cdot \epsilon_\cG, y \cdot \epsilon_\cG\>_B &= \sum_{\alpha,\beta} \< xp_\alpha\otimes q_\alpha, yp_\beta\otimes q_\beta\>_B\\
            &= \sum_{\alpha, \beta} \psi( p_\alpha^* x^* y q_\beta) q_\alpha^* q_\beta\\
            &=  \sum_{\alpha, \beta} \psi(x^*y p_\beta \sigma_{-i}^\psi(p_\alpha^*)) q_\alpha^* q_\beta \\
            &=  (\psi\otimes 1)[ x^*y\cdot \epsilon_\cG \# (\sigma_{-i}^\psi\otimes 1)(\epsilon_\cG^*)]\\
            &=\ (\psi\otimes 1)( x^*y\cdot \epsilon_\cG \# \epsilon_\cG)\\
            &= (\psi\otimes 1)( x^*y\cdot \epsilon_\cG) =  \frac{1}{\delta^2} A(x^*y),
    \end{align*}
where the last three equalities follow from the three parts of  Proposition~\ref{prop:properties_of_edge_generator} (in reverse order).
\end{proof} 

Recalling the definition of $B\otimes_A B$ from Example~\ref{example:correspondence_from_cp_map}, we obtain the following corollary:

\begin{corollary}\label{cor:quantum_graph_correspondence_as_cp_correspondence}
Let $\cG=(B,\psi, A)$ be a directed quantum graph such that $\psi$ is a $\delta$-form and $A$ is completely positive. Then $E_\cG\cong B\otimes_A B$ as C*-correspondences over $B$ via the map $x\cdot \epsilon_\cG\cdot y\mapsto \frac{1}{\delta}(x\otimes y)$.
\end{corollary}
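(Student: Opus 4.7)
The cleanest route is to construct and analyze the inverse map, i.e.\ the natural surjection $\Psi\colon B\otimes B\to E_\cG$ defined on elementary tensors by $\Psi(x\otimes y) = \delta\,x\cdot\epsilon_\cG\cdot y$, and show that it descends through $B\otimes_A B$ to give a bimodule isometric bijection. Surjectivity of $\Psi$ onto $E_\cG$ is immediate from the definition of $E_\cG$ as $\mathrm{span}\{x\cdot\epsilon_\cG\cdot y\}$, so the content of the proof is well-definedness (after quotienting by the inner-product kernel) and preservation of the bimodule structure and the $B$-valued inner product.

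The central step is the inner product computation. For $x,y,x',y'\in B$, using right $B$-linearity of $\langle\cdot,\cdot\rangle_B$ together with \cref{thm:cp_Adjacency_matrices_are_inner_products}, I would compute
\[
\langle \Psi(x\otimes y),\Psi(x'\otimes y')\rangle_B
=\delta^2\, y^*\langle x\cdot\epsilon_\cG,x'\cdot\epsilon_\cG\rangle_B\, y'
= y^*A(x^*x')y',
\]
which is precisely $\langle x\otimes y,x'\otimes y'\rangle_{B\otimes_A B}$. Extending linearly, $\Psi$ preserves inner products between $B\otimes B$ (with the inner product from Example~\ref{example:correspondence_from_cp_map}) and $E_\cG$ (with the inner product inherited from $B\otimes_\psi B$). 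Hence $\Psi$ vanishes exactly on the null space of $\langle\cdot,\cdot\rangle_B$ and therefore descends to a well-defined isometric $\bar\Psi\colon B\otimes_A B \to E_\cG$; an isometry of right Hilbert $B$-modules is automatically injective, and combined with surjectivity this gives a bijection.

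It remains to verify that $\bar\Psi$ is a $B$-bimodule map, which is a straightforward check on elementary tensors: for $z,w\in B$, $\bar\Psi(z\cdot(x\otimes y)\cdot w) = \delta\,(zx)\cdot\epsilon_\cG\cdot(yw) = z\cdot\bar\Psi(x\otimes y)\cdot w$. The inverse of $\bar\Psi$ is then precisely the map $x\cdot\epsilon_\cG\cdot y\mapsto \tfrac{1}{\delta}(x\otimes y)$ of the statement. There is no real obstacle here, since the nontrivial analytic content has already been absorbed into \cref{thm:cp_Adjacency_matrices_are_inner_products}; the only subtlety worth flagging is that one must take care not to define the inverse map directly on $E_\cG$ (where different formal expressions $\sum x_i\cdot\epsilon_\cG\cdot y_i$ can represent the same element), which is why the proof proceeds by defining $\Psi$ on $B\otimes B$ first and appealing to isometry to obtain well-definedness on the quotient $B\otimes_A B$.
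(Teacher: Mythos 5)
Your proposal is correct and matches the paper's (implicit) argument: the corollary is deduced directly from \cref{thm:cp_Adjacency_matrices_are_inner_products}, which shows the inner products on $E_\cG$ and $B\otimes_A B$ agree up to the factor $\delta^2$, with surjectivity and bimodularity being immediate. Your extra care in defining the map on $B\otimes B$ first and passing to the quotient to handle well-definedness is a sensible way of spelling out what the paper leaves tacit, not a different route.
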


The above corollary is somewhat surprising in that a priori $B\otimes_A B$ appears to be completely independent of $\psi$. But of course the dependence is hidden in the fact that $A$ is a quantum adjacency matrix for $(B,\psi)$. We also remark that while $B\otimes_A B$ is more easily defined, $E_\cG$ has the advantage of not requiring us to quotient by any null spaces.

Given a finite quantum space $(B,\psi)$ such that $\psi$ is a $\delta$-form, Theorem~\ref{thm:cp_Adjacency_matrices_are_inner_products} tells us every completely positive adjacency matrix is of the form $A(x)=\delta^2\<\xi, x\cdot \xi\>_B$ for some $\xi\in B\otimes_\psi B$. Thus it is natural to ask for which elements $\xi\in B\otimes_\psi B$ is the map $B\ni x\mapsto \delta^2\< \xi,x\cdot \xi\>_B$ a (necessarily completely positive) quantum adjacency matrix? Suppose $\xi\#\xi=\xi$ and $(\sigma_{i/2}^\psi\otimes 1)(\xi)$ is self-adjoint, and define $A_\xi(x):= \delta^2 \<\xi,x\cdot \xi\>_B$, then the same computation as in the above theorem implies
    \[
        A_\xi(x)=\delta^2\< \xi, x\cdot \xi\>_B = \delta^2(\psi\otimes 1)(x\cdot \xi).
    \]
Consequently, for each $1\leq a \leq d$ and $1\leq i,j\leq N_a$
    \begin{align*}
        \frac{1}{\delta^2} m(A_\xi\otimes A_\xi)m^*(f_{ij}^{(a)}) &= \frac{1}{\delta^2}\sum_{k} A_\xi( f_{ik}^{(a)}) A_\xi(f_{kj}^{(a)})\\
        &= \delta^2 \sum_k (\psi\otimes 1)(f_{ik}^{(a)}\cdot \xi) (\psi\otimes 1)(f_{kj}^{(a)}\cdot \xi).
    \end{align*}
Writing $\xi= \sum_\alpha p_\alpha\otimes q_\alpha$, we can continue the above with
    \begin{align*}
        \frac{1}{\delta^2} m(A_\xi\otimes A_\xi)m^*(f_{ij}^{(a)}) &= \delta^2 \sum_{k,\alpha,\beta} \<f_{ki}^{(a)}, p_\alpha\>_\psi\<f_{jk}^{(a)}, p_\beta\>_\psi q_\alpha q_\beta\\
        &=\delta^2 \sum_{k,\alpha,\beta} \<f_{ki}^{(a)}, p_\alpha\>_\psi\<f_{ik}^{(a)}, \psi(e_{ii}^{(a)}) f_{ij}^{(a)} p_\beta\>_\psi q_\alpha q_\beta\\
        &= \delta^2 \sum_{k,\alpha,\beta} \< \<\psi(e_{ii}^{(a)}) f_{ij}^{(a)} p_\beta, f_{ik}^{(a)}\>_\psi f_{ki}^{(a)}, p_\alpha\>_\psi  q_\alpha q_\beta\\
        &= \delta^2 \sum_{k,\alpha,\beta} \< \< \Delta_\psi f_{ki}^{(a)}, \psi(e_{ii}^{(a)}) p_\beta^* f_{ji}^{(a)}\>_\psi f_{ki}^{(a)}, p_\alpha\>_\psi  q_\alpha q_\beta\\
        &= \delta^2 \sum_{k,\alpha,\beta} \psi(e_{kk}^{(a)}) \< \< f_{ki}^{(a)}, p_\beta^*f_{ji}^{(a)}\>_\psi f_{ki}^{(a)}, p_\alpha\>_\psi  q_\alpha q_\beta.
    \end{align*}
Now, using the fact that $\{ \psi(e_{rr}^{(b)})^{1/2}f_{rs}^{(b)}\colon 1\leq b \leq d,\ 1\leq r,s\leq N_b \}$ is an orthonormal basis for $L^2(B)$, we then have
    \begin{align*}
       \frac{1}{\delta^2} m(A_\xi\otimes A_\xi)m^*(f_{ij}^{(a)}) &= \delta^2 \sum_{r,s,b,\alpha,\beta} \< \< \psi(e_{rr}^{(b)})^{1/2}f_{rs}^{(b)}, p_\beta^*f_{ji}^{(a)}\>_\psi \psi(e_{rr}^{(b)})^{1/2}f_{rs}^{(b)}, p_\alpha\>_\psi  q_\alpha q_\beta\\
        &= \delta^2 \sum_{\alpha, \beta} \<p_\beta^* f_{ji}^{(a)}, p_\alpha\>_\psi q_\alpha q_\beta\\
        &= \delta^2 \sum_{\alpha,\beta } (\psi\otimes 1)( f_{ij}^{(a)}\cdot (p_\beta \otimes q_\beta)\# (p_\alpha\otimes q_\alpha))\\
        &= \delta^2 \sum_\alpha (\psi\otimes 1)( f_{ij}^{(a)}\cdot (p_\alpha \otimes q_\alpha)) = A_\xi(f_{ij}^{(a)}).
    \end{align*}
Thus $A_\xi$ is a quantum adjacency matrix for $(B,\psi)$. Moreover, by a similar computation one can show that $\frac{1}{\delta^2}(1\otimes A_\xi)m^*(1)=\xi$ and so there is in fact a one-to-one correspondence between completely positive quantum adjacency matrices for $(B,\psi)$ and the set
    \[
        \{\xi\in B\otimes_\psi B\colon  \xi\#\xi=\xi,\ (\sigma_{i/2}^\psi\otimes 1)(\xi)^*=(\sigma_{i/2}^\psi\otimes 1)(\xi)\}.
    \]
More generally, we have the following recognition result for the quantum edge correspondence.

\begin{proposition}\label{prop:recognition}
Let $(B,\psi)$ be a finite quantum space such that $\psi$ is a $\delta$-form, and let $X=B\cdot\xi\cdot B$ be a cyclic C*-correspondence over $B$. If the map $A\colon B\to B$ defined by $A(x)=\delta^2\<\xi, x\cdot \xi\>_B$ is a quantum adjacency matrix for $(B,\psi)$, then $x\cdot \xi\cdot y\mapsto x\cdot  \epsilon_{(B,\psi,A)}\cdot y$ extends to a C*-correspondence isomorphism from $X$ to the quantum edge correspondence $E_{(B,\psi, A)}$.
\end{proposition}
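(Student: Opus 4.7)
The strategy is to recognize that the $B$-valued inner products on $X$ and on $E_\cG$, when evaluated on the cyclic generators, are both completely determined by the single map $A$. This will make the proposed assignment $x\cdot\xi\cdot y\mapsto x\cdot\epsilon_\cG\cdot y$ automatically isometric and bimodular, giving the desired C*-correspondence isomorphism.

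First I would observe that $A$ is automatically completely positive, independently of any further hypothesis: given $[x_{ij}] = [y_{ki}^*y_{kj}]\in M_n(B)^+$, the matrix $\delta^{-2}[A(x_{ij})] = [\langle\xi, x_{ij}\cdot\xi\rangle_B]$ decomposes as a sum over $k$ of the Gram matrices $[\langle y_{ki}\cdot\xi, y_{kj}\cdot\xi\rangle_B]$, each positive in $M_n(B)$. Combined with the hypothesis that $A$ is a quantum adjacency matrix, this means $\cG := (B,\psi,A)$ is a quantum graph to which \Cref{thm:cp_Adjacency_matrices_are_inner_products} applies, giving $\langle x\cdot\epsilon_\cG, y\cdot\epsilon_\cG\rangle_B = \tfrac{1}{\delta^2}A(x^*y)$ for all $x,y\in B$.

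On the side of $X$, I would simply use that the left action $\varphi_X\colon B\to \cL(X)$ is a $\ast$-homomorphism to compute
\[
    \langle x\cdot\xi, y\cdot\xi\rangle_B \;=\; \langle\xi, \varphi_X(x)^*\varphi_X(y)\xi\rangle_B \;=\; \langle \xi, x^*y\cdot\xi\rangle_B \;=\; \tfrac{1}{\delta^2}A(x^*y),
\]
which is immediate from the definition of $A$. Combining this with the previous paragraph shows that the two $B$-valued sesquilinear forms $(x_1\otimes y_1, x_2\otimes y_2)\mapsto \langle x_1\cdot\xi\cdot y_1, x_2\cdot\xi\cdot y_2\rangle_B$ and $(x_1\otimes y_1, x_2\otimes y_2)\mapsto \langle x_1\cdot\epsilon_\cG\cdot y_1, x_2\cdot\epsilon_\cG\cdot y_2\rangle_B$ on $B\otimes_{\mathrm{alg}}B$ are identical, both equal to $\tfrac{1}{\delta^2}y_1^*A(x_1^*x_2)y_2$.

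To conclude, I would consider the two surjective $B$-bimodule maps $T_X, T_E \colon B\otimes_{\mathrm{alg}}B \to X, E_\cG$ sending $x\otimes y$ to $x\cdot\xi\cdot y$ and to $x\cdot\epsilon_\cG\cdot y$, respectively (surjectivity is exactly the cyclicity assumption on $X$ and the definition of $E_\cG$). Since the induced sesquilinear forms agree and since $\zeta=0 \Leftrightarrow \langle\zeta,\zeta\rangle_B=0$ in any Hilbert $B$-module, the kernels of $T_X$ and $T_E$ coincide, so $T_E\circ T_X^{-1}$ descends to a well-defined linear bijection $X\to E_\cG$ sending $x\cdot\xi\cdot y$ to $x\cdot\epsilon_\cG\cdot y$, preserving both the bimodule structure and the $B$-valued inner product. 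No serious obstacle arises; the only subtle point worth flagging is recognizing that the automatic complete positivity of $A$ is precisely what licenses the use of \Cref{thm:cp_Adjacency_matrices_are_inner_products}.
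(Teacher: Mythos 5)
Your proof is correct and follows essentially the same route as the paper: both invoke Theorem~\ref{thm:cp_Adjacency_matrices_are_inner_products} to identify $\langle x\cdot\epsilon_{\cG}, y\cdot\epsilon_{\cG}\rangle_B$ with $\tfrac{1}{\delta^2}A(x^*y)$, match this against $\langle x\cdot\xi, y\cdot\xi\rangle_B = \langle\xi, x^*y\cdot\xi\rangle_B$, and then extend by $B$-bimodularity to get a well-defined, inner-product-preserving (hence injective) surjection $x\cdot\xi\cdot y\mapsto x\cdot\epsilon_{\cG}\cdot y$. Your explicit check that $A$ is automatically completely positive (which licenses the use of that theorem) is handled in the paper only in the discussion preceding the proposition, so flagging it is a reasonable addition rather than a genuine departure.
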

\begin{proof}
If we denote $\epsilon:=\epsilon_{(B,\psi,A)}$, then Theorem~\ref{thm:cp_Adjacency_matrices_are_inner_products} implies
    \[
        \<e_{ij}^{(a)}\cdot \xi, e_{rs}^{(b)}\cdot \xi\>_B = \delta_{\substack{a=b\\i=r}} \<\xi, e_{js}^{(a)}\cdot \xi\>_B = \delta_{\substack{a=b\\i=r}} \frac{1}{\delta^2}A(e_{js}^{(a)}) = \< e_{ij}^{(a)}\cdot\epsilon, e_{rs}^{(b)}\cdot \epsilon\>_B
    \]
for all $1\leq a, b\leq d$, $1\leq i,j\leq N_a$, and $1\leq r,s\leq N_b$. It follows that for all $x,y,z,w\in B$ that
    \[
        \<x\cdot \xi\cdot y, z\cdot \xi\cdot w\>_B = y^* \< x\cdot \xi, z\cdot \xi\>_B w = y^*\< x\cdot \epsilon, z\cdot \epsilon\>_B w = \< x\cdot \epsilon\cdot y, z\cdot \epsilon\cdot w\>_B.
    \]
Therefore $\sum_i x_i\cdot \xi\cdot y_i\mapsto \sum_i x_i\cdot\epsilon\cdot y_i$ is a well-defined, inner product preserving (and hence injective), $B$-bilinear map from $B\cdot\xi\cdot B=X$ onto $B\cdot \epsilon\cdot B =E_\cG$. In other words, it is a C*-correspondence isomorphism.
\end{proof}

It is straightforward to check that any homomorphism $A:B\to B$ is a quantum adjacency matrix for a quantum space $(B,\psi)$ (independent, in fact, of $\psi$). Furthermore, every homomorphism of a C*-algebra that is completely positive must necessarily be $*$-preserving, so it is natural to ask if the one-to-one correspondence in Proposition~\ref{prop:recognition} restricts to a one-to-one correspondence between $*$-homomorphisms and a subset of $\{\xi\in B\otimes_\psi B\colon  \xi\#\xi=\xi,\ (\sigma_{i/2}^\psi\otimes 1)(\xi)^*=(\sigma_{i/2}^\psi\otimes 1)(\xi)\}.$ 

\begin{proposition}\label{prop:when_A_is_a_homomorphism}
Let $(B,\psi)$ be a finite quantum space, and suppose $A:B\to B$ is a completely positive quantum adjacency matrix for $(B,\psi).$ The following are equivalent:
\begin{enumerate}[label=(\roman*)]
\item $A$ is a homomorphism.
\item For all $x,y\in B$, we have $(xy)\cdot \epsilon_\cG=x\cdot \epsilon_\cG\cdot A(y)$.
\end{enumerate}
\end{proposition}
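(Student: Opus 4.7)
The plan is to use the inner product formula from \cref{thm:cp_Adjacency_matrices_are_inner_products} to reduce both directions to direct algebraic manipulations.

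\textbf{For (ii) $\Rightarrow$ (i):} This direction is the easy one. Starting from $(xy)\cdot \epsilon_\cG = x\cdot \epsilon_\cG\cdot A(y)$, I would apply the slice map $\delta^2(\psi\otimes 1)$ to both sides. By \cref{prop:properties_of_edge_generator}(1), the left-hand side becomes $A(xy)$, while the right-hand side becomes $\delta^2(\psi\otimes 1)(x\cdot \epsilon_\cG)\cdot A(y) = A(x)A(y)$, since the right action of $A(y)$ passes through the slice map. This gives $A(xy)=A(x)A(y)$, and since $A$ is completely positive it is automatically $*$-preserving, so $A$ is a $*$-homomorphism.

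\textbf{For (i) $\Rightarrow$ (ii):} The idea is to show the difference $\xi:=(xy)\cdot \epsilon_\cG - x\cdot \epsilon_\cG\cdot A(y)$ has $\langle \xi,\xi\rangle_B = 0$, which suffices since $E_\cG$ is a Hilbert $B$-module. I would expand this inner product into four terms using the basic identity $\langle u\cdot \epsilon_\cG, v\cdot \epsilon_\cG\rangle_B = \frac{1}{\delta^2}A(u^*v)$ from \cref{thm:cp_Adjacency_matrices_are_inner_products}, together with right $B$-linearity. The four terms to compute are $\frac{1}{\delta^2}A(y^*x^*xy)$, $\frac{1}{\delta^2}A(y^*x^*x)A(y)$, $\frac{1}{\delta^2}A(y)^*A(x^*xy)$, and $\frac{1}{\delta^2}A(y)^*A(x^*x)A(y)$, appearing with signs $+,-,-,+$. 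Under the assumption that $A$ is a $*$-homomorphism, each of these terms equals $\frac{1}{\delta^2}A(y)^*A(x^*x)A(y)$, so the signed sum vanishes.

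\textbf{Main obstacle:} There is none of substance: the argument is a mechanical application of the inner-product formula of \cref{thm:cp_Adjacency_matrices_are_inner_products} together with \cref{prop:properties_of_edge_generator}(1). The only small subtlety is the observation that complete positivity of $A$ together with multiplicativity forces $*$-preservation, which is why one need not separately assume $A(x^*) = A(x)^*$ in (i). I would mention this explicitly so that the statement of (i) is unambiguous.
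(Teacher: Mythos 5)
Your proposal is correct and takes essentially the same route as the paper: both directions come down to the formula $\langle u\cdot \epsilon_\cG, v\cdot \epsilon_\cG\rangle_B=\frac{1}{\delta^2}A(u^*v)$ from \cref{thm:cp_Adjacency_matrices_are_inner_products}, with (ii)$\Rightarrow$(i) obtained by slicing/pairing against $\epsilon_\cG$ and (i)$\Rightarrow$(ii) by showing the difference $(xy)\cdot\epsilon_\cG-x\cdot\epsilon_\cG\cdot A(y)$ has vanishing $B$-valued inner product with itself. The only cosmetic difference is that the paper verifies orthogonality against the spanning elements $a\cdot\epsilon_\cG\cdot b$ before specializing, whereas you expand $\langle\xi,\xi\rangle_B$ into the four terms directly; the computation and the use of $*$-preservation (automatic from complete positivity) are the same.
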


\begin{proof}
Suppose for all $x,y\in B$, we have $(xy)\cdot \epsilon_\cG=x\cdot \epsilon_\cG\cdot A(y).$ Proposition~\ref{prop:recognition} implies
$$
    A(xy)
    =\langle \epsilon_\cG, (xy)\cdot \epsilon_\cG\rangle_B
    =\langle \epsilon_\cG, x\cdot \epsilon_\cG\cdot A(y)\rangle_B
     =\langle \epsilon_\cG, x\cdot \epsilon_\cG\rangle_B A(y)
     =A(x)A(y)
$$
for all $x,y\in B.$ Conversely, suppose $A$ is a homomorphism, and fix $x,y\in B$. For any $a,b\in B$,
$$
\langle a\cdot \epsilon_\cG \cdot b, (xy)\cdot \epsilon_\cG\rangle_B
=
b^* A(a^*xy)%
=
b^* A(a^*x)A(y)
=
\langle a\cdot\epsilon_\cG\cdot b, x\cdot \epsilon_\cG\cdot  A(y)\rangle_B
$$
As elements of the form $a\cdot \epsilon_\cG\cdot b$ span $E_\cG$, for any $\xi\in E_\cG$, 
$$\langle \xi, (xy)\cdot \epsilon_\cG - x\cdot \epsilon_\cG \cdot A(y)\rangle_B=0\quad \text{for all}\;x,y\in B.$$ In particular, $\xi_o:=(xy)\cdot \epsilon_\cG - x\cdot \epsilon_\cG \cdot A(y)$ is an element of $E_\cG$, so $\langle \xi_o,\xi_o\rangle_B=0$. By positive-definiteness of $\langle \cdot, \cdot \rangle_B,$ we may conclude $\xi_o=0.$ Thus, $xy\cdot \epsilon_\cG=x\cdot \epsilon_\cG\cdot A(y)$ for all $x,y\in B.$
\end{proof}

When $A$ is a $*$-automorphism, the quantum edge correspondence $E_\cG$ arising from a quantum graph $\cG:=(B,\psi,A)$ is the span of $B\cdot \epsilon_\cG$ or the span of $\epsilon_\cG\cdot B$. Indeed, given $x\cdot \epsilon_\cG\cdot y\in E_\cG$ for some $x,y\in B$, Proposition~\ref{prop:when_A_is_a_homomorphism} implies that the maps $E_\cG\to E_\cG$ given by $x\cdot \epsilon_G\cdot y\mapsto xA^{-1}(y)\cdot \epsilon_\cG$ and  $x\cdot \epsilon_\cG\cdot y\mapsto \epsilon_\cG\cdot A(x)y$ are both the identity map on $E_\cG.$

\subsection{Faithfulness and fullness of the quantum edge correspondence}

Recall the definitions of faithful and full for C*-correspondences from Section~\ref{subsec:correspondences_and_CP_algebras}. The next theorem shows that these features for a quantum edge correspondence $E_\cG$ are determined by the quantum adjacency matrix $A$.

\begin{theorem}\label{thm:faithful_full_correspondence}
Let $\cG=(B,\psi,A)$ be a directed quantum graph such that $\psi$ is a $\delta$-form and $A$ is completely positive, and let $E_\cG$ be the quantum edge correspondence. If $1_a:=\sum_{i=1}^{N_a} e_{ii}^{(a)}$ for each $1\leq a\leq d$, then
    \[
        \{x\in B\colon x\cdot \xi=0\ \forall \xi\in E_\cG\}= \left(BA^{\ast}(B)B\right)^{\perp}
    \]
and
    \[
        \overline{\text{span}}\<E_\cG, E_\cG\>_B = 
        B\cdot A(B) \cdot B,
    \]
    the two-sided ideal of $B$ generated by the range of $A$. 
In particular, $E_\cG$ is faithful if and only if  $\ker(A)$ does not contain a central summand of $B$, and $E_\cG$ is full if and only if $A(B)$ is not orthogonal to a central summand of $B$.
\end{theorem}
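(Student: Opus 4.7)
The plan is to reduce every assertion to a summand-by-summand question about the central decomposition $B = \bigoplus_{a=1}^d Z_a$ (with $Z_a := M_{N_a}(\bC)$), by systematically using \Cref{thm:cp_Adjacency_matrices_are_inner_products} to translate between inner products on $E_\cG$ and values of $A$.

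I would begin by computing, for $x,y,z,w\in B$,
\[
\langle x\cdot\epsilon_\cG\cdot y,\,z\cdot\epsilon_\cG\cdot w\rangle_B = y^*\langle x\cdot\epsilon_\cG,\,z\cdot\epsilon_\cG\rangle_B\, w = \tfrac{1}{\delta^2}\, y^*A(x^*z)w,
\]
using \Cref{thm:cp_Adjacency_matrices_are_inner_products}. Since $B$ is unital, this immediately yields $\overline{\text{span}}\langle E_\cG,E_\cG\rangle_B = B\cdot A(B)\cdot B$, the two-sided ideal generated by $A(B)$.

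To identify $\ker\varphi_{E_\cG}$, where $\varphi_{E_\cG}\colon B\to \cL(E_\cG)$ is the left action, I would use that this is a $*$-closed two-sided ideal of $B$ and hence a central ideal $\bigoplus_{a\in S}Z_a$. The key observation is that $Z_a\subseteq \ker\varphi_{E_\cG}$ iff $\varphi_{E_\cG}(1_a)=0$ iff $1_a y\cdot\epsilon_\cG=0$ for every $y\in B$ (the condition $1_a\cdot(y\cdot\epsilon_\cG\cdot w)=0$ reduces to this upon taking $w=1$), iff $A((1_ay)^*(1_ay))=0$ for every $y$. As $y$ ranges over $B$ the element $1_ay$ ranges over all of $Z_a$, so this is equivalent to $A$ vanishing on the positive cone of $Z_a$, hence to $Z_a\subseteq\ker A$. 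Therefore $\ker\varphi_{E_\cG}=\bigoplus_{a:\,Z_a\subseteq\ker A}Z_a$, which already establishes the faithfulness criterion.

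The identification with $(BA^*(B)B)^\perp$ uses that $\psi$ decomposes as $\psi = \sum_a\psi|_{Z_a}$, so the central summands are pairwise $\psi$-orthogonal; consequently for any two-sided ideal $I=\bigoplus_{a\in T}Z_a$ one has $I^\perp=\bigoplus_{a\notin T}Z_a$. Letting $P_a$ denote the $\psi$-orthogonal projection onto $Z_a$ (which satisfies $P_a^*=P_a$), the ideal $BA^*(B)B$ contains $Z_a$ iff $P_a\circ A^*\neq 0$, and by adjoint duality $P_a A^* = 0 \iff AP_a=0 \iff Z_a\subseteq\ker A$. Combining gives $(BA^*(B)B)^\perp=\ker\varphi_{E_\cG}$. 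The parallel $P_a$-analysis of $BA(B)B$ shows it equals $B$ iff $P_a\circ A\neq 0$ for all $a$, which is precisely the condition that $A(B)$ is not $\psi$-orthogonal to any central summand, giving the fullness criterion.

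The main obstacle is conceptual rather than computational: one should resist attempting to deduce $\ker\varphi_{E_\cG}=(BA^*(B)B)^\perp$ elementwise, since $\psi$ need not be tracial and modular-theoretic subtleties appear the moment one tries to compare $x^*x\perp_\psi I$ directly with $x\in I^\perp$. The insight that makes everything clean is that both sides of the claimed equality are central ideals, so equality can be tested summand-by-summand using only the $A$/$A^*$ duality together with the splitting of $\psi$ over the central decomposition of $B$.
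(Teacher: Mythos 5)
Your argument is correct, and for the annihilator identity it takes a genuinely different route from the paper. The fullness computation is the same in both: expand $\langle x\cdot\epsilon_\cG\cdot y, z\cdot\epsilon_\cG\cdot w\rangle_B=\delta^{-2}y^*A(x^*z)w$ via \Cref{thm:cp_Adjacency_matrices_are_inner_products} and read off $B\cdot A(B)\cdot B$. For the kernel of the left action, the paper argues elementwise: $x\cdot\xi=0$ for all $\xi$ is unwound to $A(c^*xa)=0$ for all $a,c$, and then converted by $A$/$A^*$ duality directly into $x\in(BA^*(B)B)^\perp$; as written that last step silently moves $a$ across $\psi$ (i.e. it uses $\psi(A^*(y)^*c^*xa)=\psi(aA^*(y)^*c^*x)$, which strictly needs a modular correction or the observation that $\sigma^\psi_{-i}$ is a bijection of $B$ preserving the central ideal), which is exactly the subtlety you flag and sidestep. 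Your route instead uses that both $\ker\varphi_{E_\cG}$ and $BA^*(B)B$ are ideals in a finite-dimensional C*-algebra, hence sums of central summands: positive-definiteness of the $B$-valued inner product gives $Z_a\subseteq\ker\varphi_{E_\cG}\iff A|_{Z_a}=0$, and the duality $P_aA^*=0\iff AP_a=0$ (legitimate because the central summands are $\psi$-orthogonal, so $P_a$ is genuinely self-adjoint) matches this with $(BA^*(B)B)^\perp$. What each buys: the paper's computation is shorter and produces the perp-of-ideal description in one pass; yours yields the faithfulness criterion immediately (without passing through the orthogonal complement), is insensitive to the non-traciality of $\psi$, and makes the ``in particular'' statements transparent, at the mild cost of invoking the central decomposition of ideals and a separate check that $Z_a\subseteq BA^*(B)B$ iff $P_aA^*\neq0$. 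Both proofs rest on the same key input, \Cref{thm:cp_Adjacency_matrices_are_inner_products}.
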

\begin{proof}
Suppose $x\cdot \xi=0$ for all $\xi\in E_\cG$. This means precisely that for all $a,b \in B$ we have $xa\epsilon_{\cG} b=0$, which, on the other hand, is equivalent to $\langle c \epsilon_{\cG} d, xa \epsilon_{\cG} b\rangle=0$ for all $c,d \in B$. This expression is equal to $\delta^{-2}d^* A(c^* xa) b=0$, so it is equal to zero if and only if $A(c^* x a)=0$. This time using the inner product on $B$, we see that this is equivalent to $0=\langle y, A(c^* x a)\rangle = \langle c A^{\ast}(x) a^*, x \rangle $ for all $y \in B$. Therefore $x \in \left(B A^{\ast}(B) B\right)^{\perp}$.

Next, observe that
\begin{align*}
    \overline{\text{span}}\langle E_\cG, E_\cG\rangle_B &= \overline{\text{span}}\big\{\langle a \cdot \epsilon_\cG \cdot b, c \cdot \epsilon_\cG \cdot d\rangle_B: a,b,c,d \in B\big\} \\
    &= \overline{\text{span}}\big\{\delta^{-2}b^*A(a^*c)d: a,b,c,d \in B\big\} \\
    &=B \cdot A(B) \cdot B 
\end{align*}

\end{proof}

\begin{remark}
In the case of a classical directed graph $\cG=( C(V), \psi, A)$, the central summands of $C(V)$ are indexed by $V$. A central summand belongs to $\ker(A)$ when the corresponding vertex is a source (i.e. has no edges into it), and it is orthogonal to $A(C(V))$ when the corresponding vertex is a sink (i.e. has no edges out of it). Hence $E_\cG$ is faithful when $A$ has no zero columns and is full when $A$ has no zero rows.
\end{remark}

The previous remark motivates the following definitions.

\begin{definition}\label{def:quantum-sink-and-source}
 Let $\cG=(B,\psi, A)$ be a directed quantum graph such that $\psi$ is a $\delta$-form and $A$ is completely positive. A \textit{quantum sink} in $\cG$ is a central summand of $B$ that is orthogonal to the range of $A$. A \textit{quantum source} in $\cG$ is a central summand of $B$ that lies in the kernel of $A$.
\end{definition}

In the next section we will examine the Cuntz--Pimsner algebra $\cO_{E_\cG}$ associated to the quantum edge correspondence. Consequently, it is important to understand how to express the left $B$-action in terms of compact operators $\cK(E_\cG)$.

\begin{theorem}\label{thm:compact_operators}
Let $\cG=(B,\psi,A)$ be a directed quantum graph such that $\psi$ is a $\delta$-form and $A$ is completely positive, and let $E_\cG=B\cdot \epsilon_\cG\cdot B$ be the quantum edge correspondence. If $\{ f_{ij}^{(a)}\colon 1\leq a\leq d,\ 1\leq i,j\leq N_a\}$ are the adapted matrix units for $(B,\psi)$, then
    \[
        f_{ij}^{(a)}\cdot \xi = \sum_{k=1}^{N_a} \theta_{f_{ik}^{(a)}\cdot \epsilon_\cG,f_{jk}^{(a)}\cdot \epsilon_\cG}(\xi)
        \quad
        \text{for all }
        \xi\in E_\cG.
    \]

\end{theorem}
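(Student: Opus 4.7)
The plan is to verify the identity on a spanning set and then reduce the remaining equality to a single application of the quantum adjacency relation $m(A\otimes A)m^* = \delta^2 A$. First, I would note that both sides of the claimed identity depend right $B$-linearly on $\xi$: the left side because the left and right actions of $B$ on a C*-correspondence commute, and the right side because each $\theta_{\eta,\zeta}$ is right $B$-linear. Since $E_\cG = B\cdot\epsilon_\cG\cdot B$ is generated as a right $B$-module by the vectors $\{x\cdot\epsilon_\cG : x\in B\}$, and by linearity in $x$, it suffices to verify the identity on $\xi = f_{rs}^{(b)}\cdot\epsilon_\cG$ for a generic adapted matrix unit.

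With this reduction in hand, relation~\eqref{Eq:prodadaptedunits} collapses the LHS to $\delta_{a=b}\delta_{j=r}\,\psi(e_{jj}^{(a)})^{-1}\, f_{is}^{(a)}\cdot\epsilon_\cG$. For the RHS, Theorem~\ref{thm:cp_Adjacency_matrices_are_inner_products} rewrites each inner product as $\langle f_{jk}^{(a)}\cdot\epsilon_\cG, f_{rs}^{(b)}\cdot\epsilon_\cG\rangle_B = \delta^{-2}A(f_{kj}^{(a)} f_{rs}^{(b)})$, which by another application of~\eqref{Eq:prodadaptedunits} vanishes unless $a=b$ and $j=r$, and in that case equals $\delta^{-2}\psi(e_{jj}^{(a)})^{-1}A(f_{ks}^{(a)})$. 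After the common prefactor $\delta_{a=b}\delta_{j=r}\,\psi(e_{jj}^{(a)})^{-1}$ cancels from both sides, the statement reduces to the key identity
\[
\sum_{k=1}^{N_a} f_{ik}^{(a)}\cdot\epsilon_\cG\cdot A(f_{ks}^{(a)}) \;=\; \delta^2\, f_{is}^{(a)}\cdot\epsilon_\cG \quad \text{in } E_\cG.
\]

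I would then prove this key identity by comparing $B$-valued inner products against an arbitrary generator $z\cdot\epsilon_\cG\cdot w$ of $E_\cG$. Using Theorem~\ref{thm:cp_Adjacency_matrices_are_inner_products} on both sides, equality of inner products is equivalent to the scalar identity $\sum_k A(z^* f_{ik}^{(a)})\, A(f_{ks}^{(a)}) = \delta^2\, A(z^* f_{is}^{(a)})$. The left-hand side can be rewritten as $m(A\otimes A)\!\left(\sum_k z^* f_{ik}^{(a)}\otimes f_{ks}^{(a)}\right) = m(A\otimes A)m^*(z^* f_{is}^{(a)})$ using~\eqref{Eq:adaptedunits} and the $B$-bimodularity of $m^*$ (i.e.\ $m^*(z^* f_{is}^{(a)}) = z^*\cdot m^*(f_{is}^{(a)})$), and this equals the right-hand side by the defining property $m(A\otimes A)m^* = \delta^2 A$ of a quantum adjacency matrix. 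Positive-definiteness of $\langle\cdot,\cdot\rangle_B$ on $E_\cG$ then upgrades this equality of inner products to equality of vectors, completing the proof. I do not anticipate any serious obstacle beyond careful bookkeeping with adapted matrix units; the conceptual content is simply that the compact-operator decomposition of the left action is the quantum Schur idempotent condition in disguise.
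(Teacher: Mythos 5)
Your argument is correct, and it reaches the conclusion by a route that differs from the paper's in its main computational step. Both proofs make the same initial reduction (right $B$-linearity of both sides, so it suffices to check $\xi$ a matrix unit times $\epsilon_\cG$), and both use \eqref{Eq:prodadaptedunits} and Theorem~\ref{thm:cp_Adjacency_matrices_are_inner_products} to handle the products and inner products of adapted matrix units. The divergence is in how the resulting identity is established: the paper works directly at the level of vectors, invoking the idempotent property $\epsilon_\cG\#\epsilon_\cG=\epsilon_\cG$ from Proposition~\ref{prop:properties_of_edge_generator}(2), expanding $\epsilon_\cG$ in simple tensors, and massaging matrix units until the expression $\sum_k f_{ik}^{(a)}\cdot\epsilon_\cG\cdot(\psi\otimes 1)(e_{ks}^{(a)}\cdot\epsilon_\cG)$ appears, which parts (1) of that proposition and Theorem~\ref{thm:cp_Adjacency_matrices_are_inner_products} convert into the compact operators. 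You instead isolate the key identity $\sum_k f_{ik}^{(a)}\cdot\epsilon_\cG\cdot A(f_{ks}^{(a)})=\delta^2\, f_{is}^{(a)}\cdot\epsilon_\cG$ and prove it weakly, pairing against generators $z\cdot\epsilon_\cG\cdot w$, using \eqref{Eq:adaptedunits}, the bimodularity of $m^*$, and the Schur idempotent relation $m(A\otimes A)m^*=\delta^2 A$, and then upgrading to a vector identity by positive-definiteness of $\langle\cdot,\cdot\rangle_B$ (which is legitimate here, since $\psi(\langle\xi,\xi\rangle_B)=\|\xi\|^2_{\psi\otimes\psi}$ and $\psi$ is faithful, so no separation is needed). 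Your route avoids the explicit $\#$-computation with an expansion of $\epsilon_\cG$ and pulls the adjacency relation in directly --- in effect re-deriving a localized form of the idempotent property through inner products --- at the cost of leaning more heavily on Theorem~\ref{thm:cp_Adjacency_matrices_are_inner_products} and on nondegeneracy of the inner product; the paper's computation is longer in matrix-unit bookkeeping but stays entirely at the level of vectors. Either argument is acceptable.
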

\begin{proof}
Since both sides are right $B$-linear in $\xi$, it suffices to prove the equality for $\xi=e_{rs}^{(b)}\cdot \epsilon_\cG$, $1\leq b\leq d$ and $1\leq r,s\leq N_b$. Write $\epsilon_\cG=\sum_\alpha x_\alpha\otimes y_\alpha$. Then using Proposition~\ref{prop:properties_of_edge_generator}.(2) we have
    \begin{align*}
        f_{ij}^{(a)}\cdot (e_{rs}^{(b)}\cdot \epsilon_\cG) &= \delta_{\substack{a=b\\ j=r}} [\psi(e_{ii}^{(a)})\psi(e_{jj}^{(a)})]^{-1/2} e_{is}^{(b)}\cdot \epsilon_\cG\\
        &= \delta_{\substack{a=b\\ j=r}} [\psi(e_{ii}^{(a)})\psi(e_{jj}^{(a)})]^{-1/2} e_{is}^{(b)}\cdot (\epsilon_\cG\# \epsilon_\cG)\\
        &= \delta_{\substack{a=b\\ j=r}} [\psi(e_{ii}^{(a)})\psi(e_{jj}^{(a)})]^{-1/2} \sum_{k,\alpha,\beta} (e_{is}^{(b)}x_\alpha e_{kk}^{(a)} x_\beta)\otimes (y_\beta y_\alpha)\\
        &= \delta_{\substack{a=b\\ j=r}} [\psi(e_{ii}^{(a)})\psi(e_{jj}^{(a)})]^{-1/2} \sum_{k,\alpha,\beta} (x_\alpha)_{sk}^{(a)} (e_{ik}^{(a)}x_\beta)\otimes (y_\beta y_\alpha)\\
        &= \delta_{\substack{a=b\\ j=r}} [\psi(e_{ii}^{(a)})\psi(e_{jj}^{(a)})]^{-1/2} \sum_{k,\alpha,\beta} \frac{1}{\psi(e_{kk}^{(a)})} \psi( e_{ks}^{(a)}x_\alpha)(e_{ik}^{(a)}x_\beta)\otimes (y_\beta y_\alpha)\\
        &= \delta_{\substack{a=b\\ j=r}} [\psi(e_{ii}^{(a)})\psi(e_{jj}^{(a)})]^{-1/2} \sum_{k} \frac{1}{\psi(e_{kk}^{(a)})} e_{ik}^{(a)}\cdot \epsilon_\cG \cdot (\psi\otimes 1)(e_{ks}^{(a)}\cdot \epsilon_\cG).
    \end{align*}
Now, using Proposition~\ref{prop:properties_of_edge_generator}.(1) and Theorem~\ref{thm:cp_Adjacency_matrices_are_inner_products} we see that
    \[
        (\psi\otimes 1)(e_{ks}^{(a)}\cdot \epsilon_\cG) = \frac{1}{\delta^2} A(e_{ks}^{(a)}) = \< \epsilon_\cG, e_{ks}^{(a)}\cdot \epsilon_\cG\>_B = \<  e_{jk}^{(a)}\cdot \epsilon_\cG , e_{js}^{(a)}\cdot \epsilon_\cG\>_B.
    \]
So continuing our computation above, we have
    \begin{align*}
        f_{ij}^{(a)}\cdot (e_{rs}^{(b)}\cdot \epsilon_\cG) &= \delta_{\substack{a=b\\ j=r}} [\psi(e_{ii}^{(a)})\psi(e_{jj}^{(a)})]^{-1/2} \sum_{k} \frac{1}{\psi(e_{kk}^{(a)})} e_{ik}^{(a)}\cdot \epsilon_\cG \cdot \<  e_{jk}^{(a)}\cdot \epsilon_\cG , e_{js}^{(a)}\cdot \epsilon_\cG\>_B\\
            &= \sum_{k} f_{ik}^{(a)}\cdot \epsilon_\cG \cdot \< f_{jk}^{(a)}\cdot \epsilon_\cG, e_{rs}^{(b)}\cdot \epsilon_\cG\>_B\\
            &= \sum_k \theta_{f_{ik}^{(a)}\cdot \epsilon_\cG, f_{jk}^{(a)}\cdot \epsilon_\cG}( e_{rs}^{(b)}\cdot \epsilon_\cG),
    \end{align*}
as claimed.
\end{proof}

The following corollary is a rephrasing of Theorems~\ref{thm:cp_Adjacency_matrices_are_inner_products} and \ref{thm:compact_operators} in terms of linear maps, which will be useful in the next section. We leave the proof to the reader.

\begin{corollary}\label{cor:abstract_Toeplitz}
Let $\cG=(B,\psi,A)$ be a directed quantum graph such that $\psi$ is a $\delta$-form and $A$ is completely positive, and let $E_\cG=B\cdot \epsilon_\cG\cdot B$ be the quantum edge correspondence. If $(\pi,t)$ is a covariant representation of $E_\cG$ on C*-algebra $D$, then the linear map $T\colon B\to D$ defined by $T(x):= t(x\cdot \epsilon_\cG)$ satisfies
    \begin{align*}
        \mu_D(T^*\otimes T) &= \frac{1}{\delta^2}\pi A m,\\
        \mu_D(T\otimes T^*) m^* &=\psi_t,
    \end{align*}
where $\mu_D\colon D\otimes D\to D$ is the multiplication map, $T^*(x):=T(x^*)^*$, and $\psi_t\colon \cK(E_\cG)\to D$ is the $*$-homomorphism induced by $t$.
\end{corollary}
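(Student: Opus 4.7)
The plan is to verify both identities by direct evaluation on simple tensors and adapted matrix units, reducing each to one of the previous theorems. Throughout, I will use the two basic axioms of a representation—$\pi(x)t(\xi)=t(x\cdot\xi)$ and $t(\xi)^*t(\eta)=\pi(\langle\xi,\eta\rangle_B)$—together with the identity $\psi_t(\theta_{\xi,\eta})=t(\xi)t(\eta)^*$ for the induced homomorphism on compacts.

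For the first identity, I would evaluate $\mu_D(T^*\otimes T)$ on a simple tensor $x\otimes y\in B\otimes B$. Unpacking the definition of $T^*$ and $T$, the left side becomes $t(x^*\cdot\epsilon_\cG)^*\,t(y\cdot\epsilon_\cG)$, which by axiom (ii) of the representation equals $\pi(\langle x^*\cdot\epsilon_\cG,\,y\cdot\epsilon_\cG\rangle_B)$. \Cref{thm:cp_Adjacency_matrices_are_inner_products} then rewrites this inner product as $\tfrac{1}{\delta^2}A(xy)=\tfrac{1}{\delta^2}Am(x\otimes y)$, which after applying $\pi$ gives exactly the right side.

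For the second identity, I would test both sides against the adapted matrix units $f_{ij}^{(a)}$. Using \cref{Eq:adaptedunits}, $m^*(f_{ij}^{(a)})=\sum_k f_{ik}^{(a)}\otimes f_{kj}^{(a)}$, and since $(f_{jk}^{(a)})^*=f_{kj}^{(a)}$ one has $T^*(f_{kj}^{(a)})=T(f_{jk}^{(a)})^*$. Therefore
\[
    \mu_D(T\otimes T^*)m^*(f_{ij}^{(a)})=\sum_k t(f_{ik}^{(a)}\cdot\epsilon_\cG)\,t(f_{jk}^{(a)}\cdot\epsilon_\cG)^*=\sum_k \psi_t(\theta_{f_{ik}^{(a)}\cdot\epsilon_\cG,\,f_{jk}^{(a)}\cdot\epsilon_\cG}),
\]
and \Cref{thm:compact_operators} identifies this sum with $\psi_t(\varphi_{E_\cG}(f_{ij}^{(a)}))$. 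Since the $f_{ij}^{(a)}$ span $B$, linearity finishes the identity (with $\psi_t$ understood as $\psi_t\circ\varphi_{E_\cG}$; note that covariance is not needed here, though when it holds this further equals $\pi$).

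There is no conceptual obstacle: the corollary is really a repackaging of \cref{thm:cp_Adjacency_matrices_are_inner_products} and \cref{thm:compact_operators} in terms of the single linear map $T$, convenient for the Cuntz--Pimsner computations in the next section. The only mildly delicate bookkeeping is the interplay between the involution in $T^*$, the identity $(f_{jk}^{(a)})^*=f_{kj}^{(a)}$ for adapted units, and the placement of indices in $m^*$; once these are handled the calculation collapses onto the cited theorems.
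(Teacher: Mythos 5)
Your proof is correct and follows exactly the route the paper intends: the authors explicitly describe \cref{cor:abstract_Toeplitz} as a rephrasing of \cref{thm:cp_Adjacency_matrices_are_inner_products} and \cref{thm:compact_operators} and leave the verification to the reader, and your evaluation on simple tensors (for the first identity) and on the adapted matrix units via \cref{Eq:adaptedunits} (for the second) supplies precisely that verification. Your parenthetical remark that the second identity really reads $\mu_D(T\otimes T^*)m^*=\psi_t\circ\varphi_{E_\cG}$ and does not use covariance is an accurate and careful reading of how the corollary is used later.
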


\section{Quantum Cuntz--Krieger algebras and local relations}\label{sec:QCK_and_local_relations}

In this section we recall the quantum Cuntz--Krieger relations and define local quantum Cuntz--Krieger relations. The former differ slightly from those appearing in \cite[Section 3.2]{BEVW20} (see Remark~\ref{rem:unital_relation}). We will see in Theorem~\ref{thm:universal_local_quantum_Cuntz--Krieger_algebra} below that the Cuntz--Pimsner algebra for a faithful quantum edge correspondence plays the role of the universal C*-algebra generated by local quantum Cuntz--Krieger relations. This in turn allows us to deduce that such Cuntz--Pimsner algebras are quotients of quantum Cuntz--Krieger algebras (see Corollary~\ref{cor:quantum_Cuntz--Krieger_quotients}).

\begin{definition}\label{def:QCK}
Let $\cG=(B,\psi,A)$ be a directed quantum graph. We define a \textit{quantum Cuntz--Krieger $\cG$-family} in a unital C*-algebra $D$ to be a linear map $s\colon B\to D$ such that:
    \begin{enumerate}[label=(\roman*)]
        \item $\mu_D(\mu_D\otimes 1)(s\otimes s^* \otimes s)(m^* \otimes 1)m^*=s$ $\hfill (\textbf{QCK1})$
    
        \item $\mu_D(s^*\otimes s)m^* = \mu_D(s\otimes s^*)m^*A$ $\hfill (\textbf{QCK2})$
        
        \item $\mu_D (s \otimes s^*)m^*(1_B) = \frac{1}{\delta^2} 1_D$ $\hfill (\textbf{QCK3})$
    \end{enumerate}
where $\mu_D\colon D\otimes D\to D$ is the multiplication map for $D$ and  $s^*(b)=s(b^*)^*$ for $b\in B$. Then the \textit{quantum Cuntz--Krieger algebra} associated to $\cG$ is the universal unital C*-algebra  $\mathcal{O}(\cG)$ generated by the image of a quantum Cuntz--Krieger $\cG$-family $S\colon B\to \cO(\cG)$.
\end{definition} 

We show that Definition~\ref{def:QCK} gives the classical Cuntz--Krieger algebra when $\cG$ is a classical graph.

\begin{example}
Let $\cG=(B,\psi, A)$ be a classical graph, i.e., $B=C(V)$ is the finite-dimensional commutative C*-algebra arising from a simple finite directed graph $G=(V, E)$ on $N = |V|$ vertices, $\psi$ is the normalized trace on $B$, and $A$ is defined by the adjacency matrix $A_G$ on $G$ in the usual way. If $\left\{e_1, \ldots, e_N\right\}$ is the canonical basis of minimal projections in $B$, it is easy to see that $\psi(e_i)=1/N$, $m(e_i\otimes e_j)=\delta_{i=j}e_j$, and $m^*(e_i)=Ne_i\otimes e_i$ for all $i, j$. Moreover, $\psi$ is a $\delta$-form with $\delta^2=N$.

Let $S:B\to \cO(\cG)$ be a universal quantum Cuntz--Krieger $\cG$-family and define $S_i=NS(e_i) \in \cO(\cG)$. It was shown in \cite[Proposition 4.1]{BEVW20} that the $S_i$ are partial isometries satisfying $S_i^*S_i=\sum_{j=1}^NA_G(i,j)S_jS_j^*$ since $S:B\to \cO(\cG)$ satisfies (\textbf{QCK1}) and (\textbf{QCK2}).  One can see that the $S_i$ have mutually orthogonal range projections by observing that they sum to the identity in $\cO({\cG})$ since $S$ satisfies (\textbf{QCK3}). Explicitly, observe that
\begin{align*}
    \sum_{i=1}^N S_iS_i^*&=N^2\left[\sum_{i=1}^N S(e_i)S^*(e_i)\right]\\
    &=N\left[\sum_{i=1}^n \mu_{\cO(\cG)}(S\otimes S^*)m^*(e_i)\right]\\
    &= N\left[\frac{1}{\delta^2}1_{\cO(\cG)}\right]
    =1_{\cO(\cG)}.
\end{align*}

Thus, the $S_i$ form a Cuntz--Krieger $A_G$-family, which induces a $*$-homomorphism of $\cO_{A_E}$ onto $\cO(\cG)$. 

Conversely, given a universal Cuntz--Krieger $A_G$ family $\left\{S_i\right\}$, we can define $s:B \to \cO_{A_G}$ via $s(e_i)=\frac{1}{N}S_i\in \cO_{A_G}$. As mentioned in \cite[Proposition 4.1]{BEVW20}, one can check that $s$ satisfies (\textbf{QCK1}) and (\textbf{QCK2}). To see that $s$ satisfies (\textbf{QCK3}), consider
\begin{align*}
    \mu_{\cO_{A_G}}(s\otimes s^*)m^*(1_B) &= \sum_{i=1}^N\mu_{\cO_{A_G}}(s(Ne_i)\otimes s^*(e_i))\\
    &=\frac{1}{N}\sum_{i=1}^NS_iS_i^*\\
    &=\frac{1}{N}1_{\cO_{A_G}}.
\end{align*}
Hence, $s$ is a quantum Cuntz--Krieger $\cG$-family, which induces a $*$-homomorphism of $\cO(\cG)$ onto $\cO_{A_G}$. Checking that this map is the inverse of the previously induced $*$-homomorphism of $\cO_{A_G}$ onto $\cO(\cG)$ yields $\cO(\cG)$ is isomorphic to $\cO_{A_G}.$
\end{example}

\begin{remark}\label{rem:unital_relation}
In \cite{BEVW20}, a notion of quantum Cuntz--Krieger algebras was introduced without the relation $(\textbf{QCK3})$ (see \cite[Definition 3.7]{BEVW20}), which gives potentially non-unital, non-nuclear C*-algebras denoted $\mathbb{F}\cO(\cG)$. As discussed in \cite[Section 4.1]{BEVW20}, when $\cG$ is a classical graph $\mathbb{F}\cO(\cG)$ is a \emph{free} Cuntz--Krieger algebra (see \cite[Definition 2.5]{BEVW20}), whereas by the above example $\cO(\cG)$ is a Cuntz--Krieger algebra. Thus we will generally refer to $\mathbb{F}\cO(\cG)$ as the \emph{free} quantum Cuntz--Krieger algebra, and reserve the terminology ``quantum Cuntz--Krieger algebra'' for $\cO(\cG)$.
\end{remark}

Using adapted matrix units, one can produce a more explicit presentation of the quantum Cuntz--Krieger relations.  This is essentially the content of \cite[Proposition 3.9]{BEVW20}.  More precisely, if $s_{ij}^{(a)}:=s(f_{ij}^{(a)})$, where $\{f_{ij}^{(a)}\colon 1\leq a \leq d,\ 1\leq i,j\leq N_a\}$ are the adapted matrix units for $(B,\psi)$, then $s:B \to D$ is a quantum Cuntz--Krieger $\cG$-family if and only if the following relations hold:

     \begin{align}
         \sum_{k,\ell=1}^{N_a} s_{ik}^{(a)}(s_{\ell k}^{(a)})^*s_{\ell s}^{(a)} &=s_{is}^{(a)} \label{eqn:QCK1}\\
        \sum_{\ell =1}^{N_a}(s_{\ell i}^{(a)})^*s_{\ell j}^{(a)} &= \sum_{c=1}^d\sum_{\ell,m,n =1}^{N_c} A_{ija}^{\ell m c} s_{\ell n}^{(c)} (s_{m n}^{(c)})^*  \label{eqn:QCK2}\\
        \sum_{c=1}^d \sum_{\ell,m=1}^{N_c} \psi(e_{\ell,\ell}^{(c)}) s_{\ell m}^{(a)} (s_{\ell m}^{(c)})^* &=\frac{1}{\delta^2} 1_D  \label{eqn:QCK3}
    \end{align}

We now introduce localized versions of the above quantum Cuntz--Krieger relations.

\begin{definition}
Let $\cG=(B,\psi,A)$ be a directed quantum graph such that $\psi$ is a $\delta$-form. We define a \textit{local quantum Cuntz--Krieger $\cG$-family} in a unital C*-algebra $D$ to be a linear map $s\colon B\to D$ such that
    \begin{enumerate}[label=(\roman*)]
        \item $\mu_D(\mu_D \otimes 1)(s \otimes s^* \otimes s)(m^* \otimes 1) = \frac{1}{\delta^2}s m$ $\hfill(\textbf{LQCK1})$
        
        \item $\mu_D (s^* \otimes s) = \frac{1}{\delta^2} \mu_D(s \otimes s^*)m^*Am$ $\hfill(\textbf{LQCK2})$
        
        \item $\mu_D (s \otimes s^*)m^*(1_B) = \frac{1}{\delta^2} 1_D$ $\hfill (\textbf{LQCK3})$
        
    \end{enumerate}
where $\mu_D\colon D\otimes D\to D$ is the multiplication map for $D$, $s^*(b)=s(b^*)^*$ for $b\in B$, and $m^*$ is the adjoint of $m$ with respect to the inner product given by $\psi$.
\end{definition}
Once again, using adapted matrix units, one can produce a more explicit presentation of the local quantum Cuntz--Krieger relations. This is the content of the following proposition whose proof is omitted as it is very similar to the proof of \cite[Proposition 3.9]{BEVW20}.

\begin{proposition}\label{prop:explicit_local_relations}
Let $\cG=(B,\psi,A)$ be a directed quantum graph such that $\psi$ is a $\delta$-form, and let $s\colon B\to D$ be a linear map into a unital C*-algebra. Denote $s_{ij}^{(a)}:=s(f_{ij}^{(a)})$, where $\{f_{ij}^{(a)}\colon 1\leq a \leq d,\ 1\leq i,j\leq N_a\}$ are the adapted matrix units for $(B,\psi)$. Then $s$ is a local quantum Cuntz--Krieger $\cG$-family if and only if the following relations hold:
    \begin{align}
         \sum_{k=1}^{N_a} s_{ik}^{(a)}(s_{jk}^{(a)})^*s_{rs}^{(b)} &=\delta_{\substack{a=b \\ j=r}} \frac{1}{\delta^2\psi(e_{jj}^{(a)})}s_{is}^{(a)} \label{eqn:QCP1}\\
        (s_{ij}^{(a)})^*s_{rs}^{(b)} &= \delta_{\substack{a=b\\ i=r}}\frac{1}{\delta^2\psi(e_{ii}^{(a)})}\sum_{c=1}^d\sum_{\ell,m,n =1}^{N_c} A_{jsa}^{\ell m c} s_{\ell n}^{(c)} (s_{m n}^{(c)})^*  \label{eqn:QCP2}\\
        \sum_{c=1}^d \sum_{\ell,m=1}^{N_c} \psi(e_{\ell,\ell}^{(c)}) s_{\ell m}^{(a)} (s_{\ell m}^{(c)})^* &=\frac{1}{\delta^2} 1_D  \label{eqn:QCP3}
    \end{align}
for all $1\leq a,b\leq d$, $1\leq i,j\leq N_a$, and $1\leq r,s\leq N_b$.
\end{proposition}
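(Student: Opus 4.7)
The plan is to verify, for each of the three abstract conditions \textbf{(LQCK1)}, \textbf{(LQCK2)}, \textbf{(LQCK3)}, that its validity on all of $B$ (or $B\otimes B$) is equivalent to the corresponding scalar identity (\ref{eqn:QCP1}), (\ref{eqn:QCP2}), (\ref{eqn:QCP3}) holding for every choice of indices. Since $\{f_{ij}^{(a)}\}$ is a linear basis of $B$ and $\{f_{ij}^{(a)}\otimes f_{rs}^{(b)}\}$ is a linear basis of $B\otimes B$, it suffices, in each case, to evaluate both sides of the abstract identity on such a basis element and compare. The only inputs needed are the formulas $m^*(f_{ij}^{(a)})=\sum_k f_{ik}^{(a)}\otimes f_{kj}^{(a)}$, the product rule \eqref{Eq:prodadaptedunits}, the expansion $A(f_{ij}^{(a)})=\sum_{b,r,s} A_{ija}^{rsb} f_{rs}^{(b)}$, and the identity $s^*(f_{ij}^{(a)})=s(f_{ji}^{(a)})^* = (s_{ji}^{(a)})^*$.

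For \textbf{(LQCK1)} I would apply both sides to $f_{ij}^{(a)}\otimes f_{rs}^{(b)}$. On the left, inserting $m^*(f_{ij}^{(a)})=\sum_k f_{ik}^{(a)}\otimes f_{kj}^{(a)}$ and then applying $\mu_D(\mu_D\otimes 1)(s\otimes s^*\otimes s)$ yields $\sum_k s_{ik}^{(a)}(s_{jk}^{(a)})^* s_{rs}^{(b)}$. On the right, $\frac{1}{\delta^2} s\bigl(f_{ij}^{(a)}f_{rs}^{(b)}\bigr)=\frac{1}{\delta^2}\delta_{a=b,\,j=r}\psi(e_{jj}^{(a)})^{-1} s_{is}^{(a)}$ by \eqref{Eq:prodadaptedunits}. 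Matching these gives (\ref{eqn:QCP1}). For \textbf{(LQCK2)} I would evaluate on $f_{ji}^{(a)}\otimes f_{rs}^{(b)}$ (so that the $s^*$ factor produces $(s_{ij}^{(a)})^*$). The left side is $(s_{ij}^{(a)})^* s_{rs}^{(b)}$. For the right, compute $m(f_{ji}^{(a)}\otimes f_{rs}^{(b)})=\delta_{a=b,\,i=r}\psi(e_{ii}^{(a)})^{-1} f_{js}^{(a)}$, apply $A$ to get $\sum_{c,\ell,m}A_{jsa}^{\ell m c} f_{\ell m}^{(c)}$, then apply $m^*$ to get $\sum_{c,\ell,m,n}A_{jsa}^{\ell m c} f_{\ell n}^{(c)}\otimes f_{nm}^{(c)}$, and finally apply $\mu_D(s\otimes s^*)$. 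Collecting constants reproduces (\ref{eqn:QCP2}).

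For \textbf{(LQCK3)} I would expand $1_B=\sum_{c,\ell}\psi(e_{\ell\ell}^{(c)})f_{\ell\ell}^{(c)}$, apply $m^*$ to obtain $\sum_{c,\ell,m}\psi(e_{\ell\ell}^{(c)}) f_{\ell m}^{(c)}\otimes f_{m\ell}^{(c)}$, and then apply $\mu_D(s\otimes s^*)$. Since $s^*(f_{m\ell}^{(c)})=(s_{\ell m}^{(c)})^*$, the result is $\sum_{c,\ell,m}\psi(e_{\ell\ell}^{(c)})s_{\ell m}^{(c)}(s_{\ell m}^{(c)})^*$, so the identity is precisely (\ref{eqn:QCP3}). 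The reverse implication in each case is immediate: if the scalar identity holds for all indices, then by linearity both sides of the abstract identity agree on a basis, hence on all of $B$ (respectively $B\otimes B$).

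There is no substantive obstacle here; the argument is purely a matter of expanding each abstract identity against the adapted matrix units and keeping careful track of indices. The cleanest way to present the write-up is probably to remark at the outset that by linearity we may test each relation against basis elements, apply the three formulas above, and read off the coefficients, referring the reader to the analogous computation in \cite[Proposition 3.9]{BEVW20} for the non-local relations to avoid duplicating the bookkeeping.
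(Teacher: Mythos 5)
Your proposal is correct and is essentially the argument the paper intends when it omits the proof as ``very similar to \cite[Proposition 3.9]{BEVW20}'': by linearity one tests each of (\textbf{LQCK1})--(\textbf{LQCK3}) against the adapted matrix units, using $m^*(f_{ij}^{(a)})=\sum_k f_{ik}^{(a)}\otimes f_{kj}^{(a)}$, the product rule \eqref{Eq:prodadaptedunits}, the expansion of $A$ in the coefficients $A_{ija}^{rsb}$, and $s^*(f_{ij}^{(a)})=(s_{ji}^{(a)})^*$, exactly as you do. Incidentally, your computation for (\textbf{LQCK3}) produces $\sum_{c,\ell,m}\psi(e_{\ell\ell}^{(c)})s_{\ell m}^{(c)}(s_{\ell m}^{(c)})^*$, which confirms that the superscript $(a)$ on the first factor in (\ref{eqn:QCP3}) is a typo for $(c)$.
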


Just as in the case of the quantum Cuntz--Krieger algebras above, one can also define, for any quantum graph $\cG = (B,\psi, A)$, a corresponding {\it local quantum Cuntz--Krieger algebra}. The local quantum Cuntz--Krieger algebra is the C$^\ast$-algebra generated by a universal local quantum Cuntz--Krieger $\cG$-family.
The following theorem is the main result of this section.  It shows that local quantum Cuntz--Krieger algebras are in fact  familiar objects -- under rather mild assumptions, they are precisely the Cuntz--Pimsner algebras of quantum edge correspondences.

\begin{theorem}\label{thm:universal_local_quantum_Cuntz--Krieger_algebra}
Let $\cG=(B,\psi,A)$ be a directed quantum graph such that $\psi$ is a $\delta$-form and $A$ is completely positive, and let $E_\cG=B\cdot \epsilon_\cG\cdot B$ be its quantum edge correspondence. Let $(\pi_{E_\cG},t_{E_\cG})$ denote the universal covariant representation of $E_\cG$ on the Cuntz--Pimsner algebra $\cO_{E_\cG}$.
Assume $\cG$ has no quantum sources. 
Then $S\colon B\to \mathcal{O}_{E_\cG}$ defined by $S(x):=\frac{1}{\delta} t_{E_{\cG}}(x\cdot \epsilon_\cG)$ is a local quantum Cuntz--Krieger $\cG$-family whose image generates $\cO_{E_\cG}$. Moreover, given any local quantum Cuntz--Krieger $\cG$-family $s\colon B\to D$ in a unital C*-algebra $D$ there exists a $*$-homomorphism $\rho\colon \mathcal{O}_{E_\cG}\to D$ such that $s=\rho\circ S$.
\end{theorem}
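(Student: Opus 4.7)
The plan is to verify the three local quantum Cuntz--Krieger relations for $S$ directly from \Cref{cor:abstract_Toeplitz}, and then to establish the universal property by constructing, from any local quantum Cuntz--Krieger $\cG$-family $s\colon B \to D$, a covariant representation of $E_\cG$ on $D$. For the first direction, set $T(x):=t_{E_\cG}(x\cdot\epsilon_\cG) = \delta S(x)$. Since $\cG$ has no quantum sources, \Cref{thm:faithful_full_correspondence} implies $E_\cG$ is faithful, so $J_{E_\cG}=B$ and covariance gives $\psi_{t_{E_\cG}}\circ\varphi_{E_\cG}=\pi_{E_\cG}$ on all of $B$. \Cref{cor:abstract_Toeplitz} now reads $\mu(T^*\otimes T)=\tfrac{1}{\delta^2}\pi_{E_\cG}Am$ and $\mu(T\otimes T^*)m^*=\pi_{E_\cG}$ (as maps into $\cO_{E_\cG}$). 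Dividing by appropriate powers of $\delta$ and using the representation identity $\pi_{E_\cG}(x)t_{E_\cG}(y\cdot\epsilon_\cG)=t_{E_\cG}(xy\cdot\epsilon_\cG)$ yields (LQCK1)--(LQCK3) after short manipulations. Moreover, $\pi_{E_\cG}(x)=\delta^2\mu(S\otimes S^*)m^*(x)\in C^*(S(B))$ and $t_{E_\cG}(x\cdot\epsilon_\cG\cdot y) = \delta S(x)\pi_{E_\cG}(y)$, so $\cO_{E_\cG}=C^*(\pi_{E_\cG}(B)\cup t_{E_\cG}(E_\cG)) = C^*(S(B))$.

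For the universal property, given a local quantum Cuntz--Krieger $\cG$-family $s\colon B\to D$, I define $\pi\colon B\to D$ by $\pi(x) := \delta^2 \mu_D(s\otimes s^*)m^*(x)$, and---after identifying $E_\cG\cong B\otimes_A B$ via \Cref{cor:quantum_graph_correspondence_as_cp_correspondence}---define $t\colon E_\cG \to D$ by
\[
t\Bigl(\sum_i x_i\cdot\epsilon_\cG\cdot y_i\Bigr) := \delta \sum_i s(x_i)\pi(y_i).
\]
Unitality of $\pi$ is exactly (LQCK3); $*$-preservation reduces to the Frobenius-algebra identity $\mathrm{flip}\circ(*\otimes*)\circ m^* = m^*\circ*$, which is immediate from \eqref{Eq:adaptedunits}; and multiplicativity follows by combining (LQCK1)---which unpacks to $\pi(x)s(y)=s(xy)$---with the Frobenius identity $m^*(xy)=(x\otimes 1)m^*(y)$.

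The main technical issue is well-definedness of $t$: if $\zeta := \sum_i x_i\cdot\epsilon_\cG\cdot y_i = 0$ in $E_\cG$, we need $\sum_i s(x_i)\pi(y_i)=0$ in $D$. Using (LQCK2) together with $\mu_D(s\otimes s^*)m^* = \tfrac{1}{\delta^2}\pi$, one computes
\[
s(x_i)^* s(x_j) = \mu_D(s^*\otimes s)(x_i^*\otimes x_j) = \tfrac{1}{\delta^4}\pi(A(x_i^* x_j)),
\]
so
\[
\Bigl(\sum_i s(x_i)\pi(y_i)\Bigr)^{\!*}\Bigl(\sum_j s(x_j)\pi(y_j)\Bigr) = \tfrac{1}{\delta^4}\pi\Bigl(\sum_{i,j} y_i^* A(x_i^* x_j) y_j\Bigr) = 0,
\]
because $\sum_{i,j}y_i^* A(x_i^*x_j)y_j$ is proportional to $\langle\zeta,\zeta\rangle_B=0$; then positivity in $D$ forces the sum to vanish. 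The same calculation yields $t(\xi)^* t(\eta)=\pi(\langle\xi,\eta\rangle_B)$, while $\pi(x)t(\xi)=t(x\cdot\xi)$ follows from $\pi(x)s(x')=s(xx')$. For covariance on $J_{E_\cG}=B$, \Cref{thm:compact_operators} gives $\varphi_{E_\cG}(f_{ij}^{(a)})=\sum_k\theta_{f_{ik}^{(a)}\cdot\epsilon_\cG,\,f_{jk}^{(a)}\cdot\epsilon_\cG}$, so $\psi_t(\varphi_{E_\cG}(f_{ij}^{(a)}))=\delta^2\sum_k s(f_{ik}^{(a)})s^*(f_{kj}^{(a)})=\pi(f_{ij}^{(a)})$. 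The universal property of $(\pi_{E_\cG}, t_{E_\cG})$ then produces a $*$-homomorphism $\rho\colon\cO_{E_\cG}\to D$ with $\rho\circ\pi_{E_\cG}=\pi$ and $\rho\circ t_{E_\cG}=t$, and therefore $\rho\circ S=s$.
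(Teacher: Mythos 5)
Your proposal is correct and follows essentially the same route as the paper: verify (LQCK1)--(LQCK3) for $S$ via \Cref{cor:abstract_Toeplitz} using faithfulness of $E_\cG$ (so $J_{E_\cG}=B$ and $\psi_{t_{E_\cG}}|_B=\pi_{E_\cG}$), then build the covariant pair $(\pi,t)$ with $\pi=\delta^2\mu_D(s\otimes s^*)m^*$ and $t(x\cdot\epsilon_\cG\cdot y)=\delta\, s(x)\pi(y)$ and invoke the universal property of $\cO_{E_\cG}$. Your explicit positivity argument for the well-definedness of $t$ is just a more careful phrasing of the same point the paper settles through the identity $t(\xi)^*t(\eta)=\pi(\langle\xi,\eta\rangle_B)$, so there is no substantive difference in approach.
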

\begin{proof}
We begin by showing that $S$ is a local quantum Cuntz--Krieger $\cG$-family. Let $\psi_{t_{E_\cG}} \colon \cK(E_\cG)\to \cO_{E_\cG}$ be the $*$-homomorphism induced by $t_{E_\cG}$, and let $\mu\colon \cO_{E_\cG}\otimes \cO_{E_\cG}\to \cO_{E_\cG}$ denote the multiplication map. Define a linear map $T\colon B\to \cO_{E_\cG}$ by $T(x):=t_{E_\cG}(x\cdot \epsilon_\cG)$, so that $S=\frac{1}{\delta} T$. Note that the quantum edge correspondence $E_\cG$ is faithful by Theorem~\ref{thm:faithful_full_correspondence} and finite dimensional so that the Katsura ideal $J_{E_\cG}=B$. Moreover, faithfulness allows us to identify $B\subset \cL(E_\cG)=\cK(E_\cG)$, and under this identification we have $\psi_{t_{E_\cG}}|_B = \pi$. With this observation in hand, we can readily verify the local quantum Cuntz--Krieger relations.

    \begin{itemize}
        \item[]\textbf{(LQCK1):} The second equation in Corollary~\ref{cor:abstract_Toeplitz} implies
            \begin{align*}
                \mu(\mu\otimes 1)(S\otimes S^*\otimes S)(m^*\otimes 1) &= \frac{1}{\delta^3} \mu(\mu\otimes 1)(T\otimes T^*\otimes T)(m^*\otimes 1)\\
                &= \frac{1}{\delta^3} \mu(\psi_{t_{E_\cG}}\otimes T)\\
                &=\frac{1}{\delta^3}\mu(\pi\otimes T)\\
                &= \frac{1}{\delta^3} Tm= \frac{1}{\delta^2} Sm,
            \end{align*}
        where the second-to-last equality follows from the relation $\pi_{E_\cG}(x)t_{E_\cG}(\xi) = t_{E_\cG}(x\cdot \xi)$ for $x\in B$ and $\xi\in E_\cG$.
        
        \item[]\textbf{(LQCK2):} Using both equations in Corollary~\ref{cor:abstract_Toeplitz} gives
            \begin{align*}
                \mu(S^*\otimes S) &= \frac{1}{\delta^2} \mu(T^*\otimes T) = \frac{1}{\delta^4} \pi A m = \frac{1}{\delta^2} \psi_{t_{E_\cG}} A m\\
                    &= \frac{1}{\delta^4} \mu(T\otimes T^*)m^* Am = \frac{1}{\delta^2} \mu(S\otimes S^*)m^*Am.
            \end{align*}
            
        \item[]\textbf{(LQCK3):} Using the second equation in Corollary~\ref{cor:abstract_Toeplitz} we have
            \[
                \mu(S\otimes S^*)m^*(1_B) = \frac{1}{\delta^2} \mu(T\otimes T^*)m^*(1_B) = \frac{1}{\delta^2} \pi_{E_{\cG}}(1_B) = \frac{1}{\delta^2} 1.
            \]
        Note that $\pi_{E_{\cG}}$ is necessarily unital since $1\cdot \xi=\xi$ for all $\xi\in E_{\cG}$.
    \end{itemize}
Thus $S$ is a local quantum Cuntz--Krieger $\cG$-family. We also have $C^*(S(B))=\cO_{E_\cG}$. Indeed, $\pi_{E_\cG}(B)\subset C^*(S(B))$ by Corollary~\ref{cor:abstract_Toeplitz}, and $t_{E_\cG}(\xi)\pi_{E_\cG}(b)=t_{E_\cG}(\xi\cdot b)$ implies $t_{E_\cG}(E_\cG)\subset C^*(S(B))$.

Now, suppose $s\colon B\to D$ is a local quantum Cuntz--Krieger $\cG$-family in a unital C*-algebra $D$. To obtain the desired homomorphism, we will construct a covariant representation of $E_\cG$ on $D$ and invoke the universal property of the Cuntz--Pimsner algebra $\cO_{E_\cG}$. Define a linear map $\pi\colon B\to D$ by
    \[
        \pi := \delta^2 \mu_D(s\otimes s^*)m^*,
    \]
where $\mu_D\colon D\otimes D\to D$ is the multiplication map. Then $\pi$ is unital by (\textbf{LQCK3}), $\pi^*=\pi$ in light of how the multiplication maps interact with the adjoint, and (\textbf{LQCK1}) implies
    \begin{align*}
        \mu_D(\pi\otimes \pi) &= \delta^4 \mu_D(\mu_D\otimes \mu_D)(s\otimes s^*\otimes s\otimes s^*)(m^*\otimes m^*)\\
            &= \delta^4 \mu_D( \left[\mu_D(\mu_D\otimes 1)(s\otimes s^*\otimes s)(m^*\otimes 1)\right]\otimes s^*)(1\otimes m^*)\\
            &= \delta^2 \mu_D( (sm)\otimes s^*)(1\otimes m^*)\\
            &= \delta^2 \mu_D( s\otimes s^*)m^*m = \pi m.
    \end{align*}
Hence $\pi$ is a $*$-homomorphism. 

Next define a linear map $t\colon E_\cG\to D$ by
    \[
        t(x\cdot \epsilon_\cG\cdot y):=\delta s(x)\pi(y) \qquad \qquad x,y\in B.
    \]
We will see below that $t(\xi)^*t(\eta)=\pi(\<\xi,\eta\>_B)$ for $\xi,\eta\in B$, which in particular will show this map is well-defined. By (\textbf{LQCK1}) we have
    \begin{align*}
        \mu_D(\mu_D\otimes 1)(\pi\otimes s\otimes \pi) &= \delta^2 \mu_D(\mu_D\otimes 1)(\mu_D\otimes 1\otimes 1)(s\otimes s^*\otimes s\otimes \pi)(m^*\otimes 1\otimes 1)\\
            &=  \mu_D( s\otimes \pi)(m\otimes 1).
    \end{align*}
Thus for $x,y,z\in B$ we have
    \begin{align*}
        \pi(x)t(y\cdot \epsilon_\cG\cdot z) &= \delta \mu_D(\mu_D\otimes 1)(\pi\otimes s\otimes \pi)(x\otimes y\otimes z)\\
            &= \delta \mu_D(s\otimes \pi)(m\otimes 1)(x\otimes y \otimes z) = s(xy)\pi(z) = t(xy\cdot \epsilon_\cG\cdot z).
    \end{align*}
and so $\pi(x)t(\xi)=t(x\cdot \xi)$ for all $x\in B$ and $\xi\in E_\cG$. Using (\textbf{LQCK2}) and the definition of $\pi$, we have
    \[
        \mu_D(s^*\otimes s) = \frac{1}{\delta^2} \mu_D(s\otimes s^*)m^* Am = \frac{1}{\delta^4} \pi A m.
    \]
Thus for $x,x',y,y'\in B$ we have
    \begin{align*}
        t(x\cdot \epsilon_\cG \cdot y)^* t(x'\cdot \epsilon_\cG \cdot y') &= \delta^2  (\mu_D\otimes \mu_D)(1\otimes \mu_D\otimes 1)(\pi^*\otimes s^*\otimes s\otimes \pi)(y^*\otimes x^*\otimes x'\otimes y')\\
            &=\frac{1}{\delta^2} (\mu_D\otimes \mu_D)(\pi \otimes (\pi A)\otimes \pi)(1\otimes m\otimes 1)(y^*\otimes x^*\otimes x'\otimes y')\\
            &= \frac{1}{\delta^2} \pi( y^* A(x^*x') y')\\
            &= \pi\left(y^* \<x\cdot \epsilon_\cG, x'\cdot \epsilon_\cG\>_B y'\right)  = \pi\left(\< x\cdot \epsilon_\cG\cdot y, x'\cdot \epsilon_\cG\cdot y'\>_B \right),
    \end{align*}
where the second-to-last equality follows from Theorem~\ref{thm:cp_Adjacency_matrices_are_inner_products}. Thus $(\pi,t)$ is a representation of $E_\cG$ on $D$, and Theorem~\ref{thm:compact_operators} implies it is covariant:
    \begin{align*}
        \psi_t(f_{ij}^{(a)}) &= \sum_{k=1}^{N_a} \psi_t\left( \theta_{f_{ik}^{(a)}\cdot \epsilon_\cG, f_{jk}^{(a)}\cdot \epsilon_\cG} \right)\\
            &=\sum_{k=1}^{N_a} t(f_{ik}^{(a)}\cdot \epsilon_\cG) t( f_{jk}^{(a)}\cdot \epsilon_\cG)^*\\
            &= \delta^2 \mu_D(s\otimes s^*)m^*(f_{ij}^{(a)}) = \pi(f_{ij}^{(a)}).
    \end{align*}
Thus the universal property for the Cuntz--Pimsner algebra $\cO_{E_\cG}$ implies there is a $*$-homomorphism $\rho\colon \cO_{E_\cG}\to D$ satisfying $\pi=\rho\circ \pi_{E_\cG}$ and $t=\rho\circ t_{E_\cG}$. In particular, we have
    \[
        s(x) = \frac{1}{\delta} t(x\cdot \epsilon_\cG) = \frac{1}{\delta} \rho\circ t_{E_\cG}(x\cdot \epsilon_\cG) = \rho(S(x)).
    \]
Hence $s=\rho\circ S$.
\end{proof}

Observe that any local quantum Cuntz--Krieger $\cG$-family is a (non-local) quantum Cuntz--Krieger $\cG$-family. Indeed, since $\psi$ is a $\delta$ form one has $mm^*=\delta^2 \id$. Thus if $s\colon B\to D$ satisfies (\textbf{LQCK1}) and (\textbf{LQCK2}), then applying $m^*$ to the right-hand sides of these relations yields (\textbf{QCK1}) and (\textbf{QCK2}), respectively. Also (\textbf{LQCK3}) and (\textbf{QCK3}) are identical. Hence the universal property for $\mathcal{O}(\cG)$ yields a unique $*$-homomorphism onto $C^*(s(D))$. In particular, if $\ker(A)$ does not contain a central summand of $B$, then the previous theorem yields the following:

\begin{corollary}\label{cor:quantum_Cuntz--Krieger_quotients}
Let $\cG=(B,\psi,A)$ be a directed quantum graph such that $\psi$ is a $\delta$-form and $A$ is completely positive, and let $E_\cG$ be its quantum edge correspondence. Assume that $\ker(A)$ does not contain a central summand of $B$. Then $\mathcal{O}_{E_\cG}\cong \mathcal{O}(\cG)/\mathcal{I}$ where $\mathcal{I}\triangleleft \mathcal{O}(\cG)$ is the closed two-sided ideal generated by the relations (\textbf{LQCK1}), (\textbf{LQCK2}), and (\textbf{LQCK3}).
\end{corollary}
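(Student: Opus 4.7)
The plan is to exhibit $\cO(\cG)/\mathcal{I}$ and $\cO_{E_\cG}$ as two C*-algebras possessing the same universal property (that of being the universal C*-algebra for local quantum Cuntz--Krieger $\cG$-families) and then invoke abstract universality to conclude they are canonically isomorphic.

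First I would observe that the hypothesis that $\ker(A)$ does not contain a central summand of $B$ is exactly the condition that $\cG$ has no quantum sources in the sense of \Cref{def:quantum-sink-and-source}. Thus \Cref{thm:universal_local_quantum_Cuntz--Krieger_algebra} applies: the map $S\colon B\to \cO_{E_\cG}$, $S(x) = \tfrac{1}{\delta} t_{E_\cG}(x\cdot \epsilon_\cG)$, is a local quantum Cuntz--Krieger $\cG$-family whose image generates $\cO_{E_\cG}$, and this pair has the universal property that every local quantum Cuntz--Krieger $\cG$-family $s\colon B\to D$ factors as $s = \rho\circ S$ for a unique $*$-homomorphism $\rho\colon \cO_{E_\cG}\to D$.

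Next I would verify that $\cO(\cG)/\mathcal{I}$ enjoys the same universal property. Let $S_{\cO}\colon B\to \cO(\cG)$ be the universal quantum Cuntz--Krieger $\cG$-family, define $\mathcal{I}$ precisely as the closed two-sided ideal generated by all elements of the form $\mathrm{LHS} - \mathrm{RHS}$ where $\mathrm{LHS}$ and $\mathrm{RHS}$ are the two sides of \textbf{(LQCK1)}, \textbf{(LQCK2)}, \textbf{(LQCK3)} evaluated at $S_\cO$ on arbitrary inputs from $B$, and let $q\colon \cO(\cG)\to \cO(\cG)/\mathcal{I}$ be the quotient. By construction $\tilde{S}:=q\circ S_\cO$ is a local quantum Cuntz--Krieger $\cG$-family generating $\cO(\cG)/\mathcal{I}$. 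Given any local quantum Cuntz--Krieger family $s\colon B\to D$, the observation in the paragraph immediately preceding the corollary shows $s$ is in particular a (non-local) quantum Cuntz--Krieger family, so the universal property of $\cO(\cG)$ yields $\tilde{\rho}\colon \cO(\cG)\to D$ with $s = \tilde{\rho}\circ S_\cO$. Since $s$ satisfies the local relations, $\tilde{\rho}$ vanishes on the generators of $\mathcal{I}$ and therefore descends to $\rho\colon \cO(\cG)/\mathcal{I}\to D$ with $s = \rho\circ \tilde{S}$; uniqueness follows because $\tilde{S}(B)$ generates $\cO(\cG)/\mathcal{I}$.

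With both universal properties in hand, I would apply each to the other's generating family to produce mutually inverse $*$-homomorphisms $\cO_{E_\cG}\leftrightarrows \cO(\cG)/\mathcal{I}$. Each composition is a $*$-homomorphism that fixes a generating set pointwise, hence is the identity, giving the claimed isomorphism. I expect the only subtle point to be the bookkeeping that $\mathcal{I}$ really is the ``smallest ideal forcing the local relations'' — i.e.\ that the list of generators I chose above does coincide with the ideal described in the statement, and that annihilation on these explicit generators really is equivalent to satisfaction of \textbf{(LQCK1)}--\textbf{(LQCK3)} by the quotiented family. This is essentially tautological given the definitions, so I anticipate no serious technical obstacle.
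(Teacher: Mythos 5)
Your proposal is correct and follows essentially the same route as the paper: the paragraph preceding the corollary observes that every local quantum Cuntz--Krieger family is a quantum Cuntz--Krieger family (apply $m^*$ and use $mm^*=\delta^2\,\mathrm{id}$), so $\cO(\cG)/\mathcal{I}$ is universal for local families, and combining this with the universal property of $\cO_{E_\cG}$ from Theorem~\ref{thm:universal_local_quantum_Cuntz--Krieger_algebra} yields the isomorphism. Your write-up just makes explicit the standard two-universal-objects argument that the paper leaves implicit.
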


\subsection{Behavior of $\cO_{E_\cG}$ under quantum graph isomorphisms}
In this section we briefly examine the relationship between the Cuntz--Pimsner algebras of quantum edge correspondences associated to quantum isomorphic quantum graphs.   We begin by recalling the notion of quantum isomorphism from \cite{BCEHPSWCMP19, BEVW20}.  Let $\cG_i=(B_i,\psi_i,A_i)$, $i=1,2$, be directed quantum graphs. We say that $\cG_1$ and $\cG_2$ are quantum isomorphic if there exists a Hilbert space $\cH$ and a unital $\ast$-homomorphism \[\theta_1:B_1 \to B_2 \otimes B(\cH)\]
which satisfies the following $\psi_i$ and $A_i$ covariance conditions
    \begin{align*}
        (\psi_2 \otimes \id)\theta_1 &=\psi_1(\cdot)1_{B(\cH)}, \\
        (A_2 \otimes \id)\theta_1 &= \theta_1 \circ A_1.
    \end{align*}
Note that the general theory guarantees that whenever a morphism $\theta_1:B_1 \to B_2 \otimes B(\cH)$ exists, there automatically exists a corresponding morphism $\theta_2:B_2 \to B_1 \otimes B(\cH)$ with analogous covariance conditions to those for $\theta_1$.  In particular, the notion of quantum isomorphism is symmetric in $\cG_1$ and $\cG_2$. 

Quantum isomorphisms between quantum graphs can be seen as relaxations (or generalizations) of the notion of an isomorphism between quantum graphs.  Indeed, in the special case where $\cH = \bC$, a quantum isomorphism between $\cG_1$ and $\cG_2$ defines  an ordinary isomorphism of quantum graphs. 

The following theorem should be compared with \cite[Theorem 6.13]{BEVW20}, which is a similar result in the context of (free) quantum Cuntz--Krieger algebas.  Note, however, that the conclusion of the following theorem is slightly stronger in that one gets {\it injective} morphisms $\Theta_i$ between Cuntz--Pimsner algebras below (compare with \cite[Remark 6.14]{BEVW20}).     

\begin{theorem}
Let $\cG_i=(B_i,\psi_i,A_i)$, $i=1,2$, be directed quantum graphs.  If $\cG_1$ is quantum isomorphic to $\cG_2$ with $*$-homomorphisms
    \[
        \theta_1\colon B_1\to B_2\otimes B(\cH)\qquad \text{and} \qquad \theta_2\colon B_2\to B_1\otimes B(\cH),
    \]
    as above, then there exist injective $*$-homomorphisms
    \[
        \Theta_1\colon \cO_{E_{\cG_1}}\to \cO_{E_{\cG_2}}\otimes B(\cH) \qquad \text{and} \qquad  \Theta_2\colon \cO_{E_{\cG_2}}\to \cO_{E_{\cG_1}}\otimes B(\cH)
    \]
satisfying
    \[
        \Theta_1\pi_{E_{\cG_1}} = (\pi_{E_{\cG_2}}\otimes 1)\theta_1 \qquad \text{and}\qquad \Theta_2\pi_{E_{\cG_2}} =  (\pi_{E_{\cG_1}}\otimes 1)\theta_2.
    \]
\end{theorem}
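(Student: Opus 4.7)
The plan is to invoke the universal property of $\cO_{E_{\cG_1}}$ as a local quantum Cuntz--Krieger algebra provided by Theorem~\ref{thm:universal_local_quantum_Cuntz--Krieger_algebra}, and then use a gauge-equivariant uniqueness argument to promote the resulting $*$-homomorphism into an embedding. The natural candidate for a local QCK $\cG_1$-family landing in $\cO_{E_{\cG_2}}\otimes B(\cH)$ is
\[
    s_1 := (S_2 \otimes \id_{B(\cH)}) \circ \theta_1 : B_1 \to \cO_{E_{\cG_2}} \otimes B(\cH),
\]
where $S_2\colon B_2\to\cO_{E_{\cG_2}}$ denotes the canonical universal local QCK $\cG_2$-family. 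If $s_1$ is a local QCK $\cG_1$-family, then universality delivers a $*$-homomorphism $\Theta_1\colon \cO_{E_{\cG_1}}\to \cO_{E_{\cG_2}}\otimes B(\cH)$ with $\Theta_1\circ S_1 = s_1$, and the required relation $\Theta_1\pi_{E_{\cG_1}} = (\pi_{E_{\cG_2}}\otimes 1)\theta_1$ then follows from Corollary~\ref{cor:abstract_Toeplitz}, which expresses $\pi_{E_{\cG_i}}|_{B_i}$ as $\delta^2 \mu(S_i\otimes S_i^*)m^*$ and is therefore preserved under any $*$-homomorphism defined on the $S_i$-generators. The morphism $\Theta_2$ is constructed symmetrically by interchanging the roles of $\cG_1$ and $\cG_2$.

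Verifying that $s_1$ is a local QCK $\cG_1$-family is the heart of the construction. It rests on three ingredients: (i) $\theta_1$ is a unital $*$-homomorphism, so $\theta_1 m = m_{B_2\otimes B(\cH)}(\theta_1\otimes \theta_1)$ and $s_1^* = (S_2^*\otimes \id)\theta_1$; (ii) the $A$-covariance $(A_2\otimes \id)\theta_1 = \theta_1 A_1$; and (iii) the $\psi$-covariance $(\psi_2\otimes \id)\theta_1 = \psi_1(\cdot)1_{B(\cH)}$, which, via the $\delta$-form identity $mm^* = \delta^2\id$, can be dualized to an $m^*$-intertwining that lets one slide $\theta_1$ past the coproducts. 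Substituting $s_1$ into each of (\textbf{LQCK1})--(\textbf{LQCK3}) and repeatedly applying (i)--(iii) to push $\theta_1$ through the compositions leaves an identity that reduces to the corresponding (\textbf{LQCK})-relation for $S_2$. This verification parallels the analogous computation in \cite[Theorem 6.13]{BEVW20} for (non-local) QCK families.

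For injectivity, observe first that $\theta_1$ is itself injective: for any state $\omega$ on $B(\cH)$ and any $x\in\ker\theta_1$, one has $\psi_1(x^*x) = (\psi_2\otimes \omega)\theta_1(x^*x) = 0$, and faithfulness of $\psi_1$ forces $x=0$. Combined with faithfulness of $\pi_{E_{\cG_2}}$ from Theorem~\ref{thm:faithful_full_correspondence} (implicit in the hypothesis that $\cG_2$ has no quantum sources), this yields that $\Theta_1|_{\pi_{E_{\cG_1}}(B_1)}$ is injective. Moreover, since $\Theta_1$ maps each generator $S_1(x)$ into the gauge-degree-one component $t_{E_{\cG_2}}(E_{\cG_2})\otimes B(\cH)$ with respect to the canonical $\bT$-action on the first tensor factor, $\Theta_1$ intertwines the gauge actions. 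Katsura's gauge-invariant uniqueness theorem for Cuntz--Pimsner algebras then upgrades injectivity on $\pi_{E_{\cG_1}}(B_1)$ to injectivity on all of $\cO_{E_{\cG_1}}$. The main technical hurdle will be cleanly deriving and applying the $m^*$-intertwining from the $\psi$-covariance of $\theta_1$; once that is in hand, the LQCK verification runs essentially as in \cite[Theorem 6.13]{BEVW20}, with the gauge-equivariance supplying the additional injectivity conclusion that is the true novelty of this statement.
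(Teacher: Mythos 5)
Your route is not the paper's, and as written it has a real gap: the universal property you lean on, \Cref{thm:universal_local_quantum_Cuntz--Krieger_algebra}, is only established under the hypothesis that $\cG_1$ has no quantum sources (equivalently, $E_{\cG_1}$ is faithful, so that the Katsura ideal is all of $B_1$ and the canonical family $S_1$ is genuinely a local QCK family, e.g.\ satisfies (\textbf{LQCK3}) at the unit). The theorem you are asked to prove assumes nothing of the sort, and the paper's proof deliberately bypasses the LQCK machinery: using \Cref{cor:quantum_graph_correspondence_as_cp_correspondence} it identifies $E_{\cG_i}\cong B_i\otimes_{A_i}B_i$, checks that $\theta_1\otimes\theta_1$ induces a map $T\colon B_1\otimes_{A_1}B_1\to\bigl(B_2\otimes_{A_2}B_2\bigr)\otimes B(\cH)$ which is $\theta_1$-bimodular and satisfies $\langle T\xi,T\eta\rangle=\theta_1(\langle\xi,\eta\rangle)$ --- an essentially one-line consequence of the covariance $(A_2\otimes\id)\theta_1=\theta_1A_1$ --- and thus obtains a covariant representation $(\pi_X\theta_1,t_X T)$ on $\cO_X\cong\cO_{E_{\cG_2}}\otimes B(\cH)$ (the last identification via \cite{Mor17}), finishing with the gauge-invariant uniqueness theorem \cite[Theorem 6.4]{Kat04} exactly as in your last step. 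Your detour through generators and relations therefore both imports an unwarranted hypothesis and trades that easy inner-product computation for the much heavier, unverified claim that $s_1=(S_2\otimes\id)\theta_1$ satisfies (\textbf{LQCK1})--(\textbf{LQCK3}); that verification hinges on an $m^*$-intertwining for $\theta_1$ in which the two $B(\cH)$ legs produced by $\theta_1\otimes\theta_1$ must be multiplied together, and this leg bookkeeping is precisely where the computation is delicate rather than a routine ``slide $\theta_1$ past the coproduct.'' The same unproved intertwining is also needed for your assertion that $\Theta_1\pi_{E_{\cG_1}}=(\pi_{E_{\cG_2}}\otimes1)\theta_1$ follows from \Cref{cor:abstract_Toeplitz}.

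Two smaller points. Your injectivity step misattributes the injectivity of $\pi_{E_{\cG_2}}$ to faithfulness of the correspondence via ``no quantum sources for $\cG_2$'' (again, not a hypothesis of the statement); in Katsura's framework $\pi_X$ is injective for any correspondence, and what you actually need is injectivity of $(\pi_{E_{\cG_2}}\otimes1)\circ\theta_1$, which follows from injectivity of $\theta_1$ (your state argument is fine) together with injectivity of $\pi_{E_{\cG_2}}\otimes\id$. The gauge-equivariance observation and the appeal to gauge-invariant uniqueness do coincide with the paper's concluding argument, so if you either add the no-sources hypotheses or, better, replace the LQCK detour by the direct construction of the correspondence morphism $T$ above, the proof closes.
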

\begin{proof}
Using Corollary~\ref{cor:quantum_graph_correspondence_as_cp_correspondence}, we can work with $B_i\otimes_{A_i} B_i$ rather than $E_{\cG_i}$, $i=1,2$. Denote $\tilde{A}_2:=A_2\otimes \text{id}_{B(\cH)}$, which is a completely positive map, and $X:=(B_2\otimes B(\cH))\otimes_{\tilde{A}_2} (B_2\otimes B(\cH))$. Note that $X\cong (B_2\otimes_{A_2} B_2)\otimes B(\cH)$, and so $\cO_X \cong \cO_{B_2\otimes_{A_2} B_2}\otimes B(\cH)$ by \cite[Example 6.4]{Mor17}. 

Now, for $x,y\in B_1$ and $\xi\in B_1\otimes B_1$ we have
    \[
        (\tilde{A}_2\otimes 1)( (\theta_1\otimes \theta_1)(\xi)^* (\theta_1\otimes \theta_1)(\xi) ) = (\theta_1\otimes \theta_1)(A_1\otimes 1)(\xi^*\xi).
    \]
It follows that $\theta_1\otimes \theta_1$ induces a linear map $T\colon B_1\otimes_{A_1} B_1\to X$ satisfying
    \[
        T(x\cdot \xi \cdot y) = \theta_1(x)\cdot T(\xi)\cdot \theta_1(y) \qquad x,y\in B_1,\ \xi\in B_1\otimes_{A_1} B_1,
    \]
and
    \[
        \<T(\xi),T(\eta)\>_{B_2\otimes B(\cH)} = \theta_1(\<\xi, \eta\>_{B_1}) \qquad \xi,\eta\in B_1\otimes_{A_1} B_1.
    \]
Thus if $(\pi_X,t_X)$ is the universal covariant representation of $X$ (note that we can take $\pi_X:= \pi_{E_{\cG_2}}\otimes 1$ and $t_X:=t_{E_{\cG_2}}\otimes 1$), then $(\pi_X\circ \theta_1, t_X\circ T)$ is a covariant representation of $B_1\otimes_{A_1} B_1$ on $\cO_X$. One easily checks that this representation is injective and admits a gauge action, and so \cite[Theorem 6.4]{Kat04} implies there is an injective $*$-homomomorphism $\Theta_1\colon \cO_{B_1\otimes_{A_1} B_1}\to \cO_X$. Reversing the roles of $B_1$ and $B_2$ yields $\Theta_2$.
\end{proof}

\section{Examples}\label{sec:simpler_examples}

In this section we consider three common types of quantum graphs and determine the isomorphism classes of the Cuntz--Pimsner algebras associated to their quantum edge correspondences. Notably, several of these examples can be realized as Exel crossed products associated to natural Exel systems. We refer the reader to \cite{Exel03-2}, \cite{Exel03}, and \cite{Exel04} for details but provide a brief summary of the construction of Exel crossed products below.

Let $\cC$ be a C*-algebra and $\alpha:\cC \to \cC$ be $\ast$-endomorphism. A {\em transfer operator} for $(\cC,\alpha)$ is a positive linear map on $\cC$ such that $\mathcal{L}(\alpha(f)g)=f\mathcal{L}(g)$ for all $f,g\in \cC$. The Exel crossed product is obtained by first constructing a Toeplitz algebra $\cT(\cC,\alpha,\mathcal{L})$ which is the universl C$^\ast$-algebra generated by a copy of $\cC$ along with an element $S$ such that $Sf=\alpha(f)S$ and $\mathcal{L}(f)=S^*fS$ for all $ f\in \cC.$ Two elements $f,k\in \cT(\cC,\alpha,\mathcal{L})$ form a {\em redundancy} if $f\in \cC, k\in \overline{\cC SS^*\cC},$ and $fgS=kgS$ for all $g\in \cC.$ The {\em Exel crossed product} $\cC\rtimes_{\alpha,\mathcal{L}} \mathbb{N}$ is the quotient of $\mathcal{T}(\cC,\alpha,\mathcal{L})$ by the ideal generated by differences $f-k$ of redundancies $(f,k)$. 

Given an $n\times n$ $\{0,1\}$-matrix $A$, one can construct the Markov subshift space of infinite paths in the graph associated to $A$, denoted by $X_A$.  $X_A$ is a compact space, and admits a natural shift action $\sigma:X_A \to X_A$ given by $\sigma (x_1,x_2,x_3, \ldots ) = (x_2,x_3, \ldots)$.  A nice class of commutative Exel systems $(C(X_A),\alpha,\cL)$ arises from this construction, where $\alpha(f) = f \circ \sigma$ for $f \in C(X_A)$ and \[\mathcal L(f) (x) = |\{y \in X_A \ | \ \sigma(y) = x\}|^{-1} \Big(
\sum_{\{y \in X_A \ | \ \sigma(y) = x\}}f(y)\Big).\]  It is known that the Cuntz--Krieger algebra $\cO_A$ is isomorphic to the Exel crossed product $ C(X_A)\rtimes_{\alpha,\cL} \bN$. Below, we explore the analogous relationship between noncommutative Exel systems and local quantum Cuntz--Krieger algebras.

\subsection{Complete quantum graphs}\label{subsection:completequantumgraphs}
Given a finite quantum space $(B,\psi)$ with $\delta$-form $\psi$, we denote by $K(B,\psi)$ the {\em complete quantum graph} $(B,\psi, A)$, whose quantum adjacency matrix $A$ is given by the rank-one map  $A=\delta^2\psi(\cdot)1_B$ \cite{BCEHPSWCMP19, BEVW20}.  The quantum edge correspondence for $K(B,\psi)$ is $E_K=B\otimes B$ since an adapted matrix unit computation yields that the quantum edge indicator is $\epsilon_K=1_B\otimes 1_B.$ The quantum Cuntz--Krieger algebra $ \cO(K(B,\psi))$ is generated by $\{
S_{ij}^{(a)}: 1 \le a \le d, \ 1 \le i,j \le N_a\}$
according to the relations
\begin{align} 
\label{QK1}
&(S^{(a)})^*S^{(a)} 
= 
1 \quad \text{in } M_{N_a}\otimes \mathcal{O}(K(B,\psi)) \qquad (1 \le a \le d)\\
\label{QK2}
& \sum\limits_{ars} \delta^2 \psi(e_{rr}^{(a)})S_{rs}^{(a)}S_{rs}^{(a)*}
=
1 \quad \text{in } \mathcal{O}(K(B,\psi))
\end{align}

In \cite[Theorem 4.5]{BEVW20}, the authors establish that their definition of a quantum Cuntz--Krieger algebra associated to $K(B,\psi)$ is isomorphic to $\cO_{\dim B}$ whenever $\delta^2 \in \mathbb N$. It is unknown if this also holds for $\delta^2\not\in \mathbb{N}$. 
The following proposition resolves this for local quantum Cuntz--Krieger algebras.

\begin{proposition} \label{prop:Cuntz--Pimsner-is-a-Cuntz-algebra}
$\cO_{E_K}$ is isomorphic to the Cuntz algebra $\cO_{n}$ where $n = \dim B$. 
\begin{proof}
By Theorem \ref{thm:universal_local_quantum_Cuntz--Krieger_algebra} and Proposition \ref{prop:explicit_local_relations}, the Cuntz--Pimsner algebra $\mathcal{O}_{E_{K}}$ is a quotient of the quantum Cuntz--Krieger algebra $\mathcal{O}(K(B,\psi))$ subject to relations (\ref{eqn:QCP1}), which remains the same, and (\ref{eqn:QCP2}), which in this case becomes: 

\begin{align}
    (S_{ij}^{(a)})^*(S_{rs}^{(b)})
    &=\delta_{\substack{a=b\\ i=r}}
    \frac{1}{\delta^2 \psi(e_{ii}^{(a)})}
    \sum\limits_{c\ell m}
    \left[
    \delta^2 \delta_{\substack{\ell=m\\ j=s}}\psi(e_{\ell \ell}^{(c)})
    \sum\limits_n S_{\ell n}^{(c)}(S_{mn}^{(c)})^*
    \right].\label{eq:2}
\end{align}
Simplifying equation \eqref{eq:2} using the unit relation (\ref{eqn:QCP3}), we get

\[
(S_{ij}^{(a)})^*(S_{rs}^{(b)})=\frac{\delta_{\substack{a=b\\ i=r\\ j=s}}}{\delta^2 \psi(e_{ii}^{(a)})}\left(\sum\limits_{c\ell n}\delta^2\psi(e_{\ell \ell}^{(c)}) S_{\ell n}^{(c)}(S_{\ell n}^{(c)})^*\right) = \frac{\delta_{\substack{a=b\\ i=r\\ j=s}}}{\delta^2 \psi(e_{ii}^{(a)})}(1_{\mathcal{O}(E_{K})}).
\]
Thus, $\left\{\delta\psi(e_{ii}^{(a)})^{1/2}S_{ij}^{(a)}:1\leq a\leq d, 1\leq i,j\leq N_a\right\}$ are generating isometries for $\mathcal{O}_{E_{K}}$ whose range projections sum to the identity, and it follows that $\mathcal{O}_{E_{K}}$ is isomorphic to the Cuntz algebra $\mathcal{O}_{\dim B}.$
\end{proof}
\end{proposition}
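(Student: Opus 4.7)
The plan is to apply \Cref{thm:universal_local_quantum_Cuntz--Krieger_algebra} together with \Cref{prop:explicit_local_relations} to realize $\cO_{E_K}$ as the universal unital C*-algebra generated by a local quantum Cuntz--Krieger $K(B,\psi)$-family, and then to exhibit inside it an honest Cuntz family of $n = \dim B$ isometries. Since for the complete quantum graph $A = \delta^2 \psi(\cdot)1_B$, the first step is to compute the matrix coefficients $A^{rsb}_{ija}$ in the adapted matrix unit basis. Using $\psi(f_{ij}^{(a)}) = \delta_{i=j}$ together with the expansion $1_B = \sum_{c,k}\psi(e_{kk}^{(c)})f_{kk}^{(c)}$, one finds
\[
A^{rsb}_{ija} = \delta_{i=j}\,\delta_{r=s}\,\delta^{2}\,\psi(e_{rr}^{(b)}).
\]

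Feeding these coefficients into the local relation \eqref{eqn:QCP2} collapses all but three Kronecker deltas, producing
\[
(s_{ij}^{(a)})^{*} s_{rs}^{(b)} \;=\; \delta_{\substack{a=b\\ i=r\\ j=s}}\;\frac{1}{\delta^{2}\psi(e_{ii}^{(a)})}\,\sum_{c,\ell,n}\delta^{2}\psi(e_{\ell\ell}^{(c)})\, s_{\ell n}^{(c)} (s_{\ell n}^{(c)})^{*}.
\]
The double sum is exactly $\delta^{2}$ times the left-hand side of the unit relation \eqref{eqn:QCP3}, so it simplifies to $1_D$. Rescaling via $T_{ij}^{(a)} := \delta\,\psi(e_{ii}^{(a)})^{1/2}\, s_{ij}^{(a)}$ then yields $(T_{ij}^{(a)})^{*} T_{rs}^{(b)} = \delta_{\substack{a=b\\ i=r\\ j=s}}\, 1_{D}$, while \eqref{eqn:QCP3} rewrites as $\sum_{c,\ell,m} T_{\ell m}^{(c)} (T_{\ell m}^{(c)})^{*} = 1_{D}$. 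Thus $\{T_{ij}^{(a)}\}$ is a Cuntz family of cardinality $\sum_{a} N_{a}^{2} = \dim B$, and by \Cref{thm:universal_local_quantum_Cuntz--Krieger_algebra} it generates $\cO_{E_{K}}$.

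By the universal property of the Cuntz algebra, this family induces a surjective $\ast$-homomorphism $\cO_{n} \twoheadrightarrow \cO_{E_{K}}$ with $n = \dim B$. For $n\geq 2$, simplicity of $\cO_{n}$ immediately upgrades this to an isomorphism. The edge case $n=1$ (i.e., $B = \bC$, so $\delta^2 = 1$) has to be treated directly: there $E_{K}$ is a one-dimensional faithful correspondence and $\cO_{E_{K}} \cong C(\bT) \cong \cO_{1}$ follows from standard Cuntz--Pimsner facts. The only real obstacle is the bookkeeping of the $\delta$ and $\psi(e_{ii}^{(a)})$ factors to verify that \eqref{eqn:QCP2} truly degenerates to the Cuntz orthogonality relation; conceptually, once $A$ is rank-one, the right-hand side of \eqref{eqn:QCP2} loses all nontrivial intertwining and is forced into the unit.
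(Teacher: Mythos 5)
Your proposal is correct and follows essentially the same route as the paper: compute the coefficients $A_{ija}^{rsb}$ for $A=\delta^2\psi(\cdot)1_B$, collapse relation \eqref{eqn:QCP2} via the unit relation \eqref{eqn:QCP3} to the Cuntz orthogonality relation, and rescale $s_{ij}^{(a)}$ by $\delta\,\psi(e_{ii}^{(a)})^{1/2}$ to obtain a generating Cuntz family of $\dim B$ isometries in $\cO_{E_K}$. The only (harmless) differences are cosmetic: you close the argument by invoking universality of $\cO_n$ plus simplicity and treat the degenerate case $n=1$ separately, whereas the paper concludes directly from the universal presentation by the local relations.
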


Here, we define a particular noncommutative Exel system $(B^{\otimes \bN}, \alpha, \mathcal{L})$ and show that the Exel crossed product $B^{\otimes \bN}\rtimes _{\alpha,\mathcal{L}}\bN$ is the Cuntz algebra on $\dim B$ generators. Define $\alpha:B^{\otimes \bN}\to B^{\otimes \bN}$ by \[\alpha(f):=1_B\otimes f \qquad (f\in B^{\otimes \bN}).\] Then $\mathcal{L}:B^{\otimes \bN} \to B^{\otimes \bN}$ given by \[\mathcal{L}(f_1\otimes g):=\psi(f_1)g, \qquad (f_1 \in B, g \in B^{\otimes \bN}),\] is a transfer operator for the dynamical system $(B^{\otimes \bN},\alpha)$. In the rest of this section, we let $1$ denote the identity in $B^{\otimes \bN}$ and continue to denote the identity in $B$ by $1_B$. For each $1\leq a \leq d$ and $1\leq i,j \leq N_a$, define $E_{ij}^{(a)}:=e_{ij}^{(a)}\otimes 1$, and let $S$ denote the operator which arises in the construction of $\cT(B^{\otimes \bN},\alpha,\mathcal{L})$. We will show that this particular Exel crossed product is $\mathcal{O}_{\dim B}$, generated by $\{v_{ij}^{(a)}:1\leq a\leq d, 1\leq i,j\leq N_a\}$, where $$v_{ij}^{(a)}:=\frac{1}{\psi(e_{jj}^{(a)})^{-1/2}}E_{ij}^{(a)}S.$$ 
The $v_{ij}^{(a)}$ should be regarded as an analogue of the characteristic function of words in $X_A$ that ``begin with the quantum letter $e_{ij}^{(a)} \in B$.''

\begin{lemma}\label{lem:Complete_graph_Exel_redundancies}
For all $1\leq a\leq d$, $1\leq i,j \leq N_a$, the pair $\left(E_{ij}^{(a)}, \sum_{p} v_{ip}^{(a)}{v_{jp}^{(a)}}^*\right)$ is a redundancy.
\end{lemma}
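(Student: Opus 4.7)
The plan is to verify the three defining properties of a redundancy. Membership $E_{ij}^{(a)}\in B^{\otimes \bN}$ is immediate from $E_{ij}^{(a)}=e_{ij}^{(a)}\otimes 1$. For the second requirement, taking adjoints yields $(v_{jp}^{(a)})^{*} = \psi(e_{pp}^{(a)})^{-1/2}\,S^{*}E_{pj}^{(a)}$ (using $(E_{jp}^{(a)})^{*} = E_{pj}^{(a)}$), so
\[
\sum_{p} v_{ip}^{(a)}(v_{jp}^{(a)})^{*} \;=\; \sum_{p} \psi(e_{pp}^{(a)})^{-1}\,E_{ip}^{(a)}SS^{*}E_{pj}^{(a)},
\]
which manifestly lies in $\overline{B^{\otimes \bN}\,SS^{*}\,B^{\otimes \bN}}$.

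The main content is the third condition, $E_{ij}^{(a)}gS = \sum_{p} v_{ip}^{(a)}(v_{jp}^{(a)})^{*}\,gS$ for every $g\in B^{\otimes \bN}$. My approach is to reduce this to a matrix-unit identity in $B$ by repeatedly applying the two defining Toeplitz relations $Sh=\alpha(h)S$ and $S^{*}hS=\cL(h)$ (for $h\in B^{\otimes \bN}$), together with the explicit formulas $\alpha(h)=1_{B}\otimes h$ and $\cL(h_{1}\otimes h')=\psi(h_{1})h'$. Since simple tensors $g = g_{1}\otimes g'$ with $g_{1}\in B$ and $g'\in B^{\otimes \bN}$ span a norm-dense subalgebra of $B^{\otimes \bN}$, it suffices to treat such $g$. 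For them, the Toeplitz relations give
\[
\sum_{p} v_{ip}^{(a)}(v_{jp}^{(a)})^{*}\,gS \;=\; \sum_{p} \psi(e_{pp}^{(a)})^{-1}\,E_{ip}^{(a)}\,S\,\cL(E_{pj}^{(a)} g) \;=\; \sum_{p} \psi(e_{pp}^{(a)})^{-1}\psi(e_{pj}^{(a)} g_{1})\,(e_{ip}^{(a)}\otimes g')\,S,
\]
whereas the left-hand side equals $(e_{ij}^{(a)}\,g_{1}\otimes g')\,S$.

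Stripping off the common tensor factor $g'$ and the trailing $S$, the redundancy reduces to the identity $\sum_{p} \psi(e_{pp}^{(a)})^{-1}\psi(e_{pj}^{(a)} g_{1})\,e_{ip}^{(a)} = e_{ij}^{(a)}\,g_{1}$ in $B$. To verify this I would use that $\psi$ restricted to $M_{N_{a}}(\bC)$ has diagonal density matrix $\rho_{a}$ in the basis $\{e_{rs}^{(a)}\}$, which gives $\psi(e_{pj}^{(a)} g_{1}) = \psi(e_{pp}^{(a)})(g_{1})_{jp}^{(a)}$; the normalization factors then cancel and the sum collapses to $e_{ij}^{(a)} g_{1}$ by ordinary matrix multiplication. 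Extension from simple tensors to arbitrary $g\in B^{\otimes \bN}$ is by linearity and continuity. The main obstacle I anticipate is simply the scalar bookkeeping: the factor $\psi(e_{jj}^{(a)})^{-1/2}$ in the definition of $v_{ij}^{(a)}$ is precisely what is needed to cancel the scalars emerging from the transfer operator $\cL$, so these factors must be tracked carefully through each application of the Toeplitz relations; once this is done the verification is a direct computation.
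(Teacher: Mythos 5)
Your proof is correct and takes essentially the same route as the paper: both verify the identity $E_{ij}^{(a)}gS=\sum_p v_{ip}^{(a)}(v_{jp}^{(a)})^*gS$ by splitting off the first tensor factor of $B^{\otimes \bN}$, applying the Toeplitz relations $Sh=\alpha(h)S$ and $S^*hS=\cL(h)$, and using the diagonality of the density matrices $\rho_a$ to collapse the sum over $p$, with the scalar factors $\psi(e_{pp}^{(a)})^{-1/2}$ cancelling exactly as you anticipate. The only inessential difference is that the paper specializes the first factor to matrix units $e_{k\ell}^{(b)}$ (noting the terms vanish unless $b=a$, $k=j$), whereas you keep a general $g_1\in B$ and reduce to the matrix identity $\sum_p\psi(e_{pp}^{(a)})^{-1}\psi(e_{pj}^{(a)}g_1)\,e_{ip}^{(a)}=e_{ij}^{(a)}g_1$.
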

\begin{proof}
Fix $a\in \{1,...,d\}$ and $i,j\in \{1,...,N_a\}$. For $e_{k\ell}^{(b)}\otimes f\in B^{\otimes \bN},$ note $v_{jp}^{(a)*}(e_{k\ell}^{(b)}\otimes f)=0$ unless $a=b$ and $j=k.$ Thus, it suffices to consider elements of the form $e_{j\ell}^{(a)}\otimes f$ in $B^{\otimes \bN}$. Observe:
\begin{align*}
    \psi(e_{pp}^{(a)})(v_{ip}^{(a)}{v_{jp}^{(a)*}})(e_{j\ell}^{(a)}\otimes f)S
    &=E_{ip}^{(a)}SS^*[E_{pj}^{(a)}(e_{j\ell}^{(a)}\otimes f)]S\\
    &=
    E_{ip}^{(a)}S[S^*(e_{p\ell}^{(a)}\otimes f)S]\\
    &=
    E_{ip}^{(a)}S\mathcal{L}(e_{p\ell}^{(a)}\otimes f)\\
    &=
    \psi(e_{p\ell}^{(a)})E_{ip}^{(a)}Sf\\
    &=
    \psi(e_{p\ell}^{(a)})E_{ip}^{(a)}\alpha(f)S\\
    &=
    \psi(e_{p\ell}^{(a)})E_{ip}^{(a)}(1_B\otimes f)S\\
    &=
    \psi(e_{pp}^{(a)})\delta_{p=\ell}(e_{i\ell}^{(a)}\otimes f)S\\
    &=
    \psi(e_{pp}^{(a)})\delta_{p=\ell}(e_{ij}^{(a)}e_{j\ell}^{(a)}\otimes f)S\\
    &= 
    \psi(e_{pp}^{(a)})\delta_{p=\ell}E_{ij}^{(a)}(e_{j\ell}^{(a)}\otimes f)S
\end{align*}
As elements of the form $e_{k\ell}^{(b)}\otimes f$ span $B^{\otimes \bN}$, for all $g\in B^{\otimes \bN},$ we have 
$$
\left(\sum_{p} v_{ip}^{(a)}{v_{jp}^{(a)}}^*\right)gS
=\sum_{p} \left(v_{ip}^{(a)}{v_{jp}^{(a)}}^*gS\right)
=\sum_p \left(\delta_{p\ell} E_{ij}^{(a)}gS\right)
=E_{ij}^{(a)}gS.
$$
By definition, $\left(E_{ij}^{(a)},\sum_{p} v_{ip}^{(a)}{v_{jp}^{(a)}}^*\right)$ is a redundancy.
\end{proof}

\begin{theorem}\label{thm:crossed-product-is-a-Cuntz-algebra}
The Exel crossed product $B^{\otimes \bN}\rtimes_{\alpha,\mathcal{L}}\mathbb{N}$ is isomorphic to the Cuntz algebra $\cO_n$ where $n=\dim B$.
\end{theorem}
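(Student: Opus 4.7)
The plan is to build a unital $\ast$-homomorphism $\Phi\colon\cO_n\to B^{\otimes\bN}\rtimes_{\alpha,\mathcal{L}}\bN$ by sending the $n=\dim B$ canonical generating Cuntz isometries of $\cO_n$ to the family $\{v_{ij}^{(a)}\}_{a,i,j}$, and then to deduce that $\Phi$ is an isomorphism using simplicity of $\cO_n$ together with a direct surjectivity argument. Throughout, I write $\pi$ for the natural $\ast$-homomorphism $B^{\otimes\bN}\to B^{\otimes\bN}\rtimes_{\alpha,\mathcal{L}}\bN$ and identify $\pi(f)$ with $f$.

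The first task is to verify the Cuntz relations for the $v_{ij}^{(a)}$. Mutual orthogonality $(v_{ij}^{(a)})^*v_{rs}^{(b)}=\delta_{a=b,\,i=r,\,j=s}\cdot 1$ follows from $S^*fS=\mathcal{L}(f)$ together with $\mathcal{L}(e_{js}^{(a)}\otimes 1)=\delta_{j=s}\psi(e_{jj}^{(a)})$. The completeness relation $\sum_{a,i,j}v_{ij}^{(a)}(v_{ij}^{(a)})^*=1$ is obtained by showing that $\bigl(1,\sum_{a,i,j}v_{ij}^{(a)}(v_{ij}^{(a)})^*\bigr)$ is a redundancy: applying the candidate sum to a spanning element $(e_{k\ell}^{(c)}\otimes h)S$ and reducing via $S^*(e_{j\ell}^{(c)}\otimes h)S=\mathcal{L}(e_{j\ell}^{(c)}\otimes h)=\delta_{j=\ell}\psi(e_{jj}^{(c)})h$, the only nonzero contributions come from $a=c$, $i=k$, $j=\ell$ and collapse to $(e_{k\ell}^{(c)}\otimes h)S$. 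These two families of relations let the universal property of $\cO_n$ produce $\Phi$, and simplicity of $\cO_n$ together with $\Phi(1)=1$ forces $\Phi$ to be injective.

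The remaining task is surjectivity. The image $\Phi(\cO_n)$ contains all of $B\otimes 1$ by \Cref{lem:Complete_graph_Exel_redundancies}, which gives $E_{ij}^{(a)}=\sum_p v_{ip}^{(a)}(v_{jp}^{(a)})^*$, and it contains $S=\sum_{a,i}\psi(e_{ii}^{(a)})^{1/2}v_{ii}^{(a)}$ (using $\sum_{a,i}E_{ii}^{(a)}=1_{B^{\otimes\bN}}$). The crucial remaining step is to show $\pi(B^{\otimes\bN})\subset\Phi(\cO_n)$. For this, I would establish the identity
\[
    \pi(\alpha(y))\;=\;\sum_{a,i,j}v_{ij}^{(a)}\,\pi(y)\,(v_{ij}^{(a)})^*\qquad\text{for all }y\in B^{\otimes\bN},
\]
which reduces to the pointwise computation $\pi(\alpha(y))\,v_{ij}^{(a)}=v_{ij}^{(a)}\,\pi(y)$ (using that $\alpha(y)=1_B\otimes y$ commutes with $E_{ij}^{(a)}$ and that $\alpha(y)S=Sy$), followed by summing against $\sum_{a,i,j}v_{ij}^{(a)}(v_{ij}^{(a)})^*=1$. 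Iterating this identity gives $\pi(\alpha^k(B))\subset\Phi(\cO_n)$ for every $k\geq 0$, and since $\bigcup_{k\geq 0}\alpha^k(B)$ generates $B^{\otimes\bN}$ as a $C^*$-algebra, it follows that $\pi(B^{\otimes\bN})\subset\Phi(\cO_n)$. Combined with $S\in\Phi(\cO_n)$ and the description of the Exel crossed product as $C^*$-generated by $\pi(B^{\otimes\bN})$ and $S$, this yields surjectivity. The main obstacle is establishing the displayed covariance-type identity; the remaining steps are routine matrix-unit computations and redundancy checks.
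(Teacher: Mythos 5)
Your proposal is correct and takes essentially the same approach as the paper: you work with the same isometries $v_{ij}^{(a)}=\psi(e_{jj}^{(a)})^{-1/2}E_{ij}^{(a)}S$, verify the Cuntz relations using $\mathcal{L}$ and a redundancy argument (your redundancy $(1,\sum_{a,i,j}v_{ij}^{(a)}(v_{ij}^{(a)})^*)$ is just the paper's Lemma~\ref{lem:Complete_graph_Exel_redundancies} summed over $a,i$), and then show that $B^{\otimes\bN}$ and $S$ lie in the resulting copy of $\cO_n$. The only minor difference is that where the paper invokes Exel's redundancy $(\alpha(f),SfS^*)$ to conclude $\alpha^{r}(E_{ij}^{(a)})=S^{r}E_{ij}^{(a)}(S^*)^{r}$, you derive the equivalent covariance identity $\pi(\alpha(y))=\sum_{a,i,j}v_{ij}^{(a)}\pi(y)(v_{ij}^{(a)})^*$ directly from $Sy=\alpha(y)S$ and the completeness relation, which is a perfectly valid substitute.
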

\begin{proof}
We first show that the C*-algebra generated by $\{v_{ij}^{(a)}:1\leq a\leq d, 1\leq i,j\leq N_a\}$ is $\mathcal{O}_n,$ where $n=\dim B.$ Observe that each $v_{ij}^{(a)}$ is an isometry in $B^{\otimes \bN}\rtimes_{\alpha,\mathcal{L}}\mathbb{N}$:
$$
    {v_{ij}^{(a)}}^*v_{ij}^{(a)}
    =\frac{1}{\psi(e_{jj}^{(a)})}S^*E_{ji}^{(a)}E_{ij}^{(a)}S
    =\frac{1}{\psi(e_{jj}^{(a)})}S^*E_{jj}^{(a)}S
    =\frac{1}{\psi(e_{jj}^{(a)})}\mathcal{L}\left(E_{jj}^{(a)}\right)
    =1.
$$
By Lemma~\ref{lem:Complete_graph_Exel_redundancies}, $\sum_p v_{ip}^{(a)}{v_{ip}^{(a)}}^*=E_{ii}^{(a)}$, and thus, summing over $1\leq a\leq d$ and $1\leq i\leq N_a$ yields the identity. Therefore, each $v_{ij}^{(a)}$ is a Cuntz isometry, and the collection $\{v_{ij}^{(a)}:1\leq a\leq d, 1\leq i,j\leq N_a\}$ generates the Cuntz algebra $\cO_n$ where $n=\dim B$.

To verify that this copy of $\cO_{\dim B}$ inside $B^{\otimes \bN}\rtimes_{\alpha,\cL}\bN$ is actually all of $B^{\otimes \bN}\rtimes_{\alpha,\cL}\bN$, note that each $E_{ij}^{(a)}$ belongs to $\cO_{\dim B}$ by Lemma~\ref{lem:Complete_graph_Exel_redundancies}. Further, since $S=1_BS=(\sum_{a,i} E_{ii}^{(a)})S$, we also know that $S$ belongs to this copy of $\cO_{\dim B}$. We claim that elements of the form $E_{ij}^{(a)}$ along with $S$ generate all of $B^{\otimes \bN}$. Recall from \cite{Exel03} that $(\alpha(f),SfS^*)$ is a redundancy for all $f\in B^{\otimes \bN}$, so for any $r\in \bN$, we have
$$
\underbrace{1_B\otimes ...\otimes 1_B}_{r} \otimes \, e_{ij}^{(a)} \otimes 1 
= \alpha^r(E_{ij}^{(a)})
=S^rE_{ij}^{(a)}(S^*)^r
\in \cO_{\dim B}.
$$
As elements of the above form generate the C*-algebra $B^{\otimes \bN}$ and are contained in $\cO_{\dim B}$, we may conclude $B^{\otimes \bN}$ is a subset of $\mathcal{O}_{\dim{B}}.$ Therefore, $B^{\otimes \bN}\rtimes_{\alpha,\cL}\bN \cong \cO_n$ where $n=\dim B.$
\end{proof}

As a result of Theorem~\ref{thm:crossed-product-is-a-Cuntz-algebra}, the Cuntz--Pimsner algebra $\cO_{E_K}$ for the quantum edge correspondence of the complete quantum graph can be realized as the Exel crossed product $B^{\otimes \bN}\rtimes_{\alpha,\cL}\bN$, whose underlying Exel system was a natural choice for the complete quantum graph. We may also conclude that the Cuntz algebra on $\dim B$ generators is always a quotient of the quantum Cuntz-Krieger algebra $\cO(K(B,\psi))$ for the complete quantum graph.


\subsection{Trivial quantum graphs}
Denote by $T(B,\psi)$ the {\it trivial quantum graph} $(B,\psi, \id) $; in the following discussion $B$ and $\psi$ will be fixed, so we will simply denote it by $T$. Using the adapted matrix units, we will now identify the edge correspondence and prove that it is isomorphic to the trivial $B$-correspondence, which will allow us to explicitly compute its Cuntz--Pimsner algebra. Recall the definition of the edge correspondence from \Cref{def:edgecorrespondence}. It is generated, as a $B$-bimodule by the idempotent $\epsilon_{T}:= \frac{1}{\delta^2} (\id \otimes \id)m^{\ast}(1)=\frac{1}{\delta^2} m^{\ast}(1)$. As $m^{\ast}$ is a $B$-bimodule map, we get $x\cdot \epsilon_{T}\cdot y = \frac{1}{\delta^2} m^{\ast}(xy)$, hence the edge correspondence is equal as a vector space to the image of $m^{\ast}$. We will actually show that the map $\frac{1}{\delta}m^{\ast}$ identifies $B$ and the edge correspondence $E_{T}$ as $C^{\ast}$-correspondences.
\begin{proposition}
Let $B$ be viewed as a $C^{\ast}$-correspondence, where $\langle a, b\rangle_B =a^{\ast}b$ and the left and right actions are the usual ones. Then $\frac{1}{\delta} m^{\ast}: B \to E_{T}$ is an isomorphism of $C^{\ast}$-correspondences.
\end{proposition}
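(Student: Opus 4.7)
The map $\frac{1}{\delta}m^{*}$ is automatically $B$-bilinear, since $m^{*}$ is a $B$-bimodule map (as recalled just before the proposition statement); and since the preceding discussion identifies $E_{T}$ with the vector-space image of $m^{*}$, the map is surjective onto $E_{T}$. Thus to conclude that $\frac{1}{\delta}m^{*}\colon B\to E_{T}$ is an isomorphism of C*-correspondences, what remains is to verify the inner-product identity
\[
    \left\langle \tfrac{1}{\delta}m^{*}(a),\,\tfrac{1}{\delta}m^{*}(b)\right\rangle_{B}=a^{*}b\qquad(a,b\in B),
\]
from which injectivity will follow for free by positive-definiteness of $\langle\cdot,\cdot\rangle_{B}$ on $B$.

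To streamline this, I would first observe that the $B$-valued inner product on $B\otimes_{\psi}B$ can be rewritten in $*$-algebraic form. A direct check on elementary tensors shows $\langle a\otimes b,\,c\otimes d\rangle_{B}=b^{*}\psi(a^{*}c)d=(\psi\otimes\mathrm{id})\big((a\otimes b)^{*}(c\otimes d)\big)$, so for arbitrary $\xi,\eta\in B\otimes B$ we have $\langle\xi,\eta\rangle_{B}=(\psi\otimes\mathrm{id})(\xi^{*}\eta)$, where the product and involution are those of the $*$-algebra $B\otimes B$. The desired claim is thereby reduced to the single identity
\[
    (\psi\otimes\mathrm{id})\big(m^{*}(a)^{*}\,m^{*}(b)\big)=\delta^{2}\,a^{*}b.
\]

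I would prove this last identity by expanding in the adapted matrix units $\{f_{ij}^{(a)}\}$. Applying \eqref{Eq:adaptedunits} to write $m^{*}(f_{ij}^{(a)})=\sum_{k}f_{ik}^{(a)}\otimes f_{kj}^{(a)}$, taking the adjoint componentwise, multiplying in $B\otimes B$ using \eqref{Eq:prodadaptedunits} on each tensor leg (which forces $a=b$, $i=r$, and eventually $k=\ell$), and finally applying $\psi\otimes\mathrm{id}$, everything collapses to a single sum on the free middle index. The only non-trivial numerical input is the $\delta$-form condition $\sum_{k}\psi(e_{kk}^{(a)})^{-1}=\delta^{2}$, which accounts for the factor of $\delta^{2}$; the end result is $\delta^{2}\delta_{a=b}\delta_{i=r}\psi(e_{ii}^{(a)})^{-1}f_{js}^{(a)}=\delta^{2}(f_{ij}^{(a)})^{*}f_{rs}^{(b)}$, and sesquilinearity extends the identity to arbitrary $a,b\in B$. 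The main obstacle here is purely bookkeeping with double sums and Kronecker deltas; there is no conceptual subtlety beyond recognizing that the $\delta$-form hypothesis is exactly what is required to make the matrix-unit sums close up.
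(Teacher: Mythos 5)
Your argument is correct, but it takes a different route from the paper. The paper's proof is a one-line application of the recognition result, \Cref{prop:recognition}: the vector $\xi:=\tfrac{1}{\delta}1$ is cyclic for the trivial correspondence $B$, and $\delta^{2}\langle\xi,x\cdot\xi\rangle_{B}=x$ is exactly the quantum adjacency matrix $\id$ of $T(B,\psi)$, so the bimodular extension of $\xi\mapsto\epsilon_{T}$ is an isomorphism onto $E_{T}$, and one observes that this map is $\tfrac{1}{\delta}m^{*}$. You instead verify everything by hand: bimodularity and surjectivity from the fact that $m^{*}$ is a bimodule map with image $E_{T}$, and then the inner-product identity $(\psi\otimes\id)\bigl(m^{*}(x)^{*}m^{*}(y)\bigr)=\delta^{2}x^{*}y$ via the adapted matrix units \eqref{Eq:adaptedunits}, \eqref{Eq:prodadaptedunits} and the $\delta$-form condition $\sum_{k}\psi(e_{kk}^{(a)})^{-1}=\delta^{2}$; I checked the matrix-unit computation and it closes up exactly as you describe, and injectivity does follow from positive-definiteness. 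What your route buys is self-containedness: you never need the recognition machinery, only the two matrix-unit identities. What it costs is that you are essentially reproving a special case of results already in the paper: your key identity is \Cref{thm:cp_Adjacency_matrices_are_inner_products} with $A=\id$ (since $x\cdot\epsilon_{T}=\tfrac{1}{\delta^{2}}m^{*}(x)$, that theorem gives $\langle m^{*}(x),m^{*}(y)\rangle_{B}=\delta^{2}\,x^{*}y$ immediately), so citing it would replace your entire matrix-unit computation. One small notational caution: you use $a,b$ both for elements of $B$ and for block indices of the matrix units in the same breath; harmless here, but worth cleaning up.
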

\begin{proof}
We will use \Cref{prop:recognition}. The vector $\xi:=\frac{1}{\delta}1$ is cyclic for $B$ and $\delta^2 \langle \xi, x\cdot \xi\rangle = x$ is the quantum adjacency matrix of the trivial quantum graph. Therefore by \Cref{prop:recognition} the assignment $\xi \mapsto \epsilon_{T}$ extends to an isomorphism of correspondences $B$ and $E_{T}$. This map is precisely equal to $\frac{1}{\delta}m^{\ast}$.
\end{proof}
\begin{corollary}\label{cor:triv}
The Cuntz--Pimsner algebra $\cO_{E_{T}}$ is isomorphic to $B\otimes C(\bT)$.
\end{corollary}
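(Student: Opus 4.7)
The plan is to use the preceding proposition to reduce the problem: since $\tfrac{1}{\delta}m^{\ast}\colon B\to E_T$ is an isomorphism of $C^{\ast}$-correspondences, it suffices to identify the Cuntz--Pimsner algebra of the \emph{identity correspondence} $X=B$ (with $\langle a,b\rangle_B=a^{\ast}b$ and both left and right actions by multiplication). Because $B$ is finite dimensional and unital, the left action $\varphi_X(x)=L_x$ is injective, so $X$ is faithful and $\mathcal{K}(X)=\mathcal{L}(X)$; thus the Katsura ideal satisfies $J_X=B$, and $\pi_X$ is forced to be unital.

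Next I would analyze a generic covariant representation $(\pi,t)$ of $X$ on a unital $C^{\ast}$-algebra $D$ and show that it is completely determined by the element $u:=t(1_B)$. From (ii) one has $u^{\ast}u=\pi(\langle 1,1\rangle_B)=\pi(1)=1_D$. Using $t(x)=t(1\cdot x)$ one computes $t(x)=\pi(x)u$, and comparing with $t(\xi\cdot x)=t(\xi)\pi(x)$ for $\xi=1$ yields $u\pi(x)=\pi(x)u$ for every $x\in B$. Finally, since $L_x=\theta_{x,1}\in\mathcal{K}(X)$, covariance gives
\[
\pi(x)=\psi_t(L_x)=t(x)t(1)^{\ast}=\pi(x)\,uu^{\ast},
\]
so taking $x=1$ yields $uu^{\ast}=1_D$, and $u$ is a unitary commuting with $\pi(B)$. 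Conversely, given any unital $\ast$-homomorphism $\pi\colon B\to D$ and a unitary $u\in D$ commuting with $\pi(B)$, the formula $t(\xi):=\pi(\xi)u$ defines a covariant representation (the three checks are immediate from $u^{\ast}u=uu^{\ast}=1$ and $[u,\pi(B)]=0$).

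Consequently, $\mathcal{O}_{E_T}\cong\mathcal{O}_X$ is the universal unital $C^{\ast}$-algebra generated by a copy of $B$ together with a unitary commuting with $B$, which is precisely $B\otimes C(\mathbb{T})$ under $\pi_X(b)\mapsto b\otimes 1$ and $u\mapsto 1\otimes z$. To turn this into a rigorous isomorphism, I would exhibit the covariant representation of $X$ on $D=B\otimes C(\mathbb{T})$ given by $\pi(b)=b\otimes 1$ and $t(\xi)=\xi\otimes z$ and invoke the universal property to produce a surjection $\rho\colon\mathcal{O}_X\to B\otimes C(\mathbb{T})$. Injectivity is a routine application of the gauge-invariant uniqueness theorem \cite[Theorem 6.4]{Kat04}: $\rho$ restricts to the identity on the faithful copy of $B$, and the pointwise rotation $1\otimes z\mapsto 1\otimes\lambda z$ on $B\otimes C(\mathbb{T})$ intertwines with the canonical gauge action on $\mathcal{O}_X$. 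No real obstacle is expected; the only care required is to trace the normalization $\tfrac{1}{\delta}m^{\ast}$ from the preceding proposition, which affects the scaling of $u$ but not its unitarity.
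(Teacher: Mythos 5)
Your proof is correct, and its first step is exactly the paper's: use the preceding proposition to replace $E_T$ by the identity correspondence $X=B$. Where you diverge is in the second step: the paper simply invokes Pimsner's computation (\cite[Example 3, p.~193]{Pimsner}) that the Cuntz--Pimsner algebra of the identity correspondence is the crossed product of $B$ by the trivial $\mathbb{Z}$-action, i.e.\ $B\otimes C(\bT)$, whereas you prove this identification from scratch: covariant representations of $X$ are in bijection with pairs consisting of a representation $\pi$ of $B$ and a unitary $u=t(1_B)$ commuting with $\pi(B)$ (your computations $u^{\ast}u=\pi(1)$, $t(x)=\pi(x)u=u\pi(x)$, and $uu^{\ast}=\psi_t(\theta_{1,1})=\pi(1)$ are all right), and then the explicit covariant representation $\pi(b)=b\otimes 1$, $t(\xi)=\xi\otimes z$ on $B\otimes C(\bT)$ together with the gauge-invariant uniqueness theorem yields the isomorphism. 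Your route buys self-containedness and makes the unitary generator visible, at the cost of length; the paper's is a one-line citation. Two small points of hygiene: $\pi(1)$ is the unit of $C^{\ast}(\pi(B),t(X))$ rather than of $D$ (so ``$u$ is unitary'' should be read in that subalgebra, which is all the universal property requires), and no $\delta$-scaling actually needs to be traced, since the isomorphism $\tfrac{1}{\delta}m^{\ast}$ is inner-product preserving and bimodular, so $\cO_{E_T}\cong\cO_X$ canonically with no rescaling of $u$. Neither affects correctness.
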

\begin{proof}
From the previous proposition, $E_T$ is isomorphic to $B$. By \cite[Example 3 p. 193]{Pimsner}, the Cuntz--Pimsner algebra of $B$ is isomorphic to the crossed of $B$ by the trivial action of $\mathbb{Z}$, i.e. $B\otimes C(\bT)$.
\end{proof}

Consider the trivial Exel system $(B,\alpha,\mathcal{L})$ where $\alpha=\mathcal{L}=\id$. Recall that $\mathcal{T}(B,\alpha,\mathcal{L})$ is the universal C*-algebra generated by $B$ and an element $U$ subject to the relations $Ux=\alpha(x)U$ and $U^*xU=\mathcal{L}(x)$ for all $x\in B.$ In this case, $Ux=xU$, which implies that $U$ is an isometry and commutes with every element of $B$.

\begin{proposition}
The Exel crossed product $B\rtimes_{\id,\id} \bN$ is isomorphic to $B\otimes C(\bT)$
\end{proposition}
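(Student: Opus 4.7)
The strategy is to identify both algebras with the universal unital C*-algebra generated by a copy of $B$ together with a unitary that commutes with $B$. Since $C(\bT)$ is nuclear, the algebra $B \otimes C(\bT) \cong B \otimes_{\max} C^*(\bZ)$ has precisely this universal property: for any unital C*-algebra $D$, unital $*$-homomorphism $\pi \colon B \to D$, and unitary $u \in D$ commuting with $\pi(B)$, there is a unique $*$-homomorphism $B\otimes C(\bT) \to D$ sending $b \otimes 1 \mapsto \pi(b)$ and $1 \otimes z \mapsto u$, where $z$ denotes the generating unitary of $C(\bT)$.

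First I would analyze the Toeplitz algebra $\cT(B,\id,\id)$: the defining relations $Ub = bU$ and $U^*bU = b$ force $U^*U = 1_B$, so $U$ is an isometry commuting with $B$, as is noted already in the excerpt. The central step is to exhibit the redundancy $(1_B, UU^*)$. Indeed, $UU^* \in \overline{B\cdot UU^*\cdot B}$ tautologically, and for every $g \in B$,
\[
    UU^* \cdot g \cdot U = g U U^* U = gU = 1_B \cdot g \cdot U,
\]
using that $U$ commutes with $B$ and $U^*U = 1_B$. Hence in $B \rtimes_{\id,\id}\bN$ one has $UU^* = 1$, so that the image of $U$ is a unitary commuting with the image of $B$.

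To finish I would construct mutually inverse $*$-homomorphisms. Applying the universal property of $B \otimes C(\bT)$ with $D = B\rtimes_{\id,\id}\bN$, $\pi$ the canonical embedding of $B$, and $u$ the image of $U$, gives a $*$-homomorphism $\Phi \colon B\otimes C(\bT) \to B\rtimes_{\id,\id}\bN$ with $\Phi(b\otimes 1) = b$ and $\Phi(1\otimes z) = u$. Conversely, in $B \otimes C(\bT)$ the element $1\otimes z$ is a unitary commuting with $B \otimes 1$, so the pair $(b \mapsto b\otimes 1,\ 1\otimes z)$ satisfies the Toeplitz relations $Ub=bU$ and $U^*bU=b$, defining a $*$-homomorphism $\widetilde{\Psi} \colon \cT(B,\id,\id) \to B\otimes C(\bT)$. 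I would then check that $\widetilde{\Psi}$ descends to the crossed product: the image of $\overline{BUU^*B}$ lies in $B \otimes 1$, so any redundancy $(f,k)$ maps to a pair $(f\otimes 1, c\otimes 1)$ with $c\in B$, and the Toeplitz relation $fgU = kgU$ becomes $fg \otimes z = cg \otimes z$ in $B\otimes C(\bT)$; cancelling $z$ and setting $g = 1_B$ yields $f = c$, so $f - k$ maps to zero. This produces $\Psi \colon B \rtimes_{\id,\id}\bN \to B \otimes C(\bT)$, and $\Phi,\Psi$ are mutually inverse on the generating sets $\{b : b \in B\} \cup \{u\}$ and $\{b\otimes 1 : b \in B\} \cup \{1 \otimes z\}$.

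The entire argument hinges on the single observation that $U$ commutes with $B$, which both supplies the redundancy $(1_B, UU^*)$ and makes the two universal descriptions coincide; there is no substantive obstacle, as the fully trivial nature of the Exel system $(B, \id, \id)$ makes everything reduce to a short universal-property argument.
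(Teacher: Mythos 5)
Your proof is correct and takes essentially the same route as the paper: the central step in both is the redundancy $(1_B, UU^*)$, derived from $U$ being an isometry commuting with $B$, which forces the image of $U$ in the crossed product to be a unitary commuting with $B$. You additionally spell out the concluding identification with $B\otimes C(\bT)$ via explicit mutually inverse $*$-homomorphisms (using the universal properties of the tensor product and of the Toeplitz algebra), a step the paper's proof states only briefly.
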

\begin{proof}
Recall that $B\rtimes_{\id,\id} \bN$ is the quotient of $\mathcal{T}(B,\id,\id)$ by the ideal of generated by the set of redundancies, $\{a-k:a\in B,\,k\in \overline{BUU^*B},\,agU=kgU\,\,\forall g\in B\}.$ Consider $k=UU^*$ and $a=1$. Then because $U$ is isometric and commutes with all elements of $B$, we have $UU^*gU=Ug=gU$ for all $g\in B$. Therefore $UU^*$ and $1$ form a redundancy, so the quotient consists of a copy of $B$ and a unitary that commutes with $B$. Therefore, $B\rtimes_{\id, \id}\bN \cong B\otimes C(\bT)$.
\end{proof}

\begin{remark}
Note that the above proposition provides a second example of the Cuntz--Pimsner algebra for the quantum edge correspondence being implemented by a natural choice of Exel crossed product.
\end{remark}

\subsection{Rank-one quantum graphs}
In the special case when $B=M_n(\mathbb{C})$ and $\psi$ is a properly normalized trace, there is a nice correspondence between quantum adjacency matrices, projections in $M_n(\mathbb{C}) \otimes M_n(\mathbb{C})^{\op}$, and subspaces of $M_n(\mathbb{C})$. Just as before, a projection in $M_n(\mathbb{C}) \otimes M_n(\mathbb{C})^{\op}$ can be viewed as the Choi matrix of a quantum adjacency matrix, but also as an orthogonal projection onto a subspace of $M_n(\mathbb{C})$, when we identify the algebra of bounded operators on $M_n(\mathbb{C})$ (viewed as a Hilbert space) with $M_n(\mathbb{C}) \otimes M_n(\mathbb{C})^{\op}$ via the left-right action. 

For the trivial quantum graph, the corresponding subspace of $M_n(\mathbb{C})$ is the span of the identity matrix. It is therefore tempting to investigate other rank one subspaces. If such a subspace is spanned by a (suitably normalized) operator $T$, then the corresponding quantum adjacency matrix is equal to $A(x):= TxT^{\ast}$. This is the inspiration for the following definition.  Recall that the density matrices  $\{\rho_a:1\leq a\leq d\}$ associated to a $\delta$-form $\psi$ on $B$ are invertible and satisfy $\Tr(\rho_{a}^{-1})=\delta^2$.

\begin{definition}
 Let $T \in B$ satisfy $\Tr (\rho_a^{-1} T^{\ast}T) = \delta^2$ for all $1\leq a\leq d$. We call $A(x):= TxT^{\ast}$ a quantum adjacency matrix of \emph{rank one}.
\end{definition}

Before we proceed any further, we should first check that rank one quantum adjacency matrices are in fact quantum adjacency matrices. Because one can check the conditions separately on each matrix direct summand, we may assume that $B$ is a matrix algebra, so we can forget about the index $a$.
\begin{lemma}\label{lem:rankone}
If $T \in M_n(\mathbb{C})$ satisfies $\Tr (\rho^{-1} T^{\ast}T)=\delta^2$, where $\rho$ is the density matrix of a $\delta$-form $\psi$ then $A(x):=TxT^{\ast}$ is a quantum adjacency matrix.
\end{lemma}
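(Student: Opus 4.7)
The plan is to verify the defining identity $m(A\otimes A)m^{\ast}=\delta^{2}A$ directly by evaluating both sides on the adapted matrix units. Since $B=M_n(\mathbb{C})$ and both sides are linear, it suffices to check the identity on $f_{ij}$ for $1\le i,j\le n$, using the basic formula $m^{\ast}(f_{ij})=\sum_{k}f_{ik}\otimes f_{kj}$ from \cref{Eq:adaptedunits}. The only ingredient beyond this is the normalization hypothesis $\Tr(\rho^{-1}T^{\ast}T)=\delta^{2}$.

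First I would write
\[
m(A\otimes A)m^{\ast}(f_{ij})=\sum_{k}T f_{ik}T^{\ast}\,T f_{kj}T^{\ast}=T\Bigl(\sum_{k}f_{ik}(T^{\ast}T)f_{kj}\Bigr)T^{\ast}.
\]
Next, unfolding $f_{ik}=e_{ik}/\sqrt{\rho_{i}\rho_{k}}$ and $f_{kj}=e_{kj}/\sqrt{\rho_{k}\rho_{j}}$, and using the elementary ``collapse'' $e_{ik}Se_{kj}=S_{kk}e_{ij}$ for any $S\in M_n(\mathbb{C})$, the inner sum becomes
\[
\sum_{k}f_{ik}(T^{\ast}T)f_{kj}=\frac{1}{\sqrt{\rho_{i}\rho_{j}}}\sum_{k}\frac{(T^{\ast}T)_{kk}}{\rho_{k}}\,e_{ij}=\Tr(\rho^{-1}T^{\ast}T)\,f_{ij}.
\]

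Invoking the hypothesis $\Tr(\rho^{-1}T^{\ast}T)=\delta^{2}$, I would conclude $m(A\otimes A)m^{\ast}(f_{ij})=\delta^{2}\,T f_{ij}T^{\ast}=\delta^{2}A(f_{ij})$, which proves the claim. The argument has no real obstacle: the only substantive step is recognizing that the middle factor $\sum_{k}\rho_{k}^{-1}(T^{\ast}T)_{kk}$ is exactly the weighted trace $\Tr(\rho^{-1}T^{\ast}T)$, so the normalization condition is precisely what is needed to turn the Schur-idempotent identity into an equality.
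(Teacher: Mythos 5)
Your proof is correct and follows essentially the same route as the paper: both verify $m(A\otimes A)m^{\ast}(f_{ij})=\delta^{2}A(f_{ij})$ on adapted matrix units and reduce it to the identity $\sum_{k}f_{ik}\,S\,f_{kj}=\Tr(\rho^{-1}S)\,f_{ij}$ with $S=T^{\ast}T$, which is exactly where the normalization hypothesis enters. The only cosmetic difference is that you expand in elementary matrix units $e_{ij}$ while the paper works with the product rule for the adapted units $f_{ij}$.
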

\begin{proof}
We have to check that $m(A\otimes A)m^{\ast}(f_{ij}) = \delta^2 A(f_{ij})$ for all adapted matrix units $f_{ij}$. The left-hand side is equal to $\sum_{k} T f_{ik}T^{\ast}T f_{kj} T^{\ast}$. We will check that $\sum_{k} f_{ik} S f_{kj} = \Tr (\rho^{-1} S) f_{ij}$ and this will end the proof. First, we can write the matrix $S$ as $S = \sum_{pt} S_{pt} e_{pt} = \sum_{pt} S_{pt}\sqrt{\psi(e_{pp})\psi(e_{tt})} f_{pt}$. We have $f_{ik} f_{pt} f_{kj} = \delta_{pk}\delta_{tk} \frac{1}{\psi(e_{kk})^2} f_{ij}$, so $f_{ik} S f_{kj} = \frac{S_{kk}}{\psi(e_{kk})} f_{ij}$. We obtain
    \begin{align*}
        \sum_{k} f_{ik}S f_{kj} = \left(\sum_{k} \frac{S_{kk}}{\psi(e_{kk})}\right) f_{ij} = \Tr(\rho^{-1}S) f_{ij}. 
    \end{align*}
\end{proof}
\begin{proposition}\label{prop:triv}
Let $A$ be a rank one quantum adjacency matrix, given by $A(x)=TxT^{\ast}$. Recall that $T$ is of the form $\bigoplus_{a} T^{(a)}$ and we assume that each $T^{(a)}\neq 0$. Then the Cuntz--Pimsner algebra of the edge correspondence is isomorphic to $B\otimes C(\bT)$.
\end{proposition}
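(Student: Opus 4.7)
The plan is to identify $E_{\mathcal{G}}$ with the trivial $B$-correspondence (i.e., $B$ regarded as a right Hilbert $B$-module over itself with inner product $\langle a, b\rangle_B = a^*b$ and left action by multiplication), and then invoke the same Pimsner computation used in the proof of \Cref{cor:triv} to conclude $\cO_{E_{\mathcal{G}}} \cong B \otimes C(\bT)$. The identification will come from \Cref{prop:recognition} applied to the candidate cyclic vector $\xi := \delta^{-1} T^* \in B$.

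First I would verify that $\xi$ is cyclic for the trivial correspondence, i.e., $B \cdot \xi \cdot B = B$. Writing $B = \bigoplus_{a=1}^{d} M_{N_a}(\bC)$ and $T^* = \bigoplus_a (T^{(a)})^*$ with each $T^{(a)} \neq 0$ by hypothesis, the subspace $M_{N_a}(\bC) \cdot (T^{(a)})^* \cdot M_{N_a}(\bC)$ is a nonzero two-sided ideal of the simple algebra $M_{N_a}(\bC)$ and hence equals $M_{N_a}(\bC)$; summing over $a$ gives $B \cdot T^* \cdot B = B$. Next, a direct computation yields
\[
    \delta^2 \langle \xi, x \cdot \xi \rangle_B = \delta^2 \xi^* x \xi = T x T^* = A(x) \qquad (x \in B),
\]
so the map associated to $\xi$ is exactly the rank-one quantum adjacency matrix $A$, which is a quantum adjacency matrix by \Cref{lem:rankone}. \Cref{prop:recognition} then produces an isomorphism of $C^*$-correspondences between $B$ (with its trivial correspondence structure) and $E_{\mathcal{G}}$.

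Finally, by \cite[Example 3 p. 193]{Pimsner} the Cuntz--Pimsner algebra of the trivial $B$-correspondence is isomorphic to $B \otimes C(\bT)$ (it is the crossed product of $B$ by the trivial $\bZ$-action), so composing with the isomorphism from the previous step yields $\cO_{E_{\mathcal{G}}} \cong B \otimes C(\bT)$. The only non-routine point in the argument is the cyclicity of $\xi$, which is precisely what fails when some $T^{(a)}$ vanishes (in that case a whole central summand drops out of $B \cdot \xi \cdot B$); once cyclicity is in hand, the proof is a clean application of the recognition principle and Pimsner's example.
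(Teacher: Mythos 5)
Your proposal is correct and follows essentially the same route as the paper: the same cyclic vector $\xi = \delta^{-1}T^*$, the same appeal to \Cref{prop:recognition} to identify $E_\cG$ with the trivial correspondence $B$, and the same use of Pimsner's example (as in \Cref{cor:triv}) to conclude $\cO_{E_\cG}\cong B\otimes C(\bT)$. The only difference is that you spell out the cyclicity of $\xi$ via simplicity of the matrix blocks, which the paper leaves implicit.
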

\begin{proof}
We will once again resort to \Cref{prop:recognition} to show that the edge correspondence is isomorphic to the trivial correspondence $B$. By our assumption on $T$, the element $\xi:=\frac{1}{\delta}T^{\ast}$ is cyclic for the trivial correspondence and we have $\delta^2 \langle \xi, x\xi\rangle = TxT^{\ast} = A(x)$. Therefore by \Cref{prop:recognition} the edge correspondence is isomorphic to the trivial correspondence by a bimodular extension of the assignment $\xi \mapsto \epsilon$. We conclude as in the proof of \Cref{cor:triv} that the corresponding Cuntz--Pimsner algebra is isomorphic to $B\otimes C(\bT)$.
\end{proof}
\begin{remark}
If it happens that $T^{(a)}=0$ for some $a$'s, then the same proof shows that the Cuntz--Pimsner algebra is isomorphic to $B' \otimes C(\bT)$, where $B'$ is the direct sum of matrix algebras, but only over the $a$'s, for which $T^{(a)}\neq 0$.
\end{remark}

\subsection{Quantum adjacency matrices arising from $*$-automorphisms} 
Another way to generalize the trivial quantum graph is to let $\alpha$ be a $*$-homomorphism of a finite dimensional C*-algebra $B$. As $\alpha$ is multiplicative, we have 
$
\alpha\circ m 
= m\circ (\alpha\otimes \alpha)
$. 
It follows that 
$
m(\alpha \otimes \alpha)m^{\ast} 
= 
\alpha mm^{\ast} 
= \delta^2 \alpha
$, i.e. $\alpha$ is a quantum adjacency matrix. Then $\cG_\alpha:=(B,\psi,\alpha)$ is a quantum graph, and the associated quantum edge indicator for $\cG_\alpha$ is 
\begin{align*}
    \epsilon_{\alpha}
:=\frac{1}{\delta^2}(\id\otimes \alpha)m^*(1)
    =\frac{1}{\delta^2}\sum_{aij}\psi(e_{ii}^{(a)})f_{ij}^{(a)}\otimes \alpha(f_{ji}^{(a)}).
\end{align*}

Let $E_{\alpha} = B \cdot \epsilon_\alpha \cdot B$ denote the quantum edge correspondence for $\cG_\alpha$, and define $B_{\alpha}$ to be $B$ as a C*-correspondence over itself with $\langle a,b\rangle=a^*b$, left action given by $x\cdot \eta:= \alpha(x) \eta$, and right action given by right multiplication.
\begin{lemma}
$E_\alpha\cong B_{\alpha}$ as C$^\ast$-correspondences.
\end{lemma}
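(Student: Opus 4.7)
The plan is to apply the recognition theorem (\Cref{prop:recognition}) directly, by exhibiting a cyclic vector in $B_{\alpha}$ whose associated inner-product map recovers $\alpha$. The natural candidate is $\xi := \frac{1}{\delta} 1_B \in B_\alpha$. Assuming $\alpha$ is unital (as the subsection heading suggests, and which is forced if $\alpha$ is a $*$-automorphism), for any $y\in B$ we have $1_B\cdot \xi\cdot y = \alpha(1_B)\cdot \tfrac{1}{\delta}\cdot y = \tfrac{1}{\delta} y$, so $B\cdot \xi \cdot B = B_\alpha$ and $\xi$ is cyclic.

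Next, I would verify the inner-product condition. Using the definition of the $B$-valued inner product on $B_\alpha$ together with the prescribed left action, one has
\[
    \delta^{2}\langle \xi, x\cdot \xi\rangle_{B_\alpha} = \delta^{2}\cdot \tfrac{1}{\delta^2}\, 1_B^{\ast}\,\alpha(x)\,1_B = \alpha(x),
\]
for every $x\in B$. Since it was already established in the paragraph preceding the lemma that $\alpha$ is a quantum adjacency matrix for $(B,\psi)$ (indeed, $m(\alpha\otimes\alpha)m^{\ast} = \alpha m m^{\ast} = \delta^{2}\alpha$), the hypotheses of \Cref{prop:recognition} are satisfied with this choice of $\xi$.

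Invoking \Cref{prop:recognition} then produces a C*-correspondence isomorphism $B_\alpha \to E_\alpha$ determined by $x\cdot \xi\cdot y \mapsto x\cdot \epsilon_\alpha\cdot y$; equivalently, $\tfrac{1}{\delta}\alpha(x)y \mapsto x\cdot \epsilon_\alpha\cdot y$. There is essentially no obstacle: the only subtle point is cyclicity, which requires $\alpha(1_B)=1_B$. If one wishes to allow non-unital $*$-homomorphisms $\alpha$ (so that $p:=\alpha(1_B)$ is a central projection of $B$), a minor modification is needed: the same argument identifies $E_\alpha$ with the corner correspondence obtained by replacing the trivial left action on $B$ with $x\cdot \eta = \alpha(x)\eta$, whose underlying module is $pB$; under the stated unitality hypothesis this reduces to $B_\alpha$ as defined.
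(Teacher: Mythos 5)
Your proposal is correct and follows essentially the same route as the paper: both take $\xi=\tfrac{1}{\delta}1_B$ in $B_\alpha$, note $\delta^2\langle\xi,x\cdot\xi\rangle=\alpha(x)$, and invoke \Cref{prop:recognition}. Your extra remark on cyclicity requiring $\alpha(1_B)=1_B$ is a reasonable clarification but does not change the argument.
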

\begin{proof}
The element $\xi:= \frac{1}{\delta} 1$ is cyclic for $B_{\alpha}$ and $\delta^2 \langle \xi, x\cdot \xi\rangle = \alpha(x)$. By \Cref{prop:recognition} the edge correspondence is isomorphic to $B_{\alpha}$.
\end{proof}
\begin{corollary}\label{cor:triv2}
When $\alpha$ is a $*$-automorphism, the Cuntz--Pimsner algebra of $E_\alpha$ is isomorphic to $B \rtimes_\alpha \mathbb{Z}$.
\end{corollary}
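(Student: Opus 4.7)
The plan is to combine the preceding lemma with the classical identification of the Cuntz--Pimsner algebra of a correspondence coming from a $*$-automorphism as the corresponding crossed product. By the preceding lemma, $E_\alpha \cong B_\alpha$ as C*-correspondences, so it suffices to show $\mathcal{O}_{B_\alpha} \cong B \rtimes_\alpha \mathbb{Z}$. This is essentially the content of \cite[Example 3, p.~193]{Pimsner}, which was already invoked in the proof of \Cref{cor:triv} (the trivial quantum graph is the special case $\alpha = \mathrm{id}$). For completeness, I would include a brief construction of the isomorphism rather than merely cite.

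Let $u \in B \rtimes_\alpha \mathbb{Z}$ denote the canonical unitary implementing $\alpha$, so that $u x u^* = \alpha(x)$ for all $x \in B$. Let $\pi \colon B \to B \rtimes_\alpha \mathbb{Z}$ be the canonical inclusion and define $t \colon B_\alpha \to B \rtimes_\alpha \mathbb{Z}$ by $t(\eta) = u^* \eta$. A direct calculation verifies that $(\pi,t)$ is a representation: the identity $\pi(x)t(\eta) = x u^* \eta = u^* \alpha(x) \eta = t(x\cdot \eta)$ follows from $u x u^* = \alpha(x)$, the right-module compatibility $t(\eta) \pi(y) = u^*\eta y = t(\eta \cdot y)$ is immediate, and the inner product identity $t(\eta_1)^* t(\eta_2) = \eta_1^* u u^* \eta_2 = \pi(\langle \eta_1,\eta_2\rangle_B)$ holds since $u$ is unitary. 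Because $\alpha$ is a $*$-automorphism, $B_\alpha$ is faithful and finite dimensional, so the Katsura ideal $J_{B_\alpha}$ equals $B$; covariance is then easily checked by noting that $\varphi_{B_\alpha}(x) = \theta_{\alpha(x),1}$, so $\psi_t(\varphi_{B_\alpha}(x)) = t(\alpha(x)) t(1)^* = u^* \alpha(x) u = x = \pi(x)$.

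The universal property of $\mathcal{O}_{B_\alpha}$ then yields a $*$-homomorphism $\rho \colon \mathcal{O}_{B_\alpha} \to B \rtimes_\alpha \mathbb{Z}$. This map is surjective since its image contains $\pi(B) = B$ and $t(1) = u^*$, and these generate the crossed product. For injectivity I would appeal to the gauge-invariant uniqueness theorem (\cite[Theorem 6.4]{Kat04}, as invoked in the proof of the preceding theorem on quantum isomorphisms): the gauge action on $\mathcal{O}_{B_\alpha}$ corresponds under $\rho$ to the (suitably normalized) dual action of $\mathbb{T}$ on $B \rtimes_\alpha \mathbb{Z}$, and $\rho$ is injective on $\pi(B) = B$, which gives injectivity of $\rho$.

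The main obstacle is minor: one must carefully track the convention for the unitary implementing $\alpha$ in the crossed product so that the formula $t(\eta) = u^*\eta$ (versus $u\eta$) gives a genuine covariant representation rather than one intertwining $\alpha^{-1}$. Beyond this bookkeeping the proof is routine, since all the hard work has been done in the preceding lemma and in classical Cuntz--Pimsner theory.
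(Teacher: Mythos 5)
Your proof is correct and follows essentially the same route as the paper: reduce via the preceding lemma to $B_\alpha$ and then invoke Pimsner's identification \cite[Example 3, p.~193]{Pimsner} of $\cO_{B_\alpha}$ with $B\rtimes_\alpha\mathbb{Z}$, which is all the paper does. Your added explicit covariant representation ($t(\eta)=u^*\eta$, covariance via $\varphi_{B_\alpha}(x)=\theta_{\alpha(x),1}$) and the gauge-invariant uniqueness argument are accurate, just a fleshed-out version of the cited example.
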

\begin{proof}
This follows from Pimsner's work \cite[Example 3, p. 193]{Pimsner}.
\end{proof}

We can make this crossed product a bit more explicit. First, recall that if we have an automorphism $\alpha$ on a direct sum $B_{1}\oplus B_2$, preserving the summands, then the corresponding crossed product splits as a direct sum as well, i.e. $(B_1\oplus B_2)\rtimes \mathbb{Z} \simeq (B_1\rtimes \mathbb{Z})\oplus (B_2\rtimes \mathbb{Z})$. Second,
automorphisms of direct sums of matrix algebras are of a very special form. We can first collect all the matrix algebras of the same size and such a subalgebra has to be preserved by any automorphism. In this case we are dealing with an algebra of the form $M_n(\mathbb{C}) \otimes \mathbb{C}^{k}$ and then any automorphism comes from a permutation of the set $\{1,\dots,k\}$ followed by \emph{inner} automorphisms on individual matrix algebras. Because crossed products are insensitive to inner perturbations (\cite[II.10.3.17]{MR2188261}), we may assume that we are just dealing with a permutation automorphism. Moreover any permutation decomposes as a disjoint sum of cycles so we can treat those separately. To sum up, any crossed product $B\rtimes \mathbb{Z}$ will decompose as a direct sum of crossed products of the form $(M_n(\mathbb{C})\otimes \mathbb{C}^k) \rtimes \mathbb{Z}$, where $\mathbb{Z}$ acts by a cycle on $\{1,\dots,k\}$ and as identity on $M_n(\mathbb{C})$. Since it acts as identity on the matrix algebra, the resulting crossed product is isomorphic to $M_n(\mathbb{C}) \otimes (\mathbb{C}^k \rtimes \mathbb{Z}) \cong M_n(\mathbb{C}) \otimes M_k(\mathbb{C}) \otimes C(\mathbb T)$ (\cite[Section VIII.3, p. 230]{MR1402012}. 

Again, there is a natural Exel system we may associate to the quantum graph $\cG_\alpha$. Consider $(B,\alpha,\alpha^{-1})$, where $\alpha^{-1}$ plays the role of the transfer operator. As in the example of the complete quantum graph, the Exel crossed product for this system, $B\rtimes_{\alpha,\alpha^{-1}} \mathbb{N}$, is isomorphic to the Cuntz--Pimsner algebra $B\rtimes_\alpha \mathbb{Z}$ for the quantum edge correspondence $E_\alpha.$





\bibliographystyle{amsalpha}
\bibliography{references}

\end{document}